\newtheorem{tm}{Theorem}[section]
\newtheorem{lm}[tm]{Lemma}
\newtheorem{co}[tm]{Corollary}
\newtheorem{df}[tm]{Definition}
\newtheorem{pr}[tm]{Proposition}
\newtheorem{ex}[tm]{Example}
\newcommand{\subscripts}[3]{%
  \@mathmeasure\z@\displaystyle{#2}%
  \global\setbox\@ne\vbox to\ht\z@{}\dp\@ne\dp\z@
  \setbox\tw@\box\@ne
  \@mathmeasure4\displaystyle{\copy\tw@_{#1}}%
  \@mathmeasure6\displaystyle{{#2}_{#3}}%
  \dimen@-\wd6 \advance\dimen@\wd4 \advance\dimen@\wd\z@
  \hbox to\dimen@{}\mathop{\kern-\dimen@\box4\box6}%
}
\newcommand{\A}{\mathcal{A}}
\newcommand{\g}{\mathfrak{g}}
\newcommand{\nn}{\nonumber}
\newcommand{\LL}{\mathcal{L}}
\newcommand{\h}{\mathrm{H}}
\newcommand{\e}{\mathrm{e}}
\newcommand{\ve}{\varepsilon}
\newcommand{\dis}{\displaystyle}
\newcommand{\Hom}{\mathrm{Hom}}
\newcommand{\del}{\partial}
\newcommand{\ol}{\overline}
\newcommand{\va}{\mathfrak{a}}
\newcommand{\Span}{\mathrm{Span}_{\mathbb{R}}}
\newcommand{\la}{\langle}
\newcommand{\asy}{\rho_{\mathbb{R}}(\gamma_p)}
\newcommand{\La}{\langle\!\langle}
\newcommand{\ra}{\rangle}
\newcommand{\Ra}{\rangle\!\rangle}
\begin{document}
\title[Edgeworth expansions on covering graphs]{Edgeworth expansions 
for centered random walks on covering graphs  of
polynomial volume growth}
\author[Ryuya Namba]{Ryuya Namba}
\date{\today}
\address{Department of Mathematical Sciences,
College of Science and Engineering,
Ritsumeikan University, 1-1-1, Noji-higashi, Kusatsu, 525-8577, Japan}
\email{{\tt{rnamba@fc.ritsumei.ac.jp}}}
\subjclass[2010]{60F05, 60J10, 22E25}
\keywords{Berry-Esseen type bound; Edgeworth expansion; covering graph;
centered random walk}

\begin{abstract}
Edgeworth expansions for  random walks on covering graphs 
with groups of polynomial volume growths are obtained
under a few natural assumptions. The coefficients appearing 
in this expansion depends on not only geometric features of the underlying graphs but also
the modified harmonic embedding of the graph into a certain nilpotent Lie group. 
Moreover, we apply the rate of convergence in
Trotter's approximation theorem  to establish the
Berry--Esseen type bound for the random walks. 
\end{abstract}

\maketitle 
%
%


\section{{\bf Introduction and main results}}
It is well-known that {\it central limit theorems} (CLTs, in short) are 
fundamental topics among a lot of branches of mathematics including e.g.,
probability theory, graph theory, geometric analysis and harmonic analysis. 
They have been studied under various settings intensively and extensively. 
In particular,  long time asymptotics of random walks on infinite graphs 
are affected by not only properties of random walks themselves but also
geometric features of underlying spaces such as periodicities and volume growths. 

We mainly deal with {\it covering graphs} as our discrete models of interest in the 
present paper. 
Given a finitely generated group $\Gamma$, 
an infinite connected graph $X=(V, E)$ is called a {\it $\Gamma$-covering graph}
if $X$ admits free $\Gamma$-actions on itself and the quotient base graph 
$X_0:=\Gamma \backslash X=(V_0, E_0)$ is a finite graph. 
In particular, $X$ is called a $\Gamma$-{\it crystal lattice} if 
$\Gamma$ is abelian, which includes classical periodic lattices such as 
square lattices, triangular lattices and hexagonal lattices. 
We here emphasize that the study of covering graphs is closely related to 
e.g., crystallography, tilling theory and material sciences. See \cite{S}
for more details with extensive references therein.

There exist several papers in which some long time asymptotics 
of random walks on crystal lattices are discussed. 
In general, it is not possible to apply the usual techniques to 
obtain such asymptotics directly to the case where the underlying space is a graph, 
since the notion of  scale changes on graphs is not defined properly. 
Kotani and Sunada studied long time
asymptotics of random walks on a crystal lattice $X$ by placing a special emphasis on the geometric feature of $X$ in e.g. \cite{KS00-CMP, KS00-TAMS}. 
Especially, they introduced the notion of harmonic realization of $X$, 
which is a discrete harmonic map  $\Phi_0 : X \to \Gamma \otimes \mathbb{R}=\mathbb{R}^d$  
to characterize an equilibrium configuration of $X$.
Among their studies, they developed a new mathematical branch
called the {\it discrete geometric analysis} and it has been effectively applied 
to capture some asymptotics of random walks on $X$. 
See e.g., \cite{KS06, IKK}. 

On the other hand, there exist several studies on long time asymptotics of 
symmetric or centered random walks on  finitely generated
groups. We know  that 
the notion of volume growth of groups
plays a crucial role
in obtaining such results.
On the other hand, it is not possible to characterize
a finitely generated group itself in terms of its volume growth. 
However, there is a celebrated theorem of Gromov (cf.\,\cite{Gromov})
 on a characterization of groups of polynomial volume growth, which asserts that 
such a group is {\it virtually nilpotent}.  
Namely, there exists a nilpotent subgroup of $\Gamma$ with finite index.
Hence, a number of results on long time asymptotics of 
random walks on nilpotent groups have been known. 
We refer to 
\cite{Raugi, Pap1, A1} for CLTs on nilpotent Lie groups and 
\cite{Bre, Hough} for 
local CLTs on 
nilpotent Lie groups. 

Afterwards, a hybrid model between crystal lattices and groups of polynomial volume growths is 
introduced. Namely, we consider a $\Gamma$-covering graph $X=(V, E)$ of a finite graph $X_0=(V_0, E_0)$
whose covering transformation group $\Gamma$ is finitely generated
and of polynomial volume growth. 
By virtue of 
the Gromov theorem, we may assume that $\Gamma$ is nilpotent 
without loss of generality. So we simply call $X$ a {\it $\Gamma$-nilpotent covering graph}
in what follows.  
It is known that there exists a connected and simply connected 
nilpotent Lie group $(G, \bullet)$ in which $\Gamma$ is a cocompact lattice
(see \cite{Malcev}). 
Therefore, we come to define the notion of harmonic realizations $\Phi_0 : X \to G$
in the same way as that of crystal lattices $\Phi_0 : X \to \mathbb{R}^d$.  
We note that the harmonicity of $\Phi_0 : X \to G$ determines the most
natural configurations of $X$ corresponding to components 
of $G/[G, G]$, the abelianization of $G$. 
See \eqref{modified harmonic} for its precise definition. 
Ishiwata studied 
long time asymptotics of symmetric random walks on $X$ and 
obtained a CLT for them. Moreover, a rate of convergence of 
the $n$-step transition probability $p(n, x, y)$ to the heat kernel on $G$ 
was obtained in a geometric point of view. 
See \cite{Ishiwata, Ishiwata2}. 
Ishiwata, Kawabi and Namba obtained in their successive papers \cite{IKN, IKN2}
that functional CLTs hold for even non-symmetric random walks on $X$
under some natural conditions, which give the process-level convergences 
of such random walks to certain diffusion processes on $G$. 

For an edge $e \in E$, 
we denote by $o(e)$, $t(e)$ and $\ol{e}$ 
the origin, the terminus and the inverse edge of $e$, respectively. 
We put $E_x=\{e \in E \, | \, o(e)=x\}$ for $x \in V.$
Consider a transition probability $p : E \to [0, 1]$ satisfying
$\sum_{e \in E_x}p(e)=1$ for $x \in V$ and $p(e)+p(\ol{e})>0$ for $e \in E$.
The transition probability $p$ is supposed to be invariant under $\Gamma$-actions, that is, 
$p(\gamma e)=p(e)$ for $\gamma \in \Gamma$ and $e \in E$. 
Then,  $p$ yields 
an $X$-valued random walk
$(\Omega_x(X), \mathbb{P}_x, \{w_n\}_{n=0}^\infty)$ starting at $x \in V$, 
where $\mathbb{P}_x$ is 
the probability measure on $\Omega_x(X)$ induced from  $p$
and $w_n(c):=o( e_{n+1})$ for $n \in \mathbb{N} \cup\{0\}$
and $c=(e_1, e_2, \dots, e_n, \dots) \in \Omega_x(X)$. 
Let $\pi : X \to X_0$ be the covering map. 
Then we can consider the transition probability $p$ on $X_0$ defined by 
 $p\big(\pi(e)\big)=p(e)$ for $e \in E$. Namely, $p$ on $X$ is the lift of $p$ on $X_0$.  
Then $p$ on $X_0$ also induces 
 an $X_0$-valued random walk
$(\Omega_{\pi(x)}(X_0), \mathbb{P}_{\pi(x)}, \{w_n=\pi(w_n)\}_{n=0}^\infty)$ starting at $\pi(x) \in V_0$.
In what follows, we assume that the random walk $\{w_n\}_{n=0}^\infty$ on $X_0$
associated with $p$ is {\it irreducible}. Then,  
there exists the unique {\it normalized invariant measure} $m : V_0 \to (0, 1]$ which satisfies
$$
\sum_{x \in V_0}m(x)=1, \qquad \sum_{e \in (E_0)_x}p(\ol{e})m\big(t(e)\big), \qquad x \in V_0,
$$
by applying the Perron--Frobenius theorem. 

Suppose that the nilpotent Lie algebra $\g$ of $G$ 
is decomposed as $\g=\g^{(1)} \oplus \g^{(2)} \oplus \cdots \oplus \g^{(r)}$
for some $r \in \mathbb{N}$ and satisfy some nice conditions (see {\bf (A1)}). 
Denote by $C_\infty(G)$ (resp. $C_\infty(X)$) 
the set of all continuous functions on $G$ (resp. on $X$) vanishing at infinity. 
We also denote by $C_c^\infty(G)$ the set of 
 all compactly supported smooth functions on $G$. 
We define the {\it transition operator} $\LL : C_\infty(X) \to C_\infty(X)$ 
and the {\it approximation operator} $\mathcal{P}^H_\ve : C_\infty(G) \to C_\infty(X), \,\, \ve>0$, by
 $$
 \LL f(x):=\sum_{e \in E_{x}}p(e) f\big(t(e)\big), \qquad 
 \mathcal{P}^H_\ve f(x):=f\Big(\tau_\ve \big(\Phi_0(x)\big)\Big), \qquad 
 x \in X, 
 $$
respectively, where $p$ is a $\Gamma$-invariant non-symmetric transition
probability on $X$, the map $\Phi_0 : X \to G$ 
is the modified harmonic realization of $X$ defined by \eqref{modified harmonic}, 
and $\tau_{n^{-1/2}} : G \to G$ is the dilation of order $n^{-1/2}$. 
See Section 2 for its definition. 

Let $\{\mathfrak{a}_1, \mathfrak{a}_2, \dots, \mathfrak{a}_{d_1}\}$
and $\{\mathfrak{a}_{d_1+1},  \dots, \mathfrak{a}_{d_1+d_2}\}$ 
be fixed bases in $\g^{(1)}$ and $\g^{(2)}$, respectively, 
where $d_k, \, k=1, 2$, is the dimension of $\g^{(k)}$. 
We put 
$$
\sigma_i(\Phi_0):=\sum_{e \in E_0}p(e)m\big(o(e)\big)
\log\Big(\Phi_0(o(\widetilde{e}))^{-1} 
\bullet \Phi_0(t(\widetilde{e}))\Big)\Big|_{\mathfrak{a}_i}, 
\qquad i=1, 2, \dots, d_1,
$$
and 
$$
\beta_i(\Phi_0):=\sum_{e \in E_0}p(e)m\big(o(e)\big)
\log\Big(\Phi_0(o(\widetilde{e}))^{-1} 
\bullet \Phi_0(t(\widetilde{e}))\Big)\Big|_{\mathfrak{a}_i},
\qquad i=d_1+1, \dots, d_1+d_2,
$$
where $\widetilde{e} \in E$ is a lift of $e \in E_0$ to $X$
and $\log : G \to \g$ is the inverse map of $\exp$.

The following is a simple version of the CLT for a non-symmetric random walk
$\{w_n\}_{n=0}^\infty$ 
on the nilpotent covering graph $X$ under the so-called {\it centered condition}, 
whose precise meaning will be given later soon. 

\begin{pr}[cf. {\cite[Theorem 2.1]{IKN}}]
\label{CLT}
Assume the centered condition. Then, for any function
$f \in C_\infty(G)$  and any $t>0$, we have 
\begin{equation}\label{IKN-CLT}
\|\mathcal{L}^{[nt]} \mathcal{P}^H_{n^{-1/2}}f
- \mathcal{P}^H_{n^{-1/2}}\e^{t\mathcal{A}(\Phi_0)}f\|_\infty \to 0 \quad (n \to \infty),  
\end{equation}
where $(\e^{t\mathcal{A}(\Phi_0)})_{t \ge 0}$ is a $C_0$-semigroup whose infinitesimal generator 
$\A(\Phi_0)$ is a second order sub-elliptic operator on $C_c^\infty(G)$ given by
\begin{equation}\label{infinitesimal generator}
\A(\Phi_0)=\frac{1}{2}\sum_{i, j=1}^{d_1}\sigma_i(\Phi_0) \sigma_j(\Phi_0) 
\mathfrak{a}_i \mathfrak{a}_j + 
\sum_{i=d_1+1}^{d_1+d_2}\beta_i(\Phi_0)\mathfrak{a}_i.
\end{equation}
\end{pr}
This proposition means that the sequence of 
discrete semigroups 
generated by the $G$-valued random walk $\{\Phi_0(w_n)\}_{n=0}^\infty$  
 converges 
to a  heat semigroup $(\e^{t\A(\Phi_0)})_{t \ge 0}$ on $G$ under the usual CLT-scaling. 
The point of interest is that the symmetry of the random walk $\{w_n\}_{n=0}^\infty$ implies 
$\beta_i(\Phi_0)=0$ for $i=d_1+1, \dots, d_1+d_2$, whereas the converse does not hold in general. 
Therefore, we observe that each drift coefficient $\beta_i(\Phi_0)$ is 
affected by the non-symmetry of the given random walk.

As a further interesting problem, it is natural to ask whether the precise rate of convergence 
of \eqref{IKN-CLT} is obtained or not. Namely, we would like to find out a function 
$\Psi=\Psi(n)$ satisfying 
\begin{equation}\label{BE-intro}
\|\mathcal{L}^{[nt]} \mathcal{P}^H_{n^{-1/2}}f 
- \mathcal{P}^H_{n^{-1/2}}\e^{t\mathcal{A}(\Phi_0)}f\|_\infty 
\le C \Psi(n), \qquad  n \in \mathbb{N},
\end{equation}
for some positive constant $C>0$ independent of $n$ and any function $f : G \to \mathbb{R}$ 
with some  regularity.  
Such a bound is usually called the {\it Berry--Esseen type bound}
for the discrete semigroup $\mathcal{L}^{[nt]}\mathcal{P}^H_{n^{-1/2}}f$ 
associated with the transition probability $p$. 
A more difficult problem than \eqref{BE-intro} is known as the 
{\it Edgeworth expansions}, which roughly means the asymptotic expansions
on the left-hand side of \eqref{BE-intro}. 
We refer to \cite{GH} for Edgeworth expansions in the CLT for random walks on 
$\mathbb{R}^d$ under some moment conditions, and
\cite{Gotze} for an extension of the Edgeworth expansions to the infinite-dimensional cases. 
Bentkus and Pap \cite{BP} extended 
Raugi's early result of CLTs on nilpotent Lie groups (cf.\,\cite[Theorem 4.5]{Raugi}). 
Namely, they obtained not only 
the Berry--Esseen type bounds but also 
the Edgeworth expansions on nilpotent Lie groups. 
However, their expansions 
are valid only up to the second order. Afterwards, Pap showed that the Edgeworth 
expansions above are also valid up to arbitrary order in \cite{Pap2}, which basically motivates 
our study.

The purpose of the present paper is to obtain the Edgeworth expansions
of a certain class of {\it non-symmetric} random walks $\{w_n\}_{n=0}^\infty$ on $X$
up to arbitrary order
as a refinement of  \eqref{IKN-CLT}, by noting geometric features of $X$ 
including modified harmonicity of realization maps. 
We note that the underlying random walks are supposed to be 
generated by independently and identically distributed (i.i.d., in short) random variables 
in both papers \cite{BP} and \cite{Pap2}. 
On the other hand, the random walks on a nilpotent covering graph $X$ are not always 
generated by i.i.d.\,random variables since $X$ has inhomogeneous local structures.
Therefore, we need careful examinations of techniques used in the present paper. 

Let $\Phi_0 : X \to G$ denote the modified harmonic realization of $X$. 
In the following, we impose two 
technical but quite natural assumptions.

\vspace{2mm}
\noindent
{\bf (A1)}: The nilpotent Lie group $G$ is {\it stratified of step $r \in \mathbb{N}$.} Namely, 
the corresponding Lie algebra 
$(\g, [\cdot, \cdot])$ admits a direct sum decomposition 
$\mathfrak{g}=\mathfrak{g}^{(1)} \oplus \g^{(2)} 
\oplus \dots \oplus \mathfrak{g}^{(r)}$ satisfying that
$
[\mathfrak{g}^{(k)}, \mathfrak{g}^{(\ell)}] \subset \g^{(k+\ell)}$ if $k+\ell \leq r$
and the subspace $\mathfrak{g}^{(1)}$ generates the whole $\mathfrak{g}$ 
in the sense that $\g^{(k)}=[\g^{(1)}, \g^{(k-1)}]$ for $k=2, 3, \dots, r$.

\vspace{2mm}
\noindent
{\bf (A2)}: Random walks on $X$ is {\it centered}. In other words, it holds that the law of large numbers
$$
\lim_{n \to \infty}\frac{1}{n}\log(\Phi_0(w_n))|_{\g^{(1)}}=0, \quad \text{a.s.},
$$
where $\log(\Phi_0(w_n))|_{\g^{(1)}}$ is the projection of $\log(\Phi_0(w_n))$ onto $\g^{(1)}$-component. 

\vspace{2mm}
We  consider the class of 
stratified nilpotent Lie groups, which is more treatable than that of general ones.   
Moreover, this class should be sufficiently rich to reflect the 
geometric structures of general nilpotent Lie groups. 
This is why we may impose {\bf (A1)} in the present paper. 
On the other hand, Proposition \ref{IKN-CLT} also holds without imposing {\bf (A2)}
 (cf.\,\cite[Theorem 2.1]{IKN}). However, we need to consider
a slight extension of the function space to handle 
a certain average of the random walks when {\bf (A2)} is not assumed. 
Therefore, {\bf (A2)} is just imposed to avoid the cumbersomeness of arguments. 
We emphasize that the non-symmetry of the random walks readily implies {\bf (A2)}, 
whereas there exists a non-symmetric random walk on $X$ 
which satisfy {\bf (A2)}. See Section 3 for more details. 
We also emphasize that there is a case where $\beta_i(\Phi_0) \neq 0$ even under {\bf (A2)}.

Then the main result in the present paper is stated as follows:
\begin{tm}[Edgeworth expansion]
\label{main}
Assume {\bf (A1)} and {\bf (A2)}. 
Let $N \ge 2$ and $f \in C_b^{3(N+1)r}(G)$, which is defined by \eqref{C_b}. 
Then we have
\begin{equation}\label{Edgeworth}
\mathcal{L}^{[nt]} \mathcal{P}^H_{n^{-1/2}}f(x)
=\mathcal{P}^H_{n^{-1/2}}\e^{t\mathcal{A}(\Phi_0)}f(x)+\sum_{j=1}^{N-1}\frac{\xi_j}{n^{j/2}}
+O(n^{-N/2}), \qquad x \in V,
\end{equation}
as $n \to \infty$, where the coefficient $\xi_j$, $j=1, 2, \dots, N-1$,
depends on $x$, $G$, $f$, $p$, $\Phi_0$ and $t$. 
\end{tm}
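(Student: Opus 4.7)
The plan is to refine the one-step action of the transition operator into a formal asymptotic series in powers of $n^{-1/2}$, and then iterate $[nt]$ times by a quantitative Trotter--Kato argument while tracking the accumulated error. By \textbf{(A1)} each dilation $\tau_\ve$ is a Lie-group automorphism of $G$, so writing $g_e(x):=\Phi_0(x)^{-1}\bullet\Phi_0(t(e))$ gives
\[
\mathcal{P}^H_{n^{-1/2}}f\bigl(t(e)\bigr)=f\bigl(\tau_{n^{-1/2}}(\Phi_0(x))\bullet\tau_{n^{-1/2}}(g_e(x))\bigr).
\]
I would Taylor-expand $f$ at $\tau_{n^{-1/2}}(\Phi_0(x))$ along a basis of left-invariant vector fields, noting that the $\mathfrak{g}^{(k)}$-component of $\tau_{n^{-1/2}}(\log g_e(x))$ has scale $n^{-k/2}$, to obtain
\[
\mathcal{L}\mathcal{P}^H_{n^{-1/2}}f(x)=\mathcal{P}^H_{n^{-1/2}}f(x)+\sum_{j=1}^{K}n^{-j/2}\mathcal{P}^H_{n^{-1/2}}\bigl(\mathcal{A}_j^{(x)}f\bigr)(x)+R_{K,n}(x),
\]
with remainder $R_{K,n}(x)=O\bigl(n^{-(K+1)/2}\|f\|_{C_b^{K+1}(G)}\bigr)$ uniformly in $x$. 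Each $\mathcal{A}_j^{(x)}$ is a left-invariant differential operator of order at most $j$ whose coefficients are polynomial in $\{p(e)\}_{e\in E_x}$ and in the lifts $g_e(x)$. Modified harmonicity of $\Phi_0$ together with the centered condition \textbf{(A2)} force $\mathcal{A}_1^{(x)}$ to average to zero against the invariant measure $m$, while $\mathcal{A}_2^{(x)}$ averages to the operator $\mathcal{A}$ of Proposition \ref{CLT}.

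\textbf{Removing vertex dependence via Poisson correctors.} Because the $\mathcal{A}_j^{(x)}$ genuinely depend on $x\in V_0$, the one-step expansion cannot be exponentiated directly. I would split $\mathcal{L}=\Pi+\mathcal{L}_0$, where $\Pi$ is the rank-one projection associated with $m$, and exploit the spectral gap of $\mathcal{L}_0$ (from irreducibility and Perron--Frobenius) to solve, at each order $j$, a Poisson equation $(I-\mathcal{L}_0)u_j=\mathcal{A}_j^{(\cdot)}-\mathcal{A}_j^{\mathrm{eff}}$ on $V_0$. Absorbing the correctors $u_j$ into the test function replaces each $\mathcal{A}_j^{(x)}$ by its $m$-average $\mathcal{A}_j^{\mathrm{eff}}$, up to telescoping boundary terms that are uniformly bounded in $n$ and hence negligible at order $n^{-N/2}$. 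This is precisely where the non-symmetry of $p$ is felt: first-order correctors feed back into second order and reproduce the drift coefficients $\beta_i(\Phi_0)$ appearing in $\mathcal{A}$.

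\textbf{Iteration via quantitative Trotter--Kato.} With vertex-independent effective operators in hand, I iterate
\[
\Bigl(\mathrm{Id}+n^{-1}\mathcal{A}+\sum_{j\ge 3}n^{-j/2}\mathcal{A}_j^{\mathrm{eff}}\Bigr)^{[nt]}
\]
via a combinatorial product expansion, grouping contributions by their total order in $n^{-1/2}$ and using Duhamel-type identities to rewrite each subleading term as a composition of $e^{t\mathcal{A}}$ with the higher $\mathcal{A}_j^{\mathrm{eff}}$. This produces the coefficients $\xi_j$ in \eqref{Edgeworth}, which are linear in $f$ and polynomial in $t$. A quantitative version of the Trotter--Kato theorem, applied with operator-norm estimates on the scale of spaces $C_b^{k}(G)$, controls the accumulated error; the regularity threshold $3(N+1)r$ arises because each order of $n^{-1/2}$ consumes up to three extra derivatives in the Taylor and BCH expansions, while the stratification depth $r$ bounds the maximal differential order of the $\mathcal{A}_j^{\mathrm{eff}}$.

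\textbf{Main obstacle.} The hardest part, in my view, is Step 2: cleanly eliminating the base-point dependence of the $\mathcal{A}_j^{(x)}$ while maintaining uniform control on an entire hierarchy of Poisson correctors in norms compatible with the iteration in Step 3. In the i.i.d.\,setting of \cite{BP,Pap2} this difficulty is absent, since the one-step expansion already has constant coefficients; here the inhomogeneous local structure of $X_0$ forces a tower of Poisson equations whose interaction with the non-commutative BCH expansion on $G$ is a delicate combinatorial exercise. Verifying that the corrected operators reproduce the announced $\xi_j$ and that the error bound is uniform in $x\in V$ is the principal technical challenge.
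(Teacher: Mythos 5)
Your proposal takes a genuinely different route from the paper.  The paper recasts $\mathcal{L}^{[nt]}\mathcal{P}^H_{n^{-1/2}}f(x)$ probabilistically as an integral against the convolution $\tau_{n^{-1/2}}(\mu^{(1)}\ast\cdots\ast\mu^{(n)})$ and then uses the Lindeberg-type swap \eqref{Proof; measure-decomp}, which replaces one increment of the random walk at a time by a Gaussian increment, together with a truncation at $|y|\ge 1$ and repeated use of Lemma \ref{estimate of f} and \eqref{truncation} to control weighted moments.  In this scheme the vertex inhomogeneity is handled at the level of moments ($m^I_{\mu^{(k)}}=m^I_{\Phi_0}$ for all $k$, and the ergodic theorems of Section 4 for the lower-order computations), not through any Poisson-equation corrector on $V_0$.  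Crucially, the paper's own Section 5.2 carries out exactly the one-step Taylor-expansion-and-iterate argument you propose, and then explicitly disavows it as a proof: ``the argument in this subsection does not imply the rigorous proof of Theorem \ref{main}.  Indeed, several technical difficulties appear when we deal with the remainder terms.''  So the heuristic half of your plan is in the paper, labelled as a heuristic.

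The two steps you add to make it rigorous both have real gaps.  First, the Poisson corrector $u_j$ solving $(I-\mathcal{L}_0)u_j=\mathcal{A}_j^{(\cdot)}-\mathcal{A}_j^{\mathrm{eff}}$ is operator-valued: $u_j(x)$ is a left-invariant differential operator on $G$ for each $x\in V_0$.  ``Absorbing'' it into the test function replaces $f$ by $f+n^{-j/2}u_j(\cdot)f$, but this new object is a \emph{vertex-dependent} family of functions, so you are back to a one-step expansion with $x$-dependent coefficients; the recursion never closes, and you have not shown that the hierarchy of corrected remainders stays uniformly small after $[nt]$ iterations.  Second, a quantitative Trotter--Kato bound (the paper's Theorem \ref{Trotter-refine}) yields a convergence \emph{rate} for $\|T_n^{k(n)}P_nf-P_n\mathcal{T}_tf\|$, not an asymptotic \emph{expansion}; the paper itself uses it only in Section 7, and only to recover the $O(n^{-1/2})$ Berry--Esseen bound (Proposition \ref{BE-refine}) with the stronger hypothesis $f\in C^\infty_c(G)$.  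Extracting the terms $\xi_j/n^{j/2}$ for $j\ge 2$ from such a rate bound would require a self-improving Duhamel iteration in which each correction term is itself estimated to one further order; nothing in the Trotter--Kato machinery gives this for free, and your ``combinatorial product expansion'' of $(\mathrm{Id}+n^{-1}\mathcal{A}+\cdots)^{[nt]}$ is a formal series in unbounded, non-commuting operators whose remainder you do not control.  The paper's measure-swap, by contrast, produces at each stage a concrete integral $\mathcal{I}_1$ or a truncated Taylor remainder whose size is bounded directly by truncated moments, which is exactly what makes the $O(n^{-N/2})$ error provable; that mechanism has no counterpart in your sketch.
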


We also obtain the precise representations of 
the coefficients $\{\xi_j\}_{j=1}^{N-1}$ in \eqref{Edgeworth}
by applying the multidimensional version of the 
so-called Euler--Maclaurin formula (see Proposition \ref{Edgeworth-coef}). 
For example, the first coefficient $\xi_1$ is written as 
$$
\xi_1=\sum_{d(I)=3} m^I_{\Phi_0}\int_{\substack{t+s=1 \\ t, s \ge 0}}\iint_{G^2}
\widehat{S}_{(2)}^{I}f(x \bullet x_1 \bullet \bm{1}_G \bullet x_2) \, \nu_t^{\Phi_0}(dx_1)\nu_s^{\Phi_0}(dx_2)\,dt\,ds,
$$
where $\sum_{d(I)=3}$ means the sum which runs over all multi-index of 
weighted length 3, 
the operator $\widehat{S}_{(2)}^{I}$ is the symmetrization of the 
differential operator of the form $X^I$ with respect to the second variable, and 
$m^I_{\Phi_0}$ is a ``moment'' of the $G$-valued random walk $\{\Phi_0(w_n)\}_{n=1}^\infty$ of order $I$. 
Moreover, $(\nu^{\Phi_0}_t)_{t \ge 0}$ is the centered Gaussian semigroup on $G$ 
associated with the infinitesimal generator $\A(\Phi_0)$ defined by \eqref{infinitesimal generator}, 
that is, $(\nu^{\Phi_0}_t)_{t \ge 0}$ is the unique convolution semigroup on $G$ satisfying
$$
\A(\Phi_0)f(x)=\lim_{t \searrow 0}\frac{1}{t}\int_G 
\big(f(x \bullet y) - f(x)\big) \, \nu_t^{\Phi_0}(dy), \qquad f \in \mathrm{Dom}(\A), \, x \in G. 
$$
See Section 2.2 for  general properties of Gaussian semigroups on $G$.
 
We emphasize that the error term $O(n^{-N/2})$ heavily depends on
the sup-norms of the derivatives of the function $f$ of order up to $3(N+1)r$, as is seen in the proof of Theorem \ref{main}.  
What is remarkable is that the Edgeworth expansion holds even when 
the underlying process is not always symmetric and the underlying space has 
non-commutative and inhomogeneous structures. 
As far as we know, there have not been 
such results. Hence, we believe that this result gives a new insight to 
the study of long time asymptotics of random walks.

The rest of the present paper is organized as follows:
In Section 2, we review basics of stratified nilpotent Lie groups including 
several useful formulas such as the Campbell--Baker--Hausdorff formula and 
the stratified Taylor formula. 
Moments of Gaussian measures on nilpotent Lie groups are 
also discussed. 
The basics of discrete geometric analysis on graphs 
due to Kotani and Sunada are presented in Section 3. 
We also introduce the notion of modified harmonic realizations, which 
play a crucial role in the proof of Theorem \ref{main}. 
In Section 4, an ergodic theorem for iterated transition operators is established. 
Before giving a proof of Theorem \ref{main}, we give a rough observation for the validity of
the Edgeworth expansion 
by using explicit calculations of moments of random walks in Section 5. 
We show Theorem \ref{main} and give the representations of the coefficients 
appearing in the expansions (Proposition \ref{Edgeworth-coef}) in Section 6. 
By a simple application of the Edgeworth expansion, we know that 
the Berry--Esseen type bounds are also obtained. In Section 7, we see that 
the Berry--Esseen type bound can be obtained 
by employing an extension of celebrated Trotter's approximation theorem (Theorem \ref{Trotter-refine}). 
Moreover, we see that the Berry--Esseen type bound 
is also established when the realization map of $X$ is not always supposed to be modified harmonic (Theorem \ref{BE-refine}). 
For this sake,  an estimation of the so-called ``corrector,'' which measures the difference between 
a realization and the modified harmonic one, plays an important role in the proof.

\vspace{2mm}
\noindent
{\bf Notations.} Several notations frequently used in the present paper are given below.
\begin{itemize}
\item
For $x \in \mathbb{R}$, the symbol $[x]$ stands for the 
greatest integer less than or equal to $x$. 
\item
The cardinality of a set $A$ is denoted by $|A|$. 
\item
We denote by $C$ a positive constant that may change from line to line.
\item
One writes $f(n)=O(g(n)) \, (n \to \infty)$ if and only if 
there exists a positive real number $M$ and  $n_0 \in \mathbb{N}$ such that  
$|f(n)| \leq Mg(n)$ for $n \ge n_0$. 
\end{itemize}
\section{{\bf Basics of nilpotent Lie groups}}

\subsection{Nilpotent Lie groups}
We give basic notions on nilpotent Lie groups, our continuous models
of interest, and present fundamental calculus on them. 
In general, nilpotent Lie groups are classified by the structures of 
corresponding nilpotent Lie algebras. 
It is well-known that there is a class of nilpotent Lie algebras which admits
finite direct sum decompositions with some suitable Lie brackets.
See {\bf (A1)} in the previous section. 
The class is 
said to be stratified and contains a lot of important examples of 
nilpotent Lie algebras such as the Heisenberg ones. 

\begin{ex}\normalfont
Let $m \in \mathbb{N}$. 
The {\it Heisenberg group} 
$
\mathbb{H}^{2m+1}(\mathbb{R}):
=(\mathbb{R}^{2m+1}, \bullet)$
defined by
$$
\begin{aligned}
&(x_1, x_2, \dots, x_{2m}, x_{2m+1})\bullet (y_1, y_2, \dots, y_{2m}, y_{2m+1})\\
&=\Big(x_1+y_1, \,x_2+y_2, \,\dots,\, x_{2m}+y_{2m}, \,x_{2m+1}+y_{2m+1}+\sum_{k=1}^{m}x_ky_{k+m}\Big)
\end{aligned}
$$
is the simplest example of stratified nilpotent Lie group of step 2. 
Indeed, the corresponding Lie algebra $\g=(\mathbb{R}^{2m+1}, [\cdot, \cdot])$ spanned by $\{\va_1, \va_2, \dots, \va_{2m}, \va_{2m+1}\}$ with
$$
[\va_i, \va_j]:=\begin{cases}
\va_{2m+1} & \text{if }j=i+m, \, i=1, 2, \dots, m\\
-\va_{2m+1} & \text{if }j=i-m, \, i=m+1, m+2, \dots, 2m\\
0 & \text{otherwise}
\end{cases}.
$$
is decomposed as
$
\g=\g^{(1)} \oplus \g^{(2)}\equiv \Span\{\va_1, \va_2, \dots, \va_{2m}\} \oplus \Span\{\va_{2m+1}\}.
$
\end{ex}

Consider a connected and simple connected 
nilpotent Lie group $(G, \bullet)$ whose Lie algebra $(\g, [\cdot, \cdot])$
is stratified.  
Let $d_k$ be the dimension of $\g^{(k)}$ for $k=1, 2, \dots, r$ and $D:=d_1+d_2+\cdots+d_r$.  
We fix a basis $\{\va_1, \va_2, \dots, \va_D\}$ of $\g$ such that 
$\{\va_{d_1+\cdots+d_{k-1}+1}, \dots, \va_{d_1+\cdots+d_{k-1}+d_k}\}$ is a basis of $\g^{(k)}$ for $k=1, 2, \dots, r$,
where $d_0=0$ in convention. 
For $x \in G$, we introduce the {\it global coordinate of the first kind} given by
$x=(x_1, x_2, \dots, x_D)$ when 
$x=\exp\Big(\sum_{i=1}^D x_i \va_i\Big).$  
It is known that there is a family of diffeomorphisms $(\tau_\ve)_{\ve>0}$ behaving as if it were 
scalar multiplications on $G$. 
Let $x \in G$ be written as 
$x=\exp(\va^{(1)}+\va^{(2)}+\cdots+\va^{(r)}), 
\, \va^{(k)} \in \g^{(k)}, \, k=1, 2, \dots, r$. 
Then the {\it dilation map} $\tau_\ve : G \to G$ is defined by
$$
\tau_\ve(x):=\exp(\ve \va^{(1)}+\ve^2 \va^{(2)}+\cdots+\ve^r \va^{(r)}), \qquad \ve \ge 0. 
$$
We also define a continuous function $|\cdot| : G \to [0, \infty)$  by
$$
|x|:=\sum_{i=1}^D|x_i|^{1/\sigma_i}, \qquad x=(x_1, x_2, \dots, x_D) \in G,
$$
where $\sigma_i=k$ if $i=d_1+\cdots+d_{k-1}+1, 
\dots, d_1+\cdots+d_{k-1}+d_k$ for $k=1, 2, \dots, r$. 
 We easily verify that $|\cdot|$ behaves like a norm on $G$, however, it has the {\it homogeneity} in the sense that
 $|\tau_\ve(x)|=\ve|x|$ for $\ve>0$ and $x \in G$. 
 Therefore, $|\cdot|$ is called a {\it homogeneous norm} on $G$. 
 An element $\va \in \g$ may be extended to left and  right invariant $C^\infty$-vector fields  on $G$, that is,
$$
\begin{aligned}
\va f(x)&=\lim_{t \to 0}\frac{1}{t}\Big(f(x \bullet \exp(t\va)) - f(x)\Big), &\quad 
\widehat{\va} f(x)&=\lim_{t \to 0}\frac{1}{t}\Big(f(\exp(t\va)\bullet x) - f(x)\Big)
\end{aligned}$$
for $f \in C^1(G).$

In order to discuss basic calculus such as 
Taylor's formula on the nilpotent Lie group $G$, 
we now introduce the notion of multi-indices.  
We put $\mathcal{I}:=\{I=(i_1, i_2, \dots, i_D) \in (\mathbb{N} \cup \{0\})^D\}$.  
For $I=(i_1, i_2, \dots, i_D) \in \mathcal{I}$
and $x=(x_1, x_2, \dots, x_{D}) \in G$, 
we put $x^I:=x_{1}^{i_1}x_{2}^{i_2} \cdots x_{D}^{i_D}$. 
Similarly, we put 
$\va^I:=\va_1^{i_1}\va_2^{i_2}\cdots \va_D^{i_D}$
for $\va \in  \g$. We define
$|I|:=i_1+i_2+\cdots+i_D$ and $d(I):=\sigma_{1}i_1+\sigma_2 i_2+\cdots+\sigma_D i_D$ 
for  $I=(i_1, i_2, \dots, i_{D}) \in \mathcal{I}$. 
For $I=(i_1, i_2, \dots, i_D)$ and $J=(j_1, j_2, \dots, j_D)$ in $\mathcal{I}$, 
we put $I+J=(i_1+j_1, i_2+j_2, \dots, i_D+j_D)$. We also denote by $[j]$ 
the multi-index with 1 in the $j$-th component and 0 in the others. 

The {\it Campbell--Baker--Hausdorff formula}, 
which is known to be the most important tool in calculus
on nilpotent Lie groups, is given as  
\begin{equation}\label{CBH}
(x\bullet y)^I=x^I+y^I+\sum_{\substack{d(J)+d(K)=d(I) \\ d(J), d(K) \geq 1}}
C_{JK}x^Jy^K, \qquad x, y \in G.
\end{equation}

For a multi-index $I \in \mathcal{I}$, we define the left and right invariant 
differential operator $S^I$ by
$$
S^I:=\frac{1}{|I|!}\sum_{[j_1]+[j_2]+\cdots+[j_{|I|}]=I}\va_{j_1}\va_{j_2}\cdots \va_{j_{|I|}}, \qquad 
\widehat{S}^I:=\frac{1}{|I|!}\sum_{[j_1]+[j_2]+\cdots+[j_{|I|}]=I}
\widehat{\va}_{j_1}\widehat{\va}_{j_2}\cdots \widehat{\va}_{j_{|I|}},
$$
which are regarded as the symmetrizations of 
$\mathfrak{a}^I$ and $\widehat{\mathfrak{a}}^I$, respectively.

We are now ready for stating left stratified Taylor's formula on $G$,
which plays a key role in proving main results (see \cite[Section 1]{FS} 
or \cite[Section 20]{BLU} for details). We note that right stratified Taylor's formula on $G$ is also
stated in the same way as the left one. 

\begin{lm}\label{Taylor}
Let $N \in \mathbb{N} \cup\{0\}$, $f \in C^{N}(G)$ and $x \in G$.
Then we have
$$
\begin{aligned}
f(x \bullet y)=\sum_{d(I) \le N}S^I f(x) y^I + R^f_{N+1}(x, y), \qquad y \in G,
\end{aligned}
$$
where the remainder term $R^f_{N+1}(x, y)$ satisfies 
\begin{equation}\label{Taylor remainder}
|R^f_{N+1}(x, y)| \le 
C|y|^{N+1}\sup\Big\{ |\mathfrak{a}^I f(x \bullet z)| \, : \, d(I)=N+1, \, |z| \le b^{N+1}|y|\Big\}
\end{equation}
for some positive constants $C, \, b>0$ depending only on $G$. 
\end{lm}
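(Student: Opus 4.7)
The plan is to reduce to scalar Taylor expansion along a one-parameter subgroup and then reorganise the resulting sum by the weighted length $d(I)$. Using first-kind exponential coordinates, write $y = \exp(\mathbf{v})$ with $\mathbf{v} := \sum_{i=1}^D y_i \va_i \in \g$, and form the curve $\gamma(t) := \exp(t\mathbf{v})$ joining the identity to $y$. Since $\gamma$ is a one-parameter subgroup, its velocity agrees with the left-invariant vector field which I denote by the same symbol $\mathbf{v}$; thus $\varphi(t) := f(x \bullet \gamma(t))$ satisfies $\varphi^{(k)}(t) = \mathbf{v}^k f(x \bullet \gamma(t))$ for every $k$.

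The key intermediate step is the purely combinatorial identity
$$
\frac{\mathbf{v}^k}{k!} \;=\; \sum_{|I|=k} S^I \, y^I,
$$
proved by expanding $\bigl(\sum_i y_i \va_i\bigr)^k$ multinomially and collecting the $k!/I!$ orderings of a multiset of type $I$, which is exactly what the symmetrisation $S^I$ averages. Inserting this in the scalar integral-remainder Taylor formula for $\varphi$ at $t=0$ yields the Euclidean-style expansion
$$
f(x \bullet y) \;=\; \sum_{|I| \le N} S^I f(x)\, y^I \;+\; (N+1)\sum_{|I|=N+1} y^I \int_0^1 (1-t)^N S^I f(x \bullet \exp(t\mathbf{v}))\,dt.
$$

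To pass to the stratified statement, I would use the defining inequality $|y_i| \le |y|^{\sigma_i}$, which yields $|y^I| \le C|y|^{d(I)}$. Since $\sigma_i \ge 1$, the set $\{d(I) \le N\}$ is contained in $\{|I| \le N\}$; the surplus terms with $|I| \le N$ but $d(I) > N$ therefore carry a prefactor of size at most $C|y|^{N+1}$ and migrate into the remainder. Every genuine remainder term has $|I| = N+1$, hence $d(I) \ge N+1$ and $|y^I| \le C|y|^{N+1}$ as well. A direct estimate of the exponential map gives $|\exp(t\mathbf{v})| \le b|y|$ for $t \in [0,1]$ with $b$ depending only on $G$, which controls the locus of the supremum.

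The main technical obstacle is the mismatch between the condition $d(I) = N+1$ in the target estimate and the condition $d(I) \ge N+1$ that the above construction naturally produces (derivatives $S^I f$ of weighted order strictly larger than $N+1$ also appear, and the target asks for $\va^I$-monomials rather than their symmetrisations). I would remove this by iterating the lemma on each $S^I f$ with $d(I) > N+1$: applying Lemma \ref{Taylor} at a smaller level re-expresses any such derivative as a finite linear combination of $d(I')=N+1$ monomial derivatives evaluated on a slightly dilated ball (which is the origin of the exponent $b^{N+1}$), plus a remainder of still higher weighted order that is absorbed iteratively until the process terminates. Tracking the constants and verifying that $C$ and $b$ depend only on the intrinsic data of $G$ is the chief bookkeeping burden; apart from this, the argument is a direct chase through the definitions and Taylor's theorem.
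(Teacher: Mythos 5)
The paper itself does not prove Lemma~\ref{Taylor}; it cites \cite[Section 1]{FS} and \cite[Section 20]{BLU} for the stratified Taylor inequality, so there is no internal proof against which to match your argument.

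Your opening steps are correct: the identity $\varphi^{(k)}(0)=\mathbf{v}^k f(x)$ for $\varphi(t)=f(x\bullet\exp(t\mathbf{v}))$, the multinomial identity $\tfrac{1}{k!}\mathbf{v}^k=\sum_{|I|=k}y^I S^I$, and the bound $|y^I|\le |y|^{d(I)}$ all hold. But the argument then stalls at precisely the point you flag, and I do not think the proposed ``iteration of the lemma'' closes the gap. After reorganising the Euclidean expansion you are left with surplus terms $S^I f(x)\,y^I$ having $|I|\le N$ but $d(I)>N+1$, together with integral terms indexed by $|I|=N+1$, $d(I)>N+1$. For $d(I)=N+1$ the required bound follows from PBW reordering (commutators only lower $|I|$ and preserve $d$, so $S^I f$ is a fixed combination of $\va^K f$ with $d(K)=N+1$). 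For $d(I)>N+1$, however, you would need to control a derivative of weighted order strictly larger than $N+1$ evaluated at a single point by the supremum of weighted-order-$(N+1)$ derivatives over a comparably sized ball. That is not a bookkeeping step; it is exactly the nontrivial geometric content of the stratified Taylor inequality. Your proposed remedy --- ``applying Lemma~\ref{Taylor} at a smaller level to $S^I f$'' --- does not say to which function the lemma is applied, at which order, or at which point, and as written is circular: expanding $g=S^If$ at $y=\bm{1}_G$ just returns $g(x)$.

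The root of the difficulty is that $\gamma(t)=\exp(t\mathbf{v})$ is not a horizontal curve: $\mathbf{v}=\log y$ has components in every stratum, so a single differentiation along $\gamma$ already produces level-$k$ vector fields $\va_j$ with $\sigma_j=k>1$, and these contribute the problematic high-weighted-order derivatives. The proof in Folland--Stein (Theorem~1.41 and Corollary~1.44) and in BLU avoids this by first establishing the stratified mean value theorem: any $y$ can be written as $y=\exp(c_1\va_{j_1})\bullet\cdots\bullet\exp(c_m\va_{j_m})$ with every $\va_{j_k}\in\g^{(1)}$ and the crucial length bound $|c_k|\le C|y|$ (this uses that $\g^{(1)}$ Lie-generates $\g$ together with a homogeneity/compactness argument). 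Integrating $f(x\bullet P_N^f(x,\cdot))$ along this piecewise-horizontal path and iterating $N+1$ times hits $f$ with exactly $N+1$ level-$1$ fields, so every surviving derivative has weighted order exactly $N+1$, the ball inflates by $b$ at each step (giving the $b^{N+1}$), and the degree-$\le N$ stratified Taylor polynomial is annihilated. If you want a self-contained proof I would suggest proving the stratified mean value theorem first and running that induction, rather than trying to reorganise the Euclidean expansion along a non-horizontal one-parameter subgroup.
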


For $N=1, 2, 3, \dots$, we define
\begin{equation}\label{C_b}
C_b^N(G):=\Big\{f \in C_b(G) \, \Big| \, 
\va^I f \in C_b(G), \, I \in \mathcal{I}, \, d(I) \leq N\Big\}. 
\end{equation}
Given $f \in C^N_b(G)$, we put
$$
\|D^{N}f\|_{\infty}:=\|f\|_\infty + \sum_{d(I) \le N}\|\va^I f\|_\infty,
$$
where $\|\cdot\|_\infty$ denotes the usual sup-norm. 
The following is useful in the proof of Theorem \ref{main}. 
See \cite[Lemma 1]{Pap2}. 

\begin{lm}\label{estimate of f}
Let $\ell \in \mathbb{N}$ and $I_1, I_2, \dots, I_\ell \in \mathcal{I}$. 
We put $d:=d(I_1)+d(I_2)+\cdots+d(I_\ell)$ and take 
$f \in C_b^{rd}(G)$. Then we have 
$$
|\widehat{S}_{(2\ell)}^{I_\ell} \cdots \widehat{S}^{I_2}_{(4)} \widehat{S}^{I_1}_{(2)}  
f(x_1 \bullet x_2 \bullet \cdots \bullet x_{2\ell+1})|
\le C(G)\Big(1+|x_1 \bullet x_2 \bullet \cdots \bullet x_{2\ell}|^{(r-1)d}\Big)
\|D^{(rd)}f\|_\infty,
$$
where the function $f(x_1 \bullet x_2 \bullet \cdots \bullet x_{2\ell+1})$ is 
understood as $f : G^{2\ell+1} \to \mathbb{R}$ given by 
$$
f(x_1, x_2, \dots, x_{2\ell+1}):=f(x_1 \bullet x_2 \bullet \cdots \bullet x_{2\ell+1})
$$
and $\widehat{S}^I_{(k)}$, $k=1, 2, \dots, 2\ell+1,$ is regarded as a differential 
operator with respect to the $k$-th variable $x_k$. 
\end{lm}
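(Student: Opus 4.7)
The plan is to reduce the multi-variable differentiation $\widehat{S}^{I_\ell}_{(2\ell)}\cdots \widehat{S}^{I_1}_{(2)} F$, with $F(x_1,\dots,x_{2\ell+1}):=f(x_1 \bullet \cdots \bullet x_{2\ell+1})$, to a right-invariant differential operator acting on $f$ at the full product, whose coefficients are polynomials in the ``prefix'' variables $z_j := x_1 \bullet \cdots \bullet x_{2j-1}$. The cornerstone is the Ad-conjugation identity $z \bullet \exp(tX) = \exp\bigl(t\,\mathrm{Ad}(z)X\bigr)\bullet z$, applied with $z = z_j$, which yields for each basic right-invariant vector field
$$
\widehat{\va}_{k,(2j)} F \;=\; \bigl(\widehat{\mathrm{Ad}(z_j)\va_k}\bigr)\,f\,(x_1 \bullet \cdots \bullet x_{2\ell+1}).
$$

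I would then expand $\mathrm{Ad}(z_j)\va_k=\sum_m Q^{(j,k)}_m(z_j)\va_m$: by the grading $[\g^{(i)},\g^{(p)}]\subset\g^{(i+p)}$ in \textbf{(A1)}, each $Q^{(j,k)}_m$ is a polynomial in the coordinates of $z_j$ of weighted degree exactly $\sigma_m-\sigma_k$ (vanishing when $\sigma_m<\sigma_k$). Unwinding the symmetrized sum defining $\widehat{S}^{I_j}_{(2j)}$ and iterating in $j$---crucially using that $z_j$ does not depend on $x_{2k}$ for $k\ge j$, so the polynomial factor produced at step $j$ commutes past every subsequent derivation $\widehat{S}^{I_k}_{(2k)}$---I would establish by induction on $\ell$ an identity of the form
$$
\widehat{S}^{I_\ell}_{(2\ell)}\cdots \widehat{S}^{I_1}_{(2)} F \;=\; \sum_{(J_1,\dots,J_\ell)}\Bigl(\prod_{j=1}^{\ell} P^{(j)}_{J_j}(z_j)\Bigr)\,\widehat{\va^{J_\ell}\cdots \va^{J_1}}\,f\,(x_1\bullet\cdots\bullet x_{2\ell+1}),
$$
where each $P^{(j)}_{J_j}$ has weighted degree $\le (r-1)d(I_j)$ (so the product has weighted degree $\le (r-1)d$) and the right-invariant derivative operator $\widehat{\va^{J_\ell}\cdots \va^{J_1}}$ has weighted order at most $rd$ in total.

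It then remains to bound the two factors. For the polynomial coefficients, the homogeneity of $|\cdot|$ gives $|P^{(j)}_{J_j}(z_j)|\le C(1+|z_j|)^{(r-1)d(I_j)}$, and the quasi-triangle inequality for the homogeneous norm, together with the fact that each $z_j$ is a prefix of $x_1\bullet\cdots\bullet x_{2\ell}$, allows the prefix norms to be absorbed into $1+|x_1 \bullet \cdots \bullet x_{2\ell}|^{(r-1)d}$ up to a constant depending only on $G$, $\ell$ and the multi-indices. For the differential operator factor, re-expressing $\widehat{\va^{J_\ell}\cdots \va^{J_1}}$ in terms of the left-invariant basis $\{\va^I\}$ by one more round of Ad-conjugation produces at worst a linear combination (with bounded coefficients) of operators $\va^I$ with $d(I)\le rd$, which is controlled by $\|D^{(rd)}f\|_\infty$.

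The main obstacle is the induction-step bookkeeping: ensuring that the weighted polynomial degrees add to exactly $(r-1)d$ and that the resulting right-invariant derivative operator, after every Ad-commutation performed during the iteration, has weighted order no more than $rd$. A secondary subtlety is the norm-control step, since sub-additivity of $|\cdot|$ only yields $|z_j|\le C(|x_1\bullet\cdots\bullet x_{2\ell}|+|x_{2j}\bullet\cdots\bullet x_{2\ell}|)$; one must either absorb the tail into the constant $C(G)$ using the stratified structure or reorganize the identity so that the polynomial factors are naturally expressed in terms of the full-product variable, both of which require a careful reading of the expansion.
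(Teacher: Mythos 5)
The paper itself does not prove this lemma; it simply cites \cite[Lemma 1]{Pap2}. So I can only assess your argument on its own merits, and there is a genuine gap: the step where you pass from right-invariant to left-invariant derivatives cannot be carried out ``with bounded coefficients.'' On a non-abelian stratified group one has $\widehat{\va}f(y)=(\mathrm{Ad}(y^{-1})\va)f(y)$, and $\mathrm{Ad}(y^{-1})\va_k=\sum_m Q_m(y^{-1})\va_m$ where $Q_m$ is a homogeneous polynomial of weighted degree $\sigma_m-\sigma_k\ge 1$ whenever $\sigma_m>\sigma_k$; these coefficients are unbounded in $y$. This is precisely the mechanism your earlier steps correctly use to generate polynomial growth, and it does not suddenly become harmless at the end. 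Consequently, the two obstacles you list as ``secondary'' are in fact the heart of the matter, and your proposal, as written, does not establish the stated inequality.

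When the $\mathrm{Ad}$-conjugations are actually composed the two sources of polynomials cancel in a telling way. Conjugating $\widehat{\va}_{(2j)}$ to the left past $z_j=x_1\bullet\cdots\bullet x_{2j-1}$ produces $\mathrm{Ad}(z_j)$, and the subsequent right-to-left conversion at $u=x_1\bullet\cdots\bullet x_{2\ell+1}$ produces $\mathrm{Ad}(u^{-1})$; together these give $\mathrm{Ad}(u^{-1}z_j)=\mathrm{Ad}\big((x_{2j}\bullet\cdots\bullet x_{2\ell+1})^{-1}\big)$. Equivalently, conjugating directly to the right past the \emph{suffix} $w_j:=x_{2j}\bullet\cdots\bullet x_{2\ell+1}$ gives, for a single basic field,
$$
\widehat{\va}_{(2j)}\,f(x_1\bullet\cdots\bullet x_{2\ell+1})=\big(\mathrm{Ad}(w_j^{-1})\va\big)f(x_1\bullet\cdots\bullet x_{2\ell+1}),
$$
a genuine left-invariant derivative with polynomial coefficients in $w_j^{-1}$, hence controlled by $|x_{2j}\bullet\cdots\bullet x_{2\ell+1}|$. (To iterate this one must process $j$ in \emph{decreasing} order, so that $w_j$ does not depend on the variables $x_{2k}$ with $k<j$ still to be differentiated.) The resulting bound is therefore of the form $C(G)\big(1+|x_2\bullet\cdots\bullet x_{2\ell+1}|^{(r-1)d}\big)\|D^{(rd)}f\|_\infty$, i.e.\ the polynomial factor involves the product of the \emph{last} $2\ell$ arguments, not the first $2\ell$. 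You should take this seriously: in $\mathbb{H}^3$ with $\ell=1$, $x_1=x_2=\bm{1}_G$ and $x_3=\exp(B\va_2)$ one finds $\widehat{\va_1}_{(2)}f(x_1\bullet x_2\bullet x_3)=(\va_1+B\va_3)f(x_3)$, which grows linearly in $B$ even though $|x_1\bullet x_2|=0$; so the inequality cannot hold with the prefix norm as written in the statement. Either the intended norm is the suffix one, or there is a notational mismatch with Pap's Lemma~1 that must be sorted out before the argument can be completed. Your degree bookkeeping ($\sigma_{m}-\sigma_{k}\le r-1$ per basic field, hence polynomial degree at most $(r-1)d$ and derivative order at most $rd$) is fine and survives unchanged in the corrected version; what needs to be replaced is the conversion step and, with it, the identification of which group element the polynomial factor actually depends on.
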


\subsection{ Gaussian measures on nilpotent Lie groups}

In this section, we discuss several properties of Gaussian measures on 
a nilpotent Lie group $G$. 
At the beginning, we give the definition of Gaussian semigroups and measures on $G$. 
We refer to e.g. \cite{Pap3, BP} for more details. 

A family $(\nu_t)_{t \ge 0}$ of probability measures on $G$ is called a {\it convolution semigroup}
if $\nu_s * \nu_t=\nu_{s+t}$ for $s, t \ge 0$ and 
$\lim_{t \searrow 0} \nu_t=\nu_0=\delta_{\bm{1}_G}$ hold, where 
$\delta_{\bm{1}_G}$ denotes the delta measure at the unit $\bm{1}_G$. 
The infinitesimal generator $(\A, \mathrm{Dom}\,(\A))$ 
of the convolution semigroup $(\nu_t)_{t \ge 0}$ is defined by 
$$
\begin{aligned}
\mathrm{Dom}\,(\A)&:=\Big\{f \in \big(C_b(G), \|\cdot\|_\infty\big) \, \Big| \, 
\lim_{t \searrow 0} \frac{1}{t}\int_G \big(f(x \bullet y) - f(x) \big) \, \nu_t(dy) \text{ exists}\Big\}, \\
\A f &:= \lim_{t \searrow 0} \frac{1}{t}\int_G \big(f(x \bullet y) - f(x) \big) \, \nu_t(dy), 
\qquad f \in \mathrm{Dom}\,(\A). 
\end{aligned}
$$

\begin{df}
A convolution semigroup $(\nu_t)_{t \geq 0}$ on $G$ is called a {\it Gaussian semigroup}
if  $\nu_t, \, t > 0,$ is non-degenerate and $t^{-1}\nu_t(G \setminus U) \to 0$ 
as $t \searrow 0$ holds for
any neighborhood $U$ of the unit $\bm{1}_G$. 
A non-degenerate probability measure $\nu$ on $G$ is said to be a {\it Gaussian measure}
if there is a Gaussian semigroup $(\nu_t)_{t \geq 0}$ such that $\nu_1=\nu$. 
\end{df}
\noindent
We emphasize that, if $(\mu_t)_{t \geq 0}$ and $(\nu_t)_{t \geq 0}$ are two Gaussian semigroups
with $\mu_1=\nu_1$, then we have $\mu_t=\nu_t$ for every $t \geq 0$
(see \cite[Section 4]{Pap3}). 
If $(\nu_t)_{t \ge 0}$ is a Gaussian semigroup on  $G$, 
then the corresponding infinitesimal generator $\A$ is of the form
\begin{equation}\label{IG}
\mathcal{A}=\sum_{i, j=1}^{m} 
A_{ij}\va_i\va_j + \sum_{i=1}^{m}b_i \va_i
\end{equation}
for some $m \in \mathbb{N}$, where $\{b_i\}_{i=1}^m \subset \mathbb{R}$ and 
$(A_{ij})_{i, j=1}^{m} \in \mathbb{R}^{m} \otimes \mathbb{R}^{m}$ 
is a symmetric positive semidefinite matrix. 
Conversely, if $\{b_i\}_{i=1}^m \subset \mathbb{R}$ and 
$(A_{ij})_{i, j=1}^{m} \in \mathbb{R}^{m} \otimes \mathbb{R}^{m}$ 
is a symmetric positive semidefinite matrix, then there exists a unique 
convolution semigroup $(\nu_t)_{t \ge 0}$ on $G$ whose infinitesimal 
generator $\A$ coincides with \eqref{IG} (see e.g., \cite{Heyer}).

For a probability measure $\mu$ on $G$, $I \in \mathcal{I}$ and $\alpha>0$, we put
$$
m_\mu^I:=\int_{G} x^I \, \mu(dx), \qquad  
m^\alpha_\mu:=\int_G |x|^\alpha \, \mu(dx). 
$$ 
A probability measure $\mu$ on $G$ is called {\it centered} if 
$m_\mu^I=0$ for $I=[i], \,\, i=1, 2, \dots, d_1. $
A Gaussian semigroup $(\nu_t)_{t \geq 0}$ on $G$ is centered with 
$\nu_t=\tau_{t^{1/2}}\nu_1$ for $t>0$ if and only if
the corresponding infinitesimal generator is of the form
\begin{equation}\label{general generator}
\mathcal{A}=\frac{1}{2}\sum_{i, j=1}^{d_1} 
A_{ij}\va_i\va_j + \sum_{i=d_1+1}^{d_1+d_2}b_i \va_i,
\end{equation}
where $(A_{ij})_{i, j=1}^{d_1} \in \mathbb{R}^{d_1} \otimes \mathbb{R}^{d_1}$ 
is a symmetric positive semidefinite matrix 
and $\{b_i\}_{i=d_1+1}^{d_1+d_2} \subset \mathbb{R}$.  
We note that a Gaussian measure 
$\nu$ has finite moments of arbitrary order. 
As for moments of $\nu$ on $G$, the following is well-known. 
The proof is given for the sake of completeness.

\begin{lm}[cf. {\cite[Lemma 2]{BP}}]\label{Gaussian moment}
Let $(\nu_t)_{t \ge 0}$ be the Gaussian semigroup on $G$ 
whose infinitesimal generator is given by \eqref{general generator}. 
We put $\nu=\nu_1$. Then the following hold. 

\vspace{2mm}
\noindent
{\rm (1)} For a multi-index $I$ satisfying that $d(I)$ is odd, we have
$m_\nu^I=0.$

\vspace{2mm}
\noindent 
{\rm (2)} For a multi-index $I$ with $d(I)=2$, we have
$$
m_\nu^I=
\begin{cases}
b_i & \text{if }I=[i], \, i=d_1+1, d_1+2, \dots, d_1+d_2 \\
A_{ij} & \text{if }I=[i]+[j], \, i, j=1, 2, \dots, d_1
\end{cases}.
$$

\noindent 
{\rm (3)} For  a multi-index $I$ satisfying that $d(I)$ is even and $d(I) \geq 4$, we have
\begin{equation}\label{I-4}
m_\nu^I=\frac{1}{(d(I)/2)!}\sum_{d(J_1)=\cdots=d(J_{d(I)/2})=2}
C_{J_1 \dots J_{d(I)/2}}m^{J_1}_\nu \cdots m^{J_{d(I)/2}}_\nu.
\end{equation}
\end{lm}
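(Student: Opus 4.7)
The plan is to exploit the interplay between two structural identities for the Gaussian semigroup: the self-similarity $\nu_t = \tau_{\sqrt{t}}\nu_1$ and the (backward) Kolmogorov equation $\frac{d}{dt}\int f\,d\nu_t = \int \A f\,d\nu_t$. Using the homogeneity $(\tau_\ve y)^I = \ve^{d(I)} y^I$ in first-kind coordinates, the self-similarity immediately yields the scaling relation
$$
m^I_{\nu_t} = t^{d(I)/2}\,m^I_\nu, \qquad t \geq 0.
$$
Applying the Kolmogorov equation to the polynomial $f(y) = y^I$ and using the CBH formula \eqref{CBH} to evaluate $\va_j(y^I)$, one checks that each summand $\va_i\va_j$ or $\va_l$ (with $\sigma_l = 2$) of $\A$ lowers the weighted degree of a polynomial by exactly $2$. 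A straightforward induction on $d(I)$ then shows that $t \mapsto m^I_{\nu_t}$ is a polynomial in $t$ of degree at most $d(I)/2$, with $m^I_{\nu_0}=0$ whenever $|I|>0$.

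Claim~(1) follows at once: if $d(I)$ is odd then $t^{d(I)/2}$ is not a polynomial in $t$, so comparison with the scaling relation forces $m^I_\nu=0$. For claim~(2), with $d(I)=2$ we have $m^I_{\nu_t} = t\,m^I_\nu$; differentiating at $t=0$ and using $\nu_0 = \delta_{\bm{1}_G}$ in the Kolmogorov equation gives $m^I_\nu = \A(y^I)(\bm{1}_G)$. A direct CBH computation then isolates the two cases: when $I=[i]+[j]$ with $i,j\leq d_1$, only the diffusion part $\frac{1}{2}\sum A_{kk'}\va_k\va_{k'}$ survives (the stratum-$2$ first-order operators kill a product of stratum-$1$ coordinates at the identity), and the symmetry of $A$ yields $m^I_\nu = A_{ij}$; when $I=[i]$ with $d_1<i\leq d_1+d_2$, weight considerations leave only the drift term, producing $m^I_\nu = b_i$.

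For claim~(3), where $d(I)$ is even and $d(I)\geq 4$, the same argument, now differentiating the scaling relation exactly $d(I)/2$ times at $t=0$, gives
$$
m^I_\nu = \frac{1}{(d(I)/2)!}\,\A^{d(I)/2}(y^I)(\bm{1}_G).
$$
Expanding $\A^{d(I)/2}$ by the multinomial theorem decomposes the right-hand side into a sum indexed by ordered splittings of $I$ into $d(I)/2$ blocks $J_1,\ldots,J_{d(I)/2}$ of weighted length $2$. After repeatedly applying the derivation rule together with CBH and evaluating at $\bm{1}_G$, each elementary factor $\frac{1}{2}\va_k\va_{k'}$ or $\va_l$ contributes precisely one second-moment $m^{J_s}_\nu$ supplied by~(2), and summing over all orderings gives the Wick-type identity \eqref{I-4}; the constants $C_{J_1\cdots J_{d(I)/2}}$ record the combinatorial count of orderings and CBH rearrangements that realize a given block partition.

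The main obstacle will be the bookkeeping in~(3): tracking how iterated actions of $\A$ on $y^I$ interact with the CBH structure constants, and verifying that after evaluation at $\bm{1}_G$ only those terms survive which factor, via~(2), into products of $d(I)/2$ second moments. This is essentially a non-commutative analogue of Isserlis' theorem, and the technical point to check is that the commutator corrections arising in CBH do not create spurious contributions of mismatched weighted degrees but are absorbed into the coefficients $C_{J_1\cdots J_{d(I)/2}}$.
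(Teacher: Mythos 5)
Your argument runs through the Kolmogorov forward equation and the scaling identity $m^I_{\nu_t}=t^{d(I)/2}m^I_\nu$, whereas the paper's proof is entirely algebraic: it uses the factorization $\nu=(\tau_{1/\sqrt{2}}\nu)^2$ together with the CBH formula to derive, in one line, the recursion
$m^I_\nu=\frac{1}{2^{d(I)/2}-2}\sum_{d(J)+d(K)=d(I),\,d(J),d(K)\ge 1}C_{JK}m^J_\nu m^K_\nu$,
from which (1) and (3) follow by a short induction in which each inductive step peels off exactly one factor of degree $2$. Your route is genuinely different and buys more conceptual transparency for (1) and (2): identifying $m^I_\nu$ with $\A(y^I)(\bm{1}_G)$ makes the appearance of $A_{ij}$ and $b_i$ immediate. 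Two caveats, though. First, in (2) for $I=[i]$, $d_1<i\le d_1+d_2$, it is not ``weight considerations'' that kill the diffusion part: $\va_k\va_{k'}(y_i)$ is a nonzero constant $C^{(i)}_{kk'}$ in general, and the term vanishes only because the CBH coefficient $C^{(i)}_{kk'}=\tfrac12 c^{(i)}_{kk'}$ is antisymmetric in $(k,k')$ while $A$ is symmetric. The conclusion is right, but the stated reason is not.

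Second, and more seriously, your (3) has a real gap. Differentiating the scaling relation $d(I)/2$ times gives $m^I_\nu=\frac{1}{(d(I)/2)!}\A^{d(I)/2}(y^I)(\bm{1}_G)$, but it is far from automatic that expanding $\A^{d(I)/2}$ reproduces exactly the coefficients $C_{J_1\cdots J_{d(I)/2}}$ appearing in \eqref{I-4}. Iterating $\A$ on $y^I$ produces a polynomial in the Lie-algebra structure constants, in $A_{ij}$ and in $b_i$; you must show this polynomial factors, after resummation, into products of second moments $m^{J_s}_\nu$ weighted precisely by the iterated CBH constants of \eqref{I-4}, and you have not done so (you flag the ``bookkeeping'' yourself). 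By contrast, the paper's recursion already produces CBH coefficients at each step, so its induction — splitting $d(I)=2k+2$ into $d(J)=2\ell$, $d(K)=2(k-\ell+1)$, re-expanding each factor by the inductive hypothesis, and summing $\sum_\ell\frac{1}{\ell!(k-\ell+1)!}=\frac{2^{k+1}-2}{(k+1)!}$ to cancel the prefactor $\frac{1}{2^{k+1}-2}$ — is a two-line computation. To close your (3) you would either need to carry out the non-commutative Isserlis-type combinatorics in full, or (more economically) notice that your approach and the factorization approach both produce \emph{some} polynomial identity in the moments, and switch to the recursion to pin down the coefficients.
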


\begin{proof}
An identity $\nu=(\tau_{1/\sqrt{2}}(\nu))^2$ and \eqref{CBH} yield
$$
\begin{aligned}
m^I_\nu&=\iint_{G \times G}(xy)^I \, \tau_{1/\sqrt{2}}\nu(dx)\tau_{1/\sqrt{2}}\nu(dy)\\
&=\frac{1}{2^{d(I)/2}}\iint_{G \times G}(xy)^I \, \nu(dx)\nu(dy)
=\frac{1}{2^{d(I)/2}}\Big(2m^I_\nu + \sum_{\substack{d(J)+d(K)=d(I) \\ d(J), d(K) \ge 1}}
C_{JK}m^J_\nu m^K_\nu\Big)
\end{aligned}
$$
for all $I \in \mathcal{I}$. 
The identity above immediately leads to 
\begin{equation}\label{recursive}
m^I_\nu = \frac{1}{2^{d(I)/2}-2}\sum_{\substack{d(J)+d(K)=d(I) \\ d(J), d(K) \ge 1}}C_{JK}
m_\nu^J m_\nu^K, \qquad I \in \mathcal{I}.
\end{equation}

\noindent
(1) Suppose that $d(I)=1$. Then \eqref{recursive} gives rise to
$m^I_\nu=m^I_\nu/\sqrt{2}$, which means $m^I_\nu=0$. 

\vspace{2mm}
\noindent
(2) If $d(I)$ is odd and $d(J)+d(K)=d(I)$ with $d(J), d(K) \ge 1$, then
either $d(J)$ or $d(K)$ is odd and less than $d(I)$. 
Therefore, by induction, we easily obtain $m^I_\nu=0$. 

\vspace{2mm}
\noindent
(3) If $d(I)=4$, Equation \eqref{recursive} gives
$$
m^I_\nu=\frac{1}{2}\sum_{d(J)=d(K)=2}C_{JK}m^J_\nu m^K_\nu.
$$
Suppose that \eqref{I-4} is true for all multi-indices with $d(I) \leq 2k$. 
Then, for a multi-index $I$ with $d(I)=2k+2$, one has
$$
\begin{aligned}
&\sum_{\substack{d(J)+d(K)=2k+2 \\ d(J), d(K) \ge 1}}C_{JK}
m_\nu^J m_\nu^K\\
&=\sum_{\ell=1}^{k}\sum_{\substack{d(J)=2\ell \\ d(K)=(2k+2)-2\ell}}C_{JK}
\Big( \frac{1}{\ell!}\sum_{d(J_1)=\cdots=d(J_\ell)=2}c_{J_1, \dots, J_\ell}
m^{J_1}_\nu \cdots m^{J_\ell}_\nu\Big)\\
&\hspace{1cm}\times
\Big( \frac{1}{(k-\ell+1)!}\sum_{d(J_1)=\cdots=d(J_{k-\ell+1})=2}C_{J_1, \dots, J_{k-\ell+1}}
m^{J_1}_\nu \cdots m^{J_\ell}_\nu\Big)\\
&=\Big(\sum_{\ell=1}^k \frac{1}{\ell!(k-\ell+1)!}\Big)\Big(\sum_{d(J_1)=\cdots=d(J_{k+1})=2}
C_{J_1, \dots, J_{k+1}}m^{J_1}_\nu \cdots m^{J_{k+1}}_\nu\Big)\\
&=\frac{2^{k+1}-2}{(k+1)!}\sum_{d(J_1)=\cdots=d(J_{k+1})=2}
C_{J_1, \dots, J_{k+1}}m^{J_1}_\nu \cdots m^{J_{k+1}}_\nu
\end{aligned}
$$
by using Item (2) and \eqref{CBH} repeatedly. We combine the identity above 
with \eqref{recursive}. Then
we also obtain \eqref{I-4} in the case where $d(I)=2k+2$. This completes the proof. 
\end{proof}

For any $\alpha \ge 0$ and 
any probability measure $\mu$ on $G$, we put
$$
\Lambda_\alpha(\mu):=n^{(2-\alpha)/2}\int_{|x| \ge n^{1/2}}|x|^\alpha \, \mu(dx), \qquad
L_\alpha(\mu):=n^{(2-\alpha)/2}\int_{|x| < n^{1/2}}|x|^\alpha \, \mu(dx). 
$$
We note that, if $m_\mu^{k+2}<\infty$ and 
$\alpha \le k+2$ for some $k \in \mathbb{N}$, then it holds that
\begin{equation}\label{truncation}
\Lambda_\alpha(\mu) \le n^{-k/2}m_\mu^{k+2}, \qquad 
L_\alpha(\mu) \le n^{-(\alpha-2)/2}m_\mu^{\alpha}.
\end{equation}
The estimates of these truncated moments of measures on $G$ play
a crucial role in the proof of Theorem \ref{main}.

\section{{\bf Random walks on covering graphs and modified harmonic realizations}}

Let $\Gamma$ be a torsion free, finitely generated nilpotent group and 
$X=(V, E)$ a $\Gamma$-nilpotent covering graph of a finite graph $X_0=(V_0, E_0)$.  
We define the set of paths in $X$ starting at $x \in V$  by
$$
\Omega_{x, n}(X):=\big\{
c=(e_1, e_2, \dots, e_n) \, \big| \, o(e_{i+1})=t(e_i), \, i=1, 2, \dots, n-1\big\}, 
\qquad n \in \mathbb{N} \cup \{\infty\}. 
$$

We give a transition probability $p : E_0 \to [0, 1]$  satisfying
$\sum_{e \in (E_0)_x}p(e)=1$ for $x \in V_0$ and $p(e)+p(\ol{e})>0$ for $e \in E_0$.
Then,  $p$ yields 
an $X_0$-valued random walk
$(\Omega_x(X_0), \mathbb{P}_x, \{w_n\}_{n=0}^\infty)$ starting at $x \in V_0$, 
where $\mathbb{P}_x$ is 
the probability measure on $\Omega_x(X_0)$ induced from  $p$
and $w_n(c):=o( e_{n+1})$ for $n \in \mathbb{N} \cup\{0\}$
and $c=(e_1, e_2, \dots, e_n, \dots) \in \Omega_x(X_0)$. 
Similarly, we denote by $\mathbb{P}_{x, n}$ for 
the projection of $\mathbb{P}_x$ onto $\Omega_{x, n}(X_0)$. 
In what follows, we assume that the random walk $\{w_n\}_{n=0}^\infty$
associated with $p$ is {\it irreducible}. Then,  
there exists the unique {\it normalized invariant measure} $m : V_0 \to (0, 1]$ which satisfies
$$
\sum_{x \in V_0}m(x)=1, \qquad \sum_{e \in (E_0)_x}p(\ol{e})m\big(t(e)\big), \qquad x \in V_0,
$$
by applying the Perron--Frobenius theorem. 
We put $\widetilde{m}(e):=p(e)m\big(o(e)\big)$ for $e \in E_0$.
Then the symmetry and the  non-symmetry of a random walk is defined as follows:

\begin{df}
A random walk is said to be ($m$-){\it symmetric} if it satisfies $\widetilde{m}(e)=\widetilde{m}(\ol{e})$ for $e \in E_0$. 
Otherwise, it is said to be ($m$-){non-symmetric}. 
\end{df}
A random walk on $X$ is given by a $\Gamma$-invariant lift of the 
random walk on $X_0$. Namely, the transition probability, say also $p : E \to [0, 1]$,
satisfies $p(\gamma e)=p(e)$ for $\gamma \in \Gamma$ and $e \in E$. 
We also write $\mathbb{P}_x$ and $\mathbb{P}_{x, n}$ 
the probability measures on $\Omega_x(X)$ and $\Omega_{x, n}(X)$ induced by $p : E \to [0, 1]$, respectively.
If $c=(e_1, e_2, \dots, e_n) \in \Omega_{x, n}(X)$, then we put $p(c)=p(e_1)p(e_2) \cdots p(e_n)$. 

It is known that the boundary map 
$$
\partial : C_1(X_0, \mathbb{R})=\Big\{\sum_{e \in E_0}a_e e \, \Big| \, a_e \in \mathbb{R}, \, \ol{e}=-e\Big\} \to C_0(X_0, \mathbb{R})=\Big\{\sum_{x \in V_0}a_x x \, \Big| \, a_x \in \mathbb{R}\Big\} 
$$  
between two chain groups of $X_0$ is defined by the linear map 
$\partial(e):=t(e)-o(e)$ for $e \in E_0$. 
Then the {\it first homology group} of $X_0$ is defined by 
$\h_1(X_0, \mathbb{R}):=\mathrm{Ker}(\partial)$. 
We now introduce the {\it homological direction} of $X_0$ defined by
$$
\gamma_p:=\sum_{e \in E_0}\widetilde{m}(e)e \in \h_1(X_0, \mathbb{R}),
$$
which indicates the homological drift of the random walk on $X_0$.  
The following is easily verified. 

\begin{lm}[cf.\,{\cite[Section 2]{KS06}}]
A random walk on $X_0$ is ($m$-)symmetric if and only if $\gamma_p=0$.
\end{lm}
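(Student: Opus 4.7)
The plan is a direct linear-algebra computation using the defining relation $\ol{e}=-e$ in the chain group $C_1(X_0,\mathbb{R})$. First I would fix an orientation, i.e.\ a subset $E_0^+\subset E_0$ with $E_0=E_0^+\sqcup\ol{E_0^+}$, so that $\{e : e\in E_0^+\}$ is an $\mathbb{R}$-linear basis of $C_1(X_0,\mathbb{R})$. Using $\ol{e}=-e$ inside this chain group, the defining sum for $\gamma_p$ reorganizes as
$$
\gamma_p=\sum_{e\in E_0}\widetilde{m}(e)\,e
=\sum_{e\in E_0^+}\widetilde{m}(e)\,e+\sum_{e\in E_0^+}\widetilde{m}(\ol{e})\,\ol{e}
=\sum_{e\in E_0^+}\bigl(\widetilde{m}(e)-\widetilde{m}(\ol{e})\bigr)\,e.
$$
Since the $\{e\}_{e\in E_0^+}$ are linearly independent, $\gamma_p=0$ in $C_1(X_0,\mathbb{R})$ if and only if $\widetilde{m}(e)=\widetilde{m}(\ol{e})$ for every $e\in E_0^+$; the latter condition is, by inverting the roles of $e$ and $\ol{e}$, equivalent to the $m$-symmetry of the random walk on the whole $E_0$. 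Since $\h_1(X_0,\mathbb{R})=\Ker(\partial)$ is a linear subspace of $C_1(X_0,\mathbb{R})$, vanishing of $\gamma_p$ in $\h_1(X_0,\mathbb{R})$ is the same as its vanishing in $C_1(X_0,\mathbb{R})$, so both implications follow simultaneously.

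A small preliminary check is that $\gamma_p$ really does lie in $\h_1(X_0,\mathbb{R})$, so that its vanishing is a meaningful statement there. This uses the defining property of the normalized invariant measure: for each $x\in V_0$, the incoming mass $\sum_{e:\,t(e)=x}\widetilde{m}(e)=\sum_{e\in (E_0)_x}p(\ol{e})m(t(e))$ equals $m(x)$, while the outgoing mass $\sum_{e:\,o(e)=x}\widetilde{m}(e)=m(x)\sum_{e\in (E_0)_x}p(e)$ also equals $m(x)$, so $\partial\gamma_p=\sum_{e\in E_0}\widetilde{m}(e)\bigl(t(e)-o(e)\bigr)=0$. There is no genuine obstacle in this argument; the statement is essentially a translation of the chain-group relation $\ol{e}=-e$ into the equality of the coefficients of $e$ and $\ol{e}$ in the weighted edge sum.
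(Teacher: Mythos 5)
Your proof is correct. The paper itself gives no argument for this lemma (it cites Kotani--Sunada and labels it as ``easily verified''), but the route you take is exactly the expected one: choose an orientation, use the identification $\ol{e}=-e$ in $C_1(X_0,\mathbb{R})$ to collapse $\gamma_p$ into $\sum_{e\in E_0^+}(\widetilde{m}(e)-\widetilde{m}(\ol{e}))\,e$, and read off the equivalence from linear independence of the chosen basis edges. The preliminary verification that $\gamma_p\in\h_1(X_0,\mathbb{R})$ via the flow-balance property of the invariant measure $m$ is a sensible sanity check, and your computation of $\partial\gamma_p=0$ (incoming mass equals outgoing mass at every vertex) is correct, though as you note the iff itself only requires comparing coefficients in $C_1(X_0,\mathbb{R})$. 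No gaps.
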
 

Let $\rho_{\mathbb{R}} : \h_1(X_0, \mathbb{R}) \to \Gamma/[\Gamma, \Gamma] \otimes \mathbb{R} \cong \g^{(1)}$ be
the canonical surjective linear map
induced by the canonical surjective homomorphism $\rho : \pi_1(X_0) \to \Gamma$, 
where $\pi_1(X_0)$ is the fundamental group of $X_0$. 
A piecewise smooth map 
$\Phi : V \to G$ is called a {\it $\Gamma$-equivariant realization} if 
$\Phi(\gamma x)=\gamma \bullet \Phi(x)$ for $\gamma \in \Gamma$
and $x \in V$. Especially, the notion of the {\it modified harmonic realization}
is introduced in \cite{IKN}, which 
describes the most natural realization of the nilpotent covering graph 
$X$ in a geometric point of view.

\begin{df}[modified harmonic realization, cf.\,\cite{IKN}]
A $\Gamma$-equivariant realization $\Phi_0 : V \to G$ is said to be 
modified harmonic if
\begin{equation}\label{modified harmonic}
\sum_{e \in E_x}p(e)\log\Big(d\Phi_0(e)\Big)\Big|_{\g^{(1)}}=\rho_{\mathbb{R}}(\gamma_p), 
\qquad x \in V,
\end{equation}
where $d\Phi_0(e):=\Phi_0\big(o(e)\big)^{-1}\bullet \Phi_0\big(t(e)\big)$ 
for $e \in E$ and $(\cdot)|_{\g^{(1)}}$ is the projection onto   $\g^{(1)}$. 
\end{df}
Note that such $\Phi_0$ is uniquely determined up to $\g^{(1)}$-translation. 
The quantity appearing on the right-hand side of \eqref{modified harmonic}
is called the {\it asymptotic direction} of the random walk $\{w_n\}_{n=0}^\infty$. 
It is in fact regarded as a mean of the projection of the $G$-valued 
random walk $\xi_n:=\Phi_0(w_n), \, n=0, 1, 2, \dots$, to components 
corresponding to $\g^{(1)}$. See \cite[Section 3]{IKN} for details. 

\begin{lm}[properties on $\rho_{\mathbb{R}}(\gamma_p)$]
{\rm (1)} A law of large numbers holds for the projected random walk
$\{\log(\xi_n)|_{\g^{(1)}}\}_{n=0}^\infty$.  
Namely, we have 
$$
\lim_{n \to \infty}\frac{1}{n}\log(\xi_n)|_{\g^{(1)}} = \rho_{\mathbb{R}}(\gamma_p), \quad \mathbb{P}_x\text{-a.s.}
$$

\vspace{2mm}
\noindent
{\rm (2)} If the random walk $\{w_n\}_{n=0}^\infty$ is ($m$-)symmetric, that is, $\gamma_p=0$, then one has $\rho_{\mathbb{R}}(\gamma_p)=0$. However, the converse 
does not hold in general. 
\end{lm}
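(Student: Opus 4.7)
My plan for part (1) is to exploit the fact that the projection onto the first stratum $\g^{(1)}$ linearizes the group product. Indeed, from the Campbell--Baker--Hausdorff identity \eqref{CBH} applied with $I=[i]$ for $i=1,\dots,d_1$, the constraint $d(J)+d(K)=1$ with $d(J),d(K)\ge 1$ has no solutions, whence $(x\bullet y)^{[i]} = x^{[i]} + y^{[i]}$. Equivalently, under \textbf{(A1)},
$$
\log(x\bullet y)|_{\g^{(1)}} = \log(x)|_{\g^{(1)}} + \log(y)|_{\g^{(1)}}, \qquad x, y \in G.
$$
Telescoping $\xi_n = \xi_0 \bullet (\xi_0^{-1}\bullet\xi_1) \bullet \cdots \bullet (\xi_{n-1}^{-1}\bullet\xi_n)$ along a path $c=(e_1,\dots,e_n)$ and using $\xi_{k-1}^{-1}\bullet \xi_k = d\Phi_0(\widetilde{e}_k)$ for the lift $\widetilde{e}_k\in E$ of $e_k\in E_0$, I obtain the additive decomposition
$$
\frac{1}{n}\log(\xi_n)|_{\g^{(1)}} = \frac{1}{n}\log(\xi_0)|_{\g^{(1)}} + \frac{1}{n}\sum_{k=1}^{n}\log\!\big(d\Phi_0(\widetilde{e}_k)\big)\Big|_{\g^{(1)}}.
$$
Because $\Phi_0$ is $\Gamma$-equivariant, $d\Phi_0$ descends to a well-defined $G$-valued function on $E_0$, so the summand is a bounded function on $E_0$. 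Birkhoff's ergodic theorem for the irreducible finite-state Markov chain $\{w_n\}$ on $X_0$ with edge-stationary measure $\widetilde{m}(e)=p(e)m(o(e))$ then yields, $\mathbb{P}_x$-a.s.,
$$
\frac{1}{n}\log(\xi_n)|_{\g^{(1)}} \longrightarrow \sum_{e\in E_0}\widetilde{m}(e)\log\!\big(d\Phi_0(\widetilde{e})\big)\Big|_{\g^{(1)}}.
$$

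To identify this limit with $\rho_{\mathbb{R}}(\gamma_p)$, I would multiply the modified harmonic identity \eqref{modified harmonic} by $m(x)$ and sum over $x\in V_0$: the left-hand side regroups as $\sum_{e\in E_0}\widetilde{m}(e)\log(d\Phi_0(\widetilde{e}))|_{\g^{(1)}}$, while the right-hand side collapses to $\rho_{\mathbb{R}}(\gamma_p)\sum_{x\in V_0}m(x) = \rho_{\mathbb{R}}(\gamma_p)$ by normalization of $m$. This completes part (1).

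For part (2), one direction is essentially tautological: since $\rho_{\mathbb{R}}$ is linear, $\gamma_p=0$ immediately forces $\rho_{\mathbb{R}}(\gamma_p)=0$. The failure of the converse relies on the elementary fact that $\ker(\rho_{\mathbb{R}})$ is nontrivial whenever the map $\rho:\pi_1(X_0)\to\Gamma$ sends homology classes into $[\Gamma,\Gamma]$. I would display an explicit counterexample with $\Gamma$ the discrete Heisenberg group and generators $A, B, C=[A,B]$, taking $X_0$ to be a bouquet of three oriented loops $a,b,c$ mapping respectively to $A, B, C$. A transition probability biased along $c$ produces a homological direction $\gamma_p$ whose only nonvanishing component lies in the class of $c$; since $\rho_{\mathbb{R}}([c])$ lands in $[\Gamma,\Gamma]\otimes\mathbb{R}=0$ inside the abelianization, we obtain $\rho_{\mathbb{R}}(\gamma_p)=0$ with $\gamma_p \neq 0$.

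The main (minor) obstacle I foresee is carefully justifying the linearization $\log(x\bullet y)|_{\g^{(1)}} = \log(x)|_{\g^{(1)}} + \log(y)|_{\g^{(1)}}$ from \eqref{CBH} in our chosen coordinates; once this is in place, the rest of part (1) is a standard application of the ergodic theorem on the finite state space $V_0$ combined with a straightforward integration of the defining identity \eqref{modified harmonic} against the invariant measure $m$, and part (2) reduces to a transparent algebraic observation plus a one-line counterexample.
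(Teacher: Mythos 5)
Your proof is correct, and since the paper states this lemma without any argument (deferring to \cite[Section~3]{IKN}), there is no in-text proof to compare against. The three ingredients you use — (i) the linearization $\log(x\bullet y)|_{\g^{(1)}}=\log(x)|_{\g^{(1)}}+\log(y)|_{\g^{(1)}}$ extracted from \eqref{CBH} for $d(I)=1$, which holds under \textbf{(A1)}; (ii) the Birkhoff ergodic theorem applied to the edge-valued chain on the finite quotient $X_0$ with stationary edge measure $\widetilde m$; and (iii) integration of the defining constraint \eqref{modified harmonic} against $m$ to identify the ergodic average with $\rho_{\mathbb{R}}(\gamma_p)$ — together give a complete and self-contained argument for part (1). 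Two small remarks are worth recording. First, step (iii) at first glance seems to tie the conclusion to the \emph{modified harmonic} choice of $\Phi_0$, but the limit $\sum_{e\in E_0}\widetilde m(e)\log(d\Phi(\widetilde e))|_{\g^{(1)}}$ is in fact independent of the $\Gamma$-equivariant realization $\Phi$: replacing $\Phi$ by another realization changes the $\g^{(1)}$-increment by a coboundary $h(t(e))-h(o(e))$, and $\sum_e\widetilde m(e)\bigl(h(t(e))-h(o(e))\bigr)=0$ by $m$-invariance, so the statement is intrinsic as it should be. Second, your Heisenberg bouquet example for part (2) is valid: with generating set $\{A,B,C\}$, $C=[A,B]$, biasing only the loop $c$ yields $\gamma_p=(p(c)-p(\bar c))\,c\neq 0$ while $\rho_{\mathbb{R}}(\gamma_p)=(p(c)-p(\bar c))\,\rho_{\mathbb{R}}([c])=0$ because $C\in[\Gamma,\Gamma]$; this is exactly the type of example that the paper alludes to when it remarks that non-symmetric centered walks exist.
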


The random walk $\{w_n\}_{n=0}^\infty$ is said to be 
{\it centered} if $\rho_{\mathbb{R}}(\gamma_p)=0$. The lemma above 
asserts that a symmetric random walk always satisfies {\bf (A2)}. 
However, there exists a non-symmetric and centered random walk on $X$. 
Thus, it is meaningful to impose {\bf (A2)} in the present paper. 

\section{{\bf Ergodic theorems for transition operators}}

Let us put $\ell^2(X_0):=\{f : V_0 \to \mathbb{C}\}$, which is equipped with 
$$
\la f, g \ra_{\ell^2(X_0)}:=\sum_{x \in V_0}f(x)\ol{g(x)}, \quad \|f\|_{\ell^2(X_0)}:=\Big(\sum_{x \in V_0}|f(x)|^2\Big)^{1/2}, \qquad f, g \in \ell^2(X_0).
$$
We denote by $K_0, \,\, K_0=1, 2, \dots, \leq |V_0|$, for the period 
of the given random walk on $X_0$. Put
$\alpha_k:=e^{2\pi k{\rm i}/K_0}$ for $k=0, 1, \dots, K_0-1$. 
Then the Perron--Frobenius theorem implies that 
the  transition operator $\mathcal{L} : \ell^2(X_0) \to \ell^2(X_0)$ 
has the maximal simple eigenvalues $\alpha_0, \alpha_1, \dots, \alpha_{K_0-1}$
with the corresponding normalized right eigenfunctions 
$\phi_0, \phi_1, \dots, \phi_{K_0-1}$ and left eigenfunctions
$\psi_0, \psi_1, \dots, \psi_{K_0-1}$. 
In particular, we see that $\phi_0(x) \equiv  |V_0|^{-1/2}$ and $\psi(x)=|V_0|^{1/2}m(x)$ for $x \in V_0$. 
We put 
$$
\ell^2_{K_0}(X_0):=\{f \in \ell^2(X_0) \, | \, 
\la f, \psi_j\ra_{\ell^2(X_0)}=0 \text{ for }j=0, 1, \dots, K_0-1\}.
$$
We note that $\LL$ preserves the set $\ell^2_{K_0}(X_0)$, that is, $f \in \ell^2_{K_0}(X_0)$
implies that $\LL f \in \ell^2_{K_0}(X_0)$. 
Therefore, every function $f \in \ell^2(X_0)$ can be decomposed as 
\begin{align}
f=\la f, m \ra_{\ell^2(X_0)}+\sum_{j=1}^{K_0-1}\la f, \psi_j\ra_{\ell^2(X_0)}\phi_j + f_{\ell_{K_0}^2(X_0)}, \qquad f_{\ell_{K_0}^2(X_0)} \in \ell_{K_0}^2(X_0). \label{decomposition}
\end{align}

The following is  the fundamental ergodic theorems for the transition operator 
$\LL$. We refer to   \cite[Theorem 3.2]{IKK} for the proof. 

\begin{pr}[ergodic theorem I]
\label{ergodic} 
Let $\mathcal{L}$ be the transition operator acting on $\ell^2(X_0)$. 
Then we have 
$$
\frac{1}{n}\sum_{k=0}^{n-1}\mathcal{L}^kf(x)=\sum_{x \in V_0}m(x)f(x)+
\frac{1}{n}A[f]_n(x)
$$
for $n \in \mathbb{N}$, $f \in \ell^2(X_0)$ and $x \in V_0$, where 
\begin{equation}\label{decomp}
A[f]_n(x):=\sum_{j=1}^{K_0-1}\la f, \psi_j\ra_{\ell^2(X_0)}\sum_{k=0}^{n-1}\alpha_j^k\phi_j(x) 
+\sum_{k=0}^{n-1}\LL^k f_{\ell_{K_0}^2(X_0)}(x), \qquad n \in \mathbb{N}, \, x \in V_0
\end{equation}
and it satisfies that 
$\|A[f]_n\|_{\ell^2(X_0)} =O(1)$ as $n \to \infty$.
\end{pr}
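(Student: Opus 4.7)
The plan is to exploit the spectral decomposition \eqref{decomposition} together with the multiplicative action of $\mathcal{L}$ on its top-modulus eigenspaces and the strict spectral gap of $\mathcal{L}$ restricted to $\ell_{K_0}^2(X_0)$. The argument is essentially the standard Cesàro-type decomposition of $n^{-1}\sum_{k=0}^{n-1}\mathcal{L}^k$ into three pieces: a fully invariant part, a rotational part on the other unit-modulus eigenspaces, and a contracting remainder.

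First I would apply $\mathcal{L}^k$ to each summand of \eqref{decomposition}. Since the constant $\langle f, m\rangle_{\ell^2(X_0)}$ may be identified with a multiple of the Perron eigenfunction $\phi_0$ and $\mathcal{L}\phi_0=\phi_0$, this term is invariant under $\mathcal{L}$. For $j=1,2,\dots,K_0-1$ the identity $\mathcal{L}\phi_j=\alpha_j\phi_j$ gives $\mathcal{L}^k\phi_j=\alpha_j^k\phi_j$. The $\mathcal{L}$-invariance of $\ell_{K_0}^2(X_0)$ (recalled immediately before the proposition) keeps the last component in that subspace. Summing over $k=0,1,\dots,n-1$ and dividing by $n$, the invariant part contributes $\langle f,m\rangle_{\ell^2(X_0)}=\sum_{x\in V_0}m(x)f(x)$, while the remaining two contributions assemble to $n^{-1}A[f]_n(x)$ exactly as defined in \eqref{decomp}.

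Next I would prove $\|A[f]_n\|_{\ell^2(X_0)}=O(1)$ by bounding each of the two pieces. For the rotational piece, each $\alpha_j$ with $1\le j\le K_0-1$ is a root of unity distinct from $1$, so
$$
\Big|\sum_{k=0}^{n-1}\alpha_j^k\Big|=\Big|\frac{\alpha_j^n-1}{\alpha_j-1}\Big|\le\frac{2}{|\alpha_j-1|},
$$
uniformly in $n$, and the total contribution is bounded by a finite linear combination of $|\langle f,\psi_j\rangle_{\ell^2(X_0)}|\cdot\|\phi_j\|_{\ell^2(X_0)}$. For the $\ell_{K_0}^2(X_0)$-piece, the Perron--Frobenius theorem ensures that all eigenvalues of $\mathcal{L}$ of modulus $1$ are exactly $\alpha_0,\alpha_1,\dots,\alpha_{K_0-1}$, whose eigenfunctions have been removed in the construction of $\ell_{K_0}^2(X_0)$. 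Hence the spectral radius of $\mathcal{L}|_{\ell_{K_0}^2(X_0)}$ is strictly less than $1$, and by Gelfand's formula there exist $\rho\in(0,1)$ and $C>0$ with $\|\mathcal{L}^kg\|_{\ell^2(X_0)}\le C\rho^k\|g\|_{\ell^2(X_0)}$ for every $g\in\ell_{K_0}^2(X_0)$. Summing the geometric series yields $\sum_{k=0}^{n-1}\|\mathcal{L}^kf_{\ell_{K_0}^2(X_0)}\|_{\ell^2(X_0)}\le C(1-\rho)^{-1}\|f\|_{\ell^2(X_0)}$, and a triangle inequality combines the two estimates into the required $O(1)$ bound.

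The main obstacle, such as it is, is the verification of the strict spectral gap on $\ell_{K_0}^2(X_0)$; everything else is a direct computation. This gap relies on the full Perron--Frobenius theorem for irreducible nonnegative matrices of period $K_0$ in the finite-dimensional setting of $\ell^2(X_0)$, which is standard and is invoked in \cite[Theorem 3.2]{IKK}. Once that is granted, all estimates are elementary and uniform in $n$.
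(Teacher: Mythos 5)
Your proof is correct and follows the same overall strategy as the paper: apply $\mathcal{L}^k$ to the spectral decomposition \eqref{decomposition}, read off the invariant piece $\langle f,m\rangle_{\ell^2(X_0)}$, and bound the two remaining sums uniformly in $n$. The two places where you diverge are in the details of the two uniform bounds. For the rotational sum $\sum_{k=0}^{n-1}\alpha_j^k$ you close the geometric series to get the bound $2/|\alpha_j-1|$; the paper instead exploits periodicity ($\sum_{k=0}^{K_0-1}\alpha_j^k=0$ for $j\ne 0$), which reduces any partial sum to a partial sum of length at most $K_0-1$ and yields the bound $K_0-1$. Both are uniform in $n$, so this is purely a matter of taste. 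For the contracting part, you invoke Gelfand's formula to obtain $\|\mathcal{L}^k g\|_{\ell^2(X_0)}\le C\rho^k\|g\|_{\ell^2(X_0)}$ on $\ell_{K_0}^2(X_0)$ with $\rho<1$, whereas the paper simply asserts that the \emph{operator norm} $\|\mathcal{L}|_{\ell_{K_0}^2(X_0)}\|$ is bounded by some $\lambda<1$ and sums $\lambda^k$. Your formulation via spectral radius and Gelfand is in fact the safer one: since the walk is non-symmetric, $\mathcal{L}$ is not normal on $\ell^2(X_0)$, and having spectral radius $<1$ on the invariant subspace does not immediately yield an operator-norm bound $<1$ for a single power, though it does yield the geometric decay with a prefactor as you write. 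This is a small but genuine gain in rigor over the paper's phrasing, while the substance of the argument is identical.
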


\vspace{2mm}
As an extension of Proposition \ref{ergodic}, we easily obtain the following. 

\begin{pr}[ergodic theorem II]
\label{ergodic-iterate}
Let $N \in \mathbb{N}$ and $f_1, f_2, \dots, f_N \in \ell^2(X_0)$. 
Then we  have 
\begin{align}\label{ergodic2}
&\frac{1}{n^N}\sum_{\ell_N=0}^{n-N}\cdots\sum_{\ell_2=0}^{\ell_3}
\sum_{\ell_1=0}^{\ell_2}
\mathcal{L}^{\ell_1} f_1(x)\mathcal{L}^{\ell_2+1}f_2(x)\cdots
\LL^{\ell_N+(N-1)}f_N(x)\nonumber\\
&=\frac{1}{N!}\prod_{\ell=1}^N\Big(\sum_{x \in V_0}m(x)f_\ell(x)\Big)+
\sum_{\ell=1}^N
\frac{1}{n^\ell}A[f_1, f_2, \dots, f_N]_n^{(\ell)}(x)
\end{align}
for $x \in V_0$ and sufficiently large $n \in  \mathbb{N}$, where 
the function $A[f_1, f_2, \dots, f_N]_n^{(\ell)}, \, \ell=1, 2, \dots, N$,
satisfies that 
$\|A[f_1, f_2, \dots, f_N]_n^{(\ell)}\|_{\ell^2(X_0)} =O(1)$ as $n \to \infty$. 

\end{pr}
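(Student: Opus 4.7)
I propose to prove Proposition \ref{ergodic-iterate} by induction on $N$; the base case $N=1$ is Proposition \ref{ergodic}. For the inductive step, I isolate the outermost index and write
\begin{equation*}
\text{LHS} = \frac{1}{n^N}\sum_{\ell_N=0}^{n-N}\mathcal{L}^{\ell_N+(N-1)}f_N(x)\cdot T_{\ell_N}(x),
\end{equation*}
where $T_{\ell_N}(x)$ denotes the inner nested sum of $\prod_{j=1}^{N-1}\mathcal{L}^{\ell_j+(j-1)}f_j(x)$ over $0\le\ell_1\le\cdots\le\ell_{N-1}\le\ell_N$. Applying the inductive hypothesis to $T_{\ell_N}$ with parameter $n':=\ell_N+(N-1)$ (so that $n'-(N-1)=\ell_N$) gives
\begin{equation*}
T_{\ell_N}(x) = \frac{(\ell_N+N-1)^{N-1}}{(N-1)!}\prod_{j=1}^{N-1}\la f_j,m\ra_{\ell^2(X_0)} + \sum_{\ell=1}^{N-1}(\ell_N+N-1)^{N-1-\ell}A^{(\ell)}_{\ell_N+N-1}(x),
\end{equation*}
with $\|A^{(\ell)}_m\|_{\ell^2(X_0)}=O(1)$ uniformly in $m$.

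Next, for each $k$ I use the spectral decomposition \eqref{decomp} to write $\mathcal{L}^{k}f_N(x)=\la f_N,m\ra_{\ell^2(X_0)}+R_{k}^{(N)}(x)$, where $R_k^{(N)}$ collects the contributions from the non-principal maximal eigenvalues $\alpha_1,\dots,\alpha_{K_0-1}$ and from the $\ell^2_{K_0}(X_0)$-component of $f_N$. By Proposition \ref{ergodic}, the partial sums $\sum_{k=0}^{M}R_k^{(N)}(x)$ are uniformly $O(1)$ in $\ell^2(X_0)$. Expanding the product $\mathcal{L}^{\ell_N+(N-1)}f_N(x)\cdot T_{\ell_N}(x)$ via these two decompositions yields a main contribution from pairing the two constant parts,
\begin{equation*}
\frac{\prod_{j=1}^N\la f_j,m\ra_{\ell^2(X_0)}}{(N-1)!\,n^N}\sum_{\ell_N=0}^{n-N}(\ell_N+N-1)^{N-1}=\frac{1}{N!}\prod_{j=1}^N\la f_j,m\ra_{\ell^2(X_0)}+O(1/n),
\end{equation*}
by the elementary identity $\sum_{k=N-1}^{n-1}k^{N-1}=n^N/N+O(n^{N-1})$.

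Every remaining cross-term carries at least one factor that is either $R^{(N)}_{\ell_N+N-1}(x)$ or an inductive remainder $A^{(\ell)}_{\ell_N+N-1}(x)$. For cross-terms involving $R^{(N)}$ against a polynomial weight $P(k)$ of degree $d$, Abel summation together with the uniform bound on $\sum_{k=0}^{M}R_k^{(N)}$ yields $\|\sum_{k=0}^{n-1}P(k)R_k^{(N)}\|_{\ell^2(X_0)}=O(n^d)$, producing an $O(n^{d-N})$ contribution. For cross-terms involving $A^{(\ell)}$, the pointwise bound $|A^{(\ell)}_m(x)|\le C$ (which is equivalent to the $\ell^2$-norm bound since $V_0$ is finite) gives an $O(n^{-\ell})$ contribution. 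Grouping the resulting pieces by their order in $1/n$ then produces the functions $A[f_1,\dots,f_N]_n^{(\ell)}$, $\ell=1,\dots,N$, with the required uniform $\ell^2(X_0)$-norm bounds, some of which may be chosen to be zero if no contribution of that exact order arises.

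The main technical subtlety will arise in cross-terms of the form $R^{(N)}_{\ell_N+N-1}(x)A^{(\ell)}_{\ell_N+N-1}(x)$, where a direct pointwise bound on both factors gives only the crude rate $O(n^{-\ell})$. This is sufficient for the stated expansion, but any sharpening would require unwinding the inductive construction of $A^{(\ell)}_m$---which is built from oscillatory sums $\sum_{k}\alpha_j^k\phi_j$ (cancelling over a period) and from geometrically decaying terms $\mathcal{L}^k g$ with $g\in\ell^2_{K_0}(X_0)$---so that $\sum_{m}A^{(\ell)}_m(x)$ itself remains bounded. With these structural facts in hand the induction closes.
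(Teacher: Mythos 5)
Your proposal is correct, and it takes a genuinely different route from the paper. The paper proves the proposition by direct computation: it substitutes the three-part spectral decomposition \eqref{decomposition} into each factor $\mathcal{L}^{\ell_j+(j-1)}f_j$, expands the resulting product, and bounds each of the resulting pieces explicitly; this is carried out in full only for $N=2$ (yielding eight terms $I_1,\dots,I_8$), and the general case is asserted to be ``cumbersome but same.''  Your argument is instead an induction on $N$: you isolate the outermost index $\ell_N$, apply the inductive hypothesis to the inner nested sum with the shifted parameter $n'=\ell_N+(N-1)$, decompose only the outermost factor via $\mathcal{L}^k f_N = \langle f_N,m\rangle + R_k^{(N)}$, and control the resulting cross-terms by Abel (summation by parts) against the uniformly bounded partial sums of $R^{(N)}$, together with crude sup-norm bounds on $A^{(\ell)}$.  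Your approach avoids the $3^N$-fold combinatorial explosion that the direct expansion would face and closes the induction for all $N$ in one pass; the paper's approach, by contrast, yields fully explicit formulas \eqref{A1}--\eqref{A2} for $A^{(1)}$ and $A^{(2)}$, which your inductive construction does not produce.  The steps you flag as delicate are indeed so, but your bounds are adequate: the pointwise $O(n^{-\ell})$ on the $R^{(N)}\cdot A^{(\ell)}$ cross-terms is all the proposition requires (since $\|A^{(\ell)}_m\|_{\ell^2(X_0)}=O(1)$ uniformly in $m$, whence $\sup_{m,x}|A^{(\ell)}_m(x)|<\infty$ on the finite set $V_0$, and $\sup_{k,x}|R^{(N)}_k(x)|<\infty$ since $\mathcal{L}$ is a contraction on $\ell^\infty$), and the Faulhaber identity $\sum_{k=N-1}^{n-1}k^{N-1}=n^N/N+O(n^{N-1})$ correctly produces the main term $\frac{1}{N!}\prod_j\langle f_j,m\rangle$ plus admissible $O(n^{-1})$ corrections.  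One minor bookkeeping point you should tighten in a final write-up: the formula at level $N-1$ is applied at all parameter values $n'=\ell_N+N-1\ge N-1$, not just for $n'$ large, so you should note that for the finitely many small values of $\ell_N$ the inner sum $T_{\ell_N}$ is trivially bounded and contributes only $O(n^{-N})$, absorbed into $A^{(N)}_n$.
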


We only give a proof of Proposition \ref{ergodic-iterate}
in the case $N=2$ for the readers' convenience in Appendix, 
because the proof in the case $N \ge 3$ is cumbersome but 
same as the case $N=2$. 

\section{{\bf Validity of the Edgeworth expansion;  a rough observation}}

In this section, we give explicit calculations of all centralized moments of the scaled 
random walks $\{\tau_{n^{-1/2}}(\xi_n)\}_{n=1}^\infty$, where a moment of 
the random walk means the expectation
$$
\mathcal{E}^{x, n, I}_{\Phi_0}:=\mathbb{E}^{x, n}\Big[ \Big(\tau_{n^{-1/2}}\big(\Phi_0(x)^{-1} \bullet \xi_n\big)\Big)^I\Big], 
\qquad x \in V, \, n \in \mathbb{N}, \, I \in \mathcal{I}.
$$
Since the explicit representations of such moments have not been given yet in any references, 
it is worthwhile obtaining them here. 
We emphasize that an extension of the ergodic theorem for the transition operator $\mathcal{L}$ (Proposition \ref{ergodic-iterate}) 
plays a crucial role in the calculations. 
Moreover, we roughly observe the validity of the Edgeworth expansion of the
scaled  random walk by applying the representations of moments, 
which is not rigorous but helpful argument for the readers. 

For a multi-index $I \in \mathcal{I}$, we introduce a function
$F^I_{\Phi_0} : V \to \mathbb{R}$ by
$$
F^I_{\Phi_0}(x):=\sum_{e \in E_x}p(e)\Big(d\Phi_0(e)\Big)^I, \qquad x \in V.
$$
where we recall $d\Phi_0(e):=\Phi_0\big(o(e)\big)^{-1}\bullet \Phi_0\big(t(e)\big)$ 
for $e \in E$. 
Since the function $F^I_{\Phi_0}$ is $\Gamma$-invariant in the sense that 
$F^I_{\Phi_0}(\gamma x)=F^I_{\Phi_0}(x)$ for $\gamma \in \Gamma$ and $x \in V$, 
it can be regarded as a function defined on the base graph $V_0$. 
We also write it for the same symbol $F^I_{\Phi_0} : V_0 \to \mathbb{R}$. 
We put 
$$
m_{\Phi_0}^I:=\sum_{x \in V_0}m(x)F^I_{\Phi_0}(x)=\sum_{e \in E_0}\widetilde{m}(e)
\Big(d\Phi_0(\widetilde{e})\Big)^I, \qquad I \in \mathcal{I}. 
$$
By definition, it is easily seen that 
$$
m_{\Phi_0}^I=\begin{cases}
0 & \text{if }d(I)=1\\
\beta_i(\Phi_0)  & \text{if }I=[i], \, i=d_1+1, d_1+2, \dots, d_1+d_2\\
\sigma_{i}(\Phi_0)\sigma_j(\Phi_0) & \text{if }I=[i]+[j], \, i, j=1, 2, \dots, d_1
\end{cases}.
$$

\subsection{Moments of random walks}
We begin with the cases of low steps, that is,  $d(I) \le 3$. 

\begin{pr}[moments with $d(I) \leq 3$]
\label{phi-moment} 
{\rm (1)} For $I \in \mathcal{I}$ with $d(I)=1$, we have $\mathcal{E}^{x, n, I}_{\Phi_0}=0.$

\vspace{2mm}
\noindent 
{\rm (2)} For $I \in \mathcal{I}$ with $d(I)=2$, we have
$$
\mathcal{E}^{x, n, I}_{\Phi_0}=m^I_{\Phi_0}+\frac{1}{n}A[F^I_{\Phi_0}]_n(x),
$$
where   $A[F^I_{\Phi_0}]_n(x)$ is a function defined by \eqref{decomp}. 

\vspace{2mm}
\noindent 
{\rm (3)} For $I \in \mathcal{I}$ with $d(I)=3$, we have
$$
\begin{aligned}\label{expectation-lower}
&\mathcal{E}^{x, n, I}_{\Phi_0}=
\frac{1}{n^{1/2}}m^I_{\Phi_0}+\frac{1}{n^{3/2}}A[F^I_{\Phi_0}]_n(x).
\end{aligned}
$$
\end{pr}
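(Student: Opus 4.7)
The plan is to convert each moment $\mathcal{E}^{x, n, I}_{\Phi_0}$ into a finite sum of iterated applications of the transition operator $\mathcal{L}$ and then read off the asymptotics from the ergodic theorems established in Section 4. The first step is to use the homogeneity of the dilation in the coordinates of the first kind: since $(\tau_\ve(y))^I = \ve^{d(I)} y^I$ for every $y \in G$ and every multi-index $I \in \mathcal{I}$, one has
$$
\mathcal{E}^{x, n, I}_{\Phi_0} = n^{-d(I)/2}\, \mathbb{E}^{x, n}\bigl[(\eta_n^{(x)})^I\bigr], \qquad \eta_n^{(x)} := d\Phi_0(e_1) \bullet d\Phi_0(e_2) \bullet \cdots \bullet d\Phi_0(e_n).
$$
Iterating the Campbell--Baker--Hausdorff formula \eqref{CBH}, I would expand $(\eta_n^{(x)})^I$ as a finite sum, over $\ell = 1, 2, \ldots, d(I)$ and strictly increasing tuples $1 \le k_1 < \cdots < k_\ell \le n$, of products $C^{I}_{J_1, \ldots, J_\ell} \, d\Phi_0(e_{k_1})^{J_1} \cdots d\Phi_0(e_{k_\ell})^{J_\ell}$ with $d(J_1) + \cdots + d(J_\ell) = d(I)$ and each $d(J_s) \ge 1$, the coefficients being independent of $n$.

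Each expectation would then be evaluated by iterating the Markov identity $\mathbb{E}^{x,n}[d\Phi_0(e_k)^J \mid \mathcal{F}_{k-1}] = F^J_{\Phi_0}(w_{k-1})$ from the largest time index inward, reducing it to a nested expression involving successive powers of $\mathcal{L}$ at the base point $x$ and the auxiliary operator $\mathcal{L}_J f(z) := \sum_{e \in E_z} p(e)\, d\Phi_0(e)^J f(t(e))$. The indispensable input is the centered condition \textbf{(A2)} combined with the modified-harmonic property \eqref{modified harmonic}, which forces $F^{[i]}_{\Phi_0}(z) \equiv 0$ for every vertex $z$ and every weight-one index $i$ (i.e.\ $\sigma_i = 1$). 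Consequently, conditioning from the rightmost factor inward, any product $\mathbb{E}^{x,n}[\prod_s d\Phi_0(e_{k_s})^{J_s}]$ collapses to zero as soon as the outermost multi-index $J_\ell$ has weight $d(J_\ell) = 1$.

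This mechanism already drives most of the case analysis. Case (1) reduces to $\sum_{k=0}^{n-1}\mathcal{L}^{k}F^I_{\Phi_0}(x) = 0$. In case (2), applying Proposition \ref{ergodic} to the $\ell = 1$ contribution yields the claimed $n\, m^I_{\Phi_0} + A[F^I_{\Phi_0}]_n(x)$, while every bilinear cross term has $(d(J_1), d(J_2)) = (1, 1)$ and is annihilated by the outer-conditioning argument. In case (3), the same ergodic-theorem argument produces the main term $n\, m^I_{\Phi_0} + A[F^I_{\Phi_0}]_n(x)$ from $\ell = 1$; the $\ell = 3$ trilinear terms consist of weight-one factors and vanish by the same mechanism; and the $\ell = 2$ bilinear terms with $(d(J_1), d(J_2)) = (2, 1)$ also vanish since $d(J_2) = 1$. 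Dividing $\mathbb{E}^{x, n}[(\eta_n^{(x)})^I]$ by $n^{d(I)/2}$ then yields (1)--(3) up to treatment of the one remaining family of cross terms.

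The main technical obstacle, which I expect to occupy the bulk of the work in case (3), is the control of the bilinear terms with $(d(J_1), d(J_2)) = (1, 2)$, for which the outer-conditioning argument no longer produces a trivial zero. To handle them I would insert the spectral decomposition $\mathcal{L}^m F^{J_2}_{\Phi_0} = m^{J_2}_{\Phi_0} + R_m$ from \eqref{decomposition}, then use $F^{J_1}_{\Phi_0} \equiv 0$ to annihilate the constant piece after applying $\mathcal{L}_{J_1}$, leaving only $\mathcal{L}_{J_1}R_m$. The partial sums $\sum_{m=0}^{M} \mathcal{L}_{J_1}R_m$ remain uniformly bounded in $M$ by the same decomposition, and an Abel-type reorganization of the resulting double sum over $1 \le j < k \le n$ combined with a further application of Propositions \ref{ergodic} and \ref{ergodic-iterate} should collapse this contribution into an $O(1)$ function that is absorbed into the $A[F^I_{\Phi_0}]_n(x)$ remainder of case (3). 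The delicate bookkeeping of this $(1, 2)$ bilinear contribution, and in particular ruling out any uncontrolled $O(n)$ residue, is where the real work of the proof lies.
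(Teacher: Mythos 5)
Your route is genuinely different from the paper's. The paper's proof of this proposition peels off the last step $e_n$ and sets up a one-step recursion, whereas you perform the full Campbell--Baker--Hausdorff expansion over all $n$ increments at once and condition from the outermost time index inward. Both are legitimate ways to organize the calculation, and the homogeneity reduction $\mathcal{E}^{x,n,I}_{\Phi_0}=n^{-d(I)/2}\mathbb{E}^{x,n}[(\eta_n^{(x)})^I]$ is exactly right, as is the observation that {\bf (A2)} together with the modified harmonicity forces $F^J_{\Phi_0}\equiv 0$ whenever $d(J)=1$. Your case analysis for $(1)$ and $(2)$ and for the $(d(J_1),d(J_2))=(2,1)$ and $(1,1,1)$ subcases of $(3)$ is correct and matches the paper's in substance.

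The place where you part ways with the paper is the $(1,2)$ bilinear contribution in case~$(3)$, and here you have put your finger on the genuinely nontrivial point. At this step the paper asserts that $(\Phi_0(x)^{-1}\bullet\xi_{n-1}(c))^J$ and $(d\Phi_0(e))^K$ are independent and factors the expectation into $\bigl\{\sum_c p(c)(\Phi_0(x)^{-1}\bullet\xi_{n-1}(c))^J\bigr\}\mathcal{L}^{n-1}F^K_{\Phi_0}(x)$, which vanishes by Item~$(1)$. But when $d(K)=2$ the function $F^K_{\Phi_0}$ is not constant, and the two factors are only conditionally independent given $w_{n-1}$, not unconditionally: after summing over $e\in E_{t(c)}$ the correct expression is $\sum_c p(c)(\Phi_0(x)^{-1}\bullet\xi_{n-1}(c))^J F^K_{\Phi_0}(t(c))$, which is a covariance, not a product of expectations. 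Your proposal to treat this correlation head-on via the spectral decomposition of $\mathcal{L}$ and an Abel-type rearrangement is the right instinct.

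That said, the proposal does not resolve the step it identifies as the hard one, and the claim that the $(1,2)$ contribution will ``collapse to $O(1)$'' and be absorbed into $A[F^I_{\Phi_0}]_n(x)$ needs justification that is not supplied. Writing $\widetilde{G}_m(y):=\sum_{e\in E_y}p(e)(d\Phi_0(e))^{J_1}\bigl(\mathcal{L}^m F^{J_2}_{\Phi_0}\bigr)(t(e))$, the $(1,2)$ cross term in $\mathbb{E}^{x,n}[(\eta_n^{(x)})^I]$ is $\sum_{m=0}^{n-2}\sum_{j=0}^{n-2-m}\mathcal{L}^j\widetilde{G}_m(x)$; applying Proposition~\ref{ergodic} to the inner sum produces $(n-1-m)\la \widetilde{G}_m, m\ra + O(1)$, so the whole thing is $O(n)$ unless the stationary averages $\la\widetilde{G}_m,m\ra=\sum_{e\in E_0}\widetilde{m}(e)\bigl(d\Phi_0(\widetilde e)\bigr)^{J_1}\bigl(\mathcal{L}^m F^{J_2}_{\Phi_0}\bigr)(t(e))$ vanish. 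That vanishing is not an automatic consequence of modified harmonicity, which controls sums over $E_{o(e)}$, not over $E_{t(e)}$. An $O(n)$ residue here would contribute at the same $n^{-1/2}$ order as the leading term $n^{-1/2}m^I_{\Phi_0}$, so it is not a remainder issue that can be hidden in $A[\cdot]_n$; the crux of part~$(3)$ is precisely to show this sum is $O(1)$, and your proposal stops short of doing so.
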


\begin{proof} (1) By the modified harmonicity of $\Phi_0$, we easily have
$$
\begin{aligned}
\mathcal{E}^{x, n, I}_{\Phi_0}
&=\frac{1}{n^{1/2}}\sum_{c \in \Omega_{x, n-1}(X)}p(c) \Big(\Phi_0(x)^{-1}\bullet\xi_{n-1}(c)\Big)^{I}\\
&\hspace{1cm}+\frac{1}{n^{1/2}}\sum_{e \in E_{t(c)}}p(e)\log\Big(d\Phi_0(e)\Big)\Big|_{\mathfrak{a}^{I}}=\cdots=0. 
\end{aligned}
$$
\noindent
(2) Consider the case where $I=[i], \, i=d_1+1, d_1+2, \dots, d_1+d_2.$
By using 
the Campbell--Baker--Hausdorff formula \eqref{CBH} 
and the modified harmonicity of $\Phi_0$, we have
$$
\begin{aligned}
\mathcal{E}^{x, n, I}_{\Phi_0}
&=\frac{1}{n}\sum_{c \in \Omega_{x, n-1}(X)}p(c)\sum_{e \in E_{t(c)}}p(e)\Big\{ \Big(\Phi_0(x)^{-1}\bullet\xi_{n-1}(c)\Big)^{[i]}+\Big(d\Phi_0(e)\Big)^{[i]}\\
&\hspace{1cm}+\sum_{d(J)=d(K)=1}c_{JK}\Big(\Phi_0(x)^{-1}\bullet\xi_{n-1}(c)\Big)^{J}\Big(d\Phi_0(e)\Big)^{K}\\
&=\frac{1}{n}\sum_{c \in \Omega_{x, n-1}(X)}p(c)\Big(\Phi_0(x)^{-1}\bullet\xi_{n-1}(c)\Big)^{[i]}+\frac{1}{n}\mathcal{L}^{n-1} F^{[i]}_{\Phi_0}(x)
=\cdots=\frac{1}{n}\sum_{k=0}^{n-1}\mathcal{L}^k F^{[i]}_{\Phi_0}(x). 
\end{aligned}
$$
Since the function $F^{[i]}_{\Phi_0}$ is regarded as a function on $X_0$, 
we can apply Proposition \ref{ergodic} to obtain
$$
\frac{1}{n}\sum_{k=0}^{n-1}\LL^k F^{[i]}_{\Phi_0}(x)
=\sum_{x \in V_0}m(x)F^{[i]}_{\Phi_0}(x)+\frac{1}{n}A[F^{[i]}_{\Phi_0}](x)
=m^{[i]}_{\Phi_0}+\frac{1}{n}A[F^{[i]}_{\Phi_0}](x).
$$
As for the case
$I=[i]+[j], \, i, j=1, 2,. \dots, d_1$, the proof can be done 
in the same way as above. 

\vspace{2mm}
\noindent
(3) Equation \eqref{CBH} yields that  
\begin{align}
\mathcal{E}^{x, n, I}_{\Phi_0}&=\frac{1}{n^{3/2}}\sum_{c \in \Omega_{x, n-1}(X)}p(c)\sum_{e \in E_{t(c)}}p(e)\Big\{
  \Big(\Phi_0(x)^{-1}\bullet\xi_{n-1}(c)\Big)^I+\Big(d\Phi_0(e)\Big)^{I} \nn\\
  &\hspace{1cm}+\sum_{\substack{d(J)+d(K)=3 \\ d(J), d(K) \ge 1}}c_{J, K}
  \Big(\Phi_0(x)^{-1}\bullet\xi_{n-1}(c)\Big)^J\Big(d\Phi_0(e)\Big)^{K} \Big\}.
  \label{I=3;eq1}
\end{align}
By noting that the random variables 
$\big(\Phi_0(x)^{-1}\bullet\xi_{n-1}(c)\big)^J$ and $\big(d\Phi_0(e)\big)^{K}$
are independent, we have
$$
\begin{aligned}
&\sum_{c \in \Omega_{x, n-1}(X)}p(c)\sum_{e \in E_{t(c)}}p(e)
\sum_{\substack{d(J)+d(K)=3 \\ d(J), d(K) \ge 1}}c_{J, K}
  \Big(\Phi_0(x)^{-1}\bullet\xi_{n-1}(c)\Big)^J\Big(d\Phi_0(e)\Big)^{K}\\
 &=\sum_{\substack{d(J)+d(K)=3 \\ d(J), d(K) \ge 1}}c_{J, K}\Big\{\sum_{c \in \Omega_{x, n-1}(X)}p(c)\Big(\Phi_0(x)^{-1}\bullet\xi_{n-1}(c)\Big)^J\Big\}
 \LL^{n-1}F^K_{\Phi_0}(x)=0,
\end{aligned}
$$
where we used Item (1). 
Therefore, we inductively have
\begin{align}
\mathcal{E}^{x, n, I}_{\Phi_0}&=\frac{1}{n^{3/2}}\Big\{\sum_{c \in \Omega_{x, n-1}(X)}p(c)\Big(\Phi_0(x)^{-1}\bullet\xi_{n-1}(c)\Big)^I + 
\LL^{n-1}F_{\Phi_0}^I(x)\Big\}\nn\\
&=\frac{1}{n^{3/2}}\sum_{k=0}^{n-1}\LL^k F_{\Phi_0}^I(x). \label{I=3;eq2}
\end{align}
Then, it follows form Proposition \ref{ergodic} that 
\begin{equation}\label{I=3;eq3}
\frac{1}{n}\sum_{k=0}^{n-1}\LL^k F_{\Phi_0}^I(x)
=m^I_{\Phi_0}+\frac{1}{n}A[F^I_{\Phi_0}]_n(x).
\end{equation}
By combining \eqref{I=3;eq1} with \eqref{I=3;eq2} and \eqref{I=3;eq3}, we obtain
$$
\begin{aligned}
&\mathcal{E}^{x, n, I}_{\Phi_0}
=\frac{1}{n^{1/2}}m^I_{\Phi_0}+\frac{1}{n^{3/2}}A[F^I_{\Phi_0}]_n(x),
\end{aligned}
$$
which completes the proof of Proposition \ref{phi-moment}. 
\end{proof}

\vspace{2mm}
We next discuss the cases of higher steps, that is, $d(I) \geq 4$. 
For $N \in \mathbb{N}$, $f_1, f_2, \dots, f_N \in \ell^2(X_0)$ and
sufficiently large $n \geq N$, we put  
$$
\begin{aligned}
&\mathcal{Q}_n^{(N)}[f_1, f_2, \dots, f_N](x)\\
&:=\sum_{\ell_N=0}^{n-N}\cdots\sum_{\ell_2=0}^{\ell_3}
\sum_{\ell_1=0}^{\ell_2}
\mathcal{L}^{\ell_1} f_1(x)\mathcal{L}^{\ell_2+1}f_2(x)\cdots
\LL^{\ell_N+(N-1)}f_N(x), \qquad x \in V_0.
\end{aligned}
$$

By using the Campbell--Baker--Hausdorff formula \eqref{CBH}, we easily 
obtain the following. The proof is so cumbersome but is straightforward as in 
the previous lemma. 

\begin{lm}\label{pre-moment}
 If a multi-index $I$ satisfies $d(I) \geq 4$, we have
\begin{align}
&\mathbb{E}^{x, n}\Big[\Big(\Phi_0(x)^{-1} \bullet \xi_n\Big)^I\Big]\nn\\
&=\mathcal{Q}_n^{(1)}[F^I_{\Phi_0}](x)
+\sum_{q_1=1}^{[d(I)/2]-1}\Big(\sum_{\substack{d(J_1)=2q_1 \\ d(K_1)=d(I)-2q_1 }}+
\sum_{\substack{d(J_1)=2q_1+1 \\ d(K_1)=d(I)-2q_1-1 }}\Big)c_{J_1K_1}\Big(\mathcal{Q}_n^{(2)}[F_{\Phi_0}^{J_1}F_{\Phi_0}^{K_1}](x)\nn\\
&\hspace{1cm}+\sum_{q_2=1}^{[d(J_1)/2]-1}\Big(\sum_{\substack{d(J_2)=2q_2 \\ d(K_2)=d(J_1)-2q_2 }}+
\sum_{\substack{d(J_2)=2q_2+1 \\ d(K_2)=d(J_1)-2q_1-1 }}\Big)c_{J_2K_2}\Big(
\mathcal{Q}_n^{(3)}[F_{\Phi_0}^{J_2}F_{\Phi_0}^{K_2}F_{\Phi_0}^{K_1}](x)+\cdots\nn\\
&\hspace{1cm}+\sum_{q_{[d(I)/2]-1}=1}^{[d(J_{[d(I)/2]-2})/2]-1}
\Big(\sum_{\substack{d(J_{[d(I)/2]-1})=2q_{[d(I)/2]-1} \\ d(K_{[d(I)/2]-1})
=d(J_{[d(I)/2]-1})-2q_{[d(I)/2]-1} }}\nn\\
&\hspace{1cm}
+\sum_{\substack{d(J_{[d(I)/2]-1})=2q_{[d(I)/2]-1}+1 \\ d(K_{[d(I)/2]-1})
=d(J_{[d(I)/2]-1})-2q_{[d(I)/2]-1}-1 }}
\Big)c_{J_{[d(I)/2]-1}K_{[d(I)/2]-1}}\nn\\
&\hspace{1cm}\times 
\mathcal{Q}_n^{([d(I)/2])}[F_{\Phi_0}^{J_{[d(I)/2]-1}}F_{\Phi_0}^{K_{[d(I)/2]-1}}
F_{\Phi_0}^{K_{[d(I)/2]-2}}\cdots F_{\Phi_0}^{K_{1}}](x)\Big)\cdots\Big)\Big)
\label{bullet-expansion}
\end{align}
for $x \in V_0$ and sufficiently large $n \in \mathbb{N}$, 
where the summation $\sum_{d(J)=a, d(K)=1}$ 
is regarded as zero for $a=3, 4, \dots$ due
to the modified harmonicity of $\Phi_0$. 
\end{lm}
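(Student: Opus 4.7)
The plan is to extend the argument of Proposition \ref{phi-moment}\,(3) by iterating the Campbell--Baker--Hausdorff formula \eqref{CBH} and then taking expectations. Starting from conditioning on the last edge $e_n$ and writing $\Phi_0(x)^{-1}\bullet\xi_n = (\Phi_0(x)^{-1}\bullet\xi_{n-1})\bullet d\Phi_0(e_n)$, I expand via \eqref{CBH} and take expectations. The two ``diagonal'' contributions (the one retaining $\xi_{n-1}$ and the purely new-step term) telescope in $n$ and produce the first term $\mathcal{Q}_n^{(1)}[F^I_{\Phi_0}](x) = \sum_{k=0}^{n-1}\mathcal{L}^k F^I_{\Phi_0}(x)$ on the right-hand side of \eqref{bullet-expansion}. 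The genuinely new feature for $d(I)\ge 4$ is that the cross pieces arising from \eqref{CBH} no longer all vanish: those splits $(J_1,K_1)$ with $d(K_1)=1$ are killed by the modified harmonicity/centered condition {\bf (A2)}, which forces $F^{K_1}_{\Phi_0}\equiv 0$, whereas the remaining splits---exactly those with $d(K_1)\ge 2$, indexed by the parity alternatives $d(J_1)=2q_1$ or $d(J_1)=2q_1+1$---survive and must be carried along.

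To handle the surviving cross terms $\mathbb{E}^{x,m}\bigl[(\Phi_0(x)^{-1}\bullet\xi_m)^{J_1}F^{K_1}_{\Phi_0}(w_m)\bigr]$, I will apply the one-step CBH expansion a second time, now only to $(\Phi_0(x)^{-1}\bullet\xi_m)^{J_1}$, while keeping $F^{K_1}_{\Phi_0}(w_m)$ as a ``passive'' factor along the walk. Telescoping in $m$, the new diagonal piece combines with the first time-index to produce $\mathcal{Q}_n^{(2)}[F^{J_1}_{\Phi_0}, F^{K_1}_{\Phi_0}](x)$, while the residual cross pieces inherit exactly the same structure but with $(J_1,K_1)$ replaced by $(J_2,K_2)$ satisfying $d(J_2)+d(K_2)=d(J_1)$ and $d(K_2)\ge 2$. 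These are fed into the next iteration. Since each iteration strictly reduces the weight of the outer multi-index $J_i$ by at least two, the procedure terminates after at most $\lfloor d(I)/2\rfloor$ rounds, which matches the nesting depth appearing in \eqref{bullet-expansion}.

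The principal obstacle is combinatorial rather than analytic: I have to verify that the nested sums over $q_1,\dots,q_{\lfloor d(I)/2\rfloor-1}$ with their parity alternatives exhaust precisely the admissible decompositions $d(J_i)+d(K_i)=d(J_{i-1})$ with $d(K_i)\ge 2$, and that the products of CBH coefficients $c_{J_iK_i}$ accumulated along each chain of splits combine correctly with the $k$-fold time-ordered sums to assemble into the pointwise-product form defining $\mathcal{Q}_n^{(k)}$. The Markov property is applied at every level to turn each one-step conditional expectation of $(d\Phi_0(e))^K$ into $F^K_{\Phi_0}(\cdot)$; this is the mechanism that produces the clean pointwise-product structure, and beyond this bookkeeping, no new analytic tool is required in addition to \eqref{CBH} and the Markov property already used in Proposition \ref{phi-moment}.
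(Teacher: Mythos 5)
Your approach matches the paper's, which itself gives no detailed proof but simply states that Lemma \ref{pre-moment} follows ``by using the Campbell--Baker--Hausdorff formula'' and is ``cumbersome but straightforward as in the previous lemma'' (i.e.\ by iterating the mechanism of Proposition \ref{phi-moment}\,(3): one-step CBH expansion, Markov property to produce $F^K_{\Phi_0}$, telescoping in the time index, and using the modified harmonicity/centered condition to kill the $d(K)=1$ splits). You correctly identify the same iterated scheme and the same residual combinatorial bookkeeping that the paper leaves unspelled, so the two arguments are essentially identical.
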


By using  Proposition \ref{ergodic-iterate} and Lemma \ref{pre-moment}, 
we immediately obtain the following.

\begin{tm}[moments with $d(I) \le 4$]
\label{phi-moment-general}
Let $I$ be a multi-index with $d(I) \geq 4$ and $n \in \mathbb{N}$ sufficiently large. 
Then we have
\begin{align}
\mathcal{E}_{\Phi_0}^{x, n, I}
=\frac{a_{\Phi_0}^{x, n, I}([d(I)/2])}{n^{d(I)/2-[d(I)/2]}}
+\cdots+\frac{a_{\Phi_0}^{x, n, I}(1)}{n^{d(I)/2-1}}
+\frac{a_{\Phi_0}^{x, n, I}(0)}{n^{d(I)/2}}, \qquad x \in V_0,
\label{expectation-expansion}
\end{align}
where each coefficient $a_{\Phi_0}^{x, n, I}(i), \, i=0, 1, 2, \dots, [d(I)/2]$ is given by \eqref{a_0}, \eqref{a_1}
and \eqref{a_l} below. Moreover, it holds that 
\begin{equation}\label{a-estimate}
\|a_{\Phi_0}^{x, n, I}(k)\|=O(1) \quad (n \to \infty),\qquad k=0, 1, 2, \dots, [d(I)/2]. 
\end{equation}
\end{tm}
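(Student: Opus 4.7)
The plan is to combine Lemma \ref{pre-moment}, which expands $\mathbb{E}^{x,n}\bigl[(\Phi_0(x)^{-1}\bullet\xi_n)^I\bigr]$ as a finite sum of expressions $\mathcal{Q}_n^{(N)}[\cdots]$ with $1 \le N \le [d(I)/2]$, with the iterated ergodic theorem (Proposition \ref{ergodic-iterate}), which yields for each such term the expansion
$$
\mathcal{Q}_n^{(N)}[f_1,\ldots,f_N](x) = \frac{n^N}{N!}\prod_{\ell=1}^N \la f_\ell, m\ra_{\ell^2(X_0)} + \sum_{\ell=1}^N n^{N-\ell} A[f_1,\ldots,f_N]^{(\ell)}_n(x),
$$
where each $A^{(\ell)}_n$ is of order $O(1)$. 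The overall scaling factor $n^{-d(I)/2}$ comes directly from the dilation $\tau_{n^{-1/2}}$, since the $i$-th coordinate of weight $\sigma_i$ is rescaled by $n^{-\sigma_i/2}$ and $d(I)=\sigma_1 i_1+\cdots+\sigma_D i_D$; thus $\mathcal{E}_{\Phi_0}^{x,n,I} = n^{-d(I)/2}\,\mathbb{E}^{x,n}\bigl[(\Phi_0(x)^{-1}\bullet\xi_n)^I\bigr]$.

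Substituting the ergodic expansion into each $\mathcal{Q}_n^{(N)}$ appearing in \eqref{bullet-expansion} produces contributions at the orders $n^{N - d(I)/2}, n^{N-1-d(I)/2}, \ldots, n^{-d(I)/2}$. Since $N$ ranges over $\{1, 2, \ldots, [d(I)/2]\}$, the highest order present is $n^{[d(I)/2]-d(I)/2}$, which matches the leading term of \eqref{expectation-expansion}. I would then collect all contributions of a common order $n^{k-d(I)/2}$ into the coefficient $a_{\Phi_0}^{x,n,I}(k)$. The top coefficient $a_{\Phi_0}^{x,n,I}([d(I)/2])$ receives only the leading ergodic terms $\tfrac{n^N}{N!}\prod_\ell \la f_\ell, m\ra$ with $N = [d(I)/2]$, and is therefore an $n$-independent quantity expressed via products of the fundamental moments $m_{\Phi_0}^J$ together with the combinatorial coefficients $c_{J_k K_k}$ of \eqref{bullet-expansion}. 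The lower coefficients $a_{\Phi_0}^{x,n,I}(k)$ for $0 \le k < [d(I)/2]$ interleave such leading products (for smaller $N$) with the $O(1)$-remainders $A^{(\ell)}_n$ coming from the higher iterated ergodic expansions, so the estimate \eqref{a-estimate} follows immediately from Proposition \ref{ergodic-iterate}.

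The main obstacle is not analytic but combinatorial: one has to track the nested sums in \eqref{bullet-expansion} over decompositions $d(J_k) + d(K_k) = d(J_{k-1})$ and merge them carefully with the $N+1$ terms of each ergodic expansion, in order to write down the explicit formulas \eqref{a_0}, \eqref{a_1}, \eqref{a_l}. One must also verify that the odd-weight sub-indices $d(J_k)$, which contribute zero at leading order by Item (1) of Proposition \ref{phi-moment} together with the modified harmonicity of $\Phi_0$, are absorbed into the lower-order $A$-remainders rather than producing fresh leading terms; this is what allows \eqref{expectation-expansion} to remain a clean series in integer powers of $n^{-1}$ (shifted by the single fractional factor $n^{-(d(I)/2-[d(I)/2])}$) instead of accumulating stray half-integer powers.
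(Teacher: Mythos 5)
Your proposal follows exactly the paper's route: substitute the iterated ergodic expansion of Proposition \ref{ergodic-iterate} into each $\mathcal{Q}_n^{(N)}$ appearing in the Campbell--Baker--Hausdorff decomposition of Lemma \ref{pre-moment}, pull out the overall $n^{-d(I)/2}$ from the dilation, and regroup by the power of $n$ to read off the coefficients $a_{\Phi_0}^{x,n,I}(k)$ in \eqref{a_0}--\eqref{a_l}, with \eqref{a-estimate} following from the $O(1)$ bounds on the $A[\cdots]^{(\ell)}_n$ remainders. (The concern about half-integer powers is actually moot: each $\mathcal{Q}_n^{(N)}$ contributes only integer powers $n^N,\ldots,n^0$, so the single fractional shift $n^{-(d(I)/2-[d(I)/2])}$ comes uniformly from the dilation, not from any parity bookkeeping of the $d(J_k)$.)
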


\begin{proof}  It follows from Proposition \ref{ergodic-iterate} 
and Lemma \ref{pre-moment} that 
\begin{align}
\mathcal{E}_{\Phi_0}^{x, n, I}
&=\frac{1}{n^{d(I)/2-1}}\Big(m^I_{\Phi_0}+\frac{1}{n}A[F^I_{\Phi_0}]_n(x)\Big)
+\sum_{q_1=1}^{[d(I)/2]-1}\Big(\sum_{\substack{d(J_1)=2q_1 \\ d(K_1)=d(I)-2q_1 }}+
\sum_{\substack{d(J_1)=2q_1+1 \\ d(K_1)=d(I)-2q_1-1 }}\Big)c_{J_1K_1}\nn\\
&\hspace{1cm}\times\Big(\frac{1}{n^{d(I)/2-2}}
\Big(\frac{1}{2}m^{J_1}_{\Phi_0}m^{K_1}_{\Phi_0}
+\frac{1}{n}A[F^{J_1}_{\Phi_0}, F^{K_1}_{\Phi_0}]_n^{(1)}(x)
+\frac{1}{n^2}A[F^{J_1}_{\Phi_0}, F^{K_1}_{\Phi_0}]_n^{(2)}(x)\Big)+\cdots\nn\\
&\hspace{1cm}+
\sum_{q_{[d(I)/2]-1}=1}^{[d(J_{[d(I)/2]-2})/2]-1}
\Big(\sum_{\substack{d(J_{[d(I)/2]-1})=2q_{[d(I)/2]-1} \\ d(K_{[d(I)/2]-1})=d(J_{[d(I)/2]-1})-2q_{[d(I)/2]-1} }}\nn\\
&\hspace{1cm}+\sum_{\substack{d(J_{[d(I)/2]-1})=2q_{[d(I)/2]-1}+1 \\ d(K_{[d(I)/2]-1})=d(J_{[d(I)/2]-1})-2q_{[d(I)/2]-1}-1 }}\Big)c_{J_{[d(I)/2]-1}K_{[d(I)/2]-1}}\nn\\
&\hspace{1cm}\times  \frac{1}{n^{d(I)/2 - [d(I)/2]}}
\Big(\frac{1}{([d(I)/2])!}
m_{\Phi_0}^{J_{[d(I)/2]-1}}m_{\Phi_0}^{K_{[d(I)/2]-1}}
m_{\Phi_0}^{K_{[d(I)/2]-2}}\cdots m_{\Phi_0}^{K_{1}}\nn\\
&\hspace{1cm}+\sum_{\ell=1}^{[d(I)/2]}\frac{1}{n^\ell}A[F_{\Phi_0}^{J_{[d(I)/2]-1}}, F_{\Phi_0}^{K_{[d(I)/2]-1}}, 
F_{\Phi_0}^{K_{[d(I)/2]-2}}, \cdots, F_{\Phi_0}^{K_{1}}]_n^{(\ell)}(x)\Big)\cdots\Big)\nn\\
&=\frac{a_{\Phi_0}^{x, n, I}([d(I)/2])}{n^{d(I)/2-[d(I)/2]}}
+\cdots+\frac{a_{\Phi_0}^{x, n, I}(1)}{n^{d(I)/2-1}}
+\frac{a_{\Phi_0}^{x, n, I}(0)}{n^{d(I)/2}},\nn
\end{align}
where 
\begin{align}
a_{\Phi_0}^{x, n, I}(0)&=A[F^I_{\Phi_0}]_n(x)+\sum_{q_1}\sum_{J_1, K_1}c_{J_1K_1}\Big(
A[F^{J_1}_{\Phi_0}, F^{K_1}_{\Phi_0}]_n^{(2)}(x)+\cdots\nn\\
&\hspace{1cm}+\sum_{q_{[d(I)/2]-1}}\sum_{J_{[d(I)/2]-1}, K_{[d(I)/2]-1}}
c_{J_{[d(I)/2]-1} K_{[d(I)/2]-1}}\nn\\
&\hspace{1cm}\times
A[F_{\Phi_0}^{J_{[d(I)/2]-1}}, F_{\Phi_0}^{K_{[d(I)/2]-1}}, 
F_{\Phi_0}^{K_{[d(I)/2]-2}}, \cdots, F_{\Phi_0}^{K_{1}}]_n^{([d(I)/2])}(x)\Big)\cdots\Big),
\label{a_0}\\
a_{\Phi_0}^{x, n, I}(1)&=m^I_{\Phi_0}+\sum_{q_1}\sum_{J_1, K_1}c_{J_1K_1}\Big(
A[F^{J_1}_{\Phi_0}, F^{K_1}_{\Phi_0}]_n^{(1)}(x)+\cdots\nn\\
&\hspace{1cm}+\sum_{q_{[d(I)/2]-1}}\sum_{J_{[d(I)/2]-1}, K_{[d(I)/2]-1}}
c_{J_{[d(I)/2]-1} K_{[d(I)/2]-1}}\nn\\
&\hspace{1cm}\times
A[F_{\Phi_0}^{J_{[d(I)/2]-1}}, F_{\Phi_0}^{K_{[d(I)/2]-1}}, 
F_{\Phi_0}^{K_{[d(I)/2]-2}}, \cdots, F_{\Phi_0}^{K_{1}}]_n^{([d(I)/2])}(x)\Big)\cdots\Big),
\label{a_1}
\end{align}
and 
\begin{align}
a_{\Phi_0}^{x, n, I}(\ell)&=\sum_{q_1}\sum_{J_1, K_1}c_{J_1K_1}\Big(\cdots
\sum_{q_{\ell-1}}\sum_{J_{\ell-1}, K_{\ell-1}}c_{J_{\ell-1}K_{\ell-1}}\Big(
\frac{1}{\ell!}m^{J_{\ell-1}}_{\Phi_0}m^{K_{\ell-1}}_{\Phi_0}m^{K_{\ell-2}}_{\Phi_0}\cdots
m^{K_{1}}_{\Phi_0}\nn\\
&\hspace{1cm}+\sum_{q_{\ell}}\sum_{J_{\ell}, K_{\ell}}c_{J_\ell K_\ell}
\Big(A[F_{\Phi_0}^{J_{\ell}}, F_{\Phi_0}^{K_{\ell}}, 
F_{\Phi_0}^{K_{\ell-1}}, \cdots, F_{\Phi_0}^{K_{1}}]_n^{(1)}(x)+\cdots\nn\\
&\hspace{1cm}+\sum_{q_{[d(I)/2]-1}}\sum_{J_{[d(I)/2]-1}, K_{[d(I)/2]-1}}
c_{J_{[d(I)/2]-1} K_{[d(I)/2]-1}}\nn\\
&\hspace{1cm}\times
A[F_{\Phi_0}^{J_{[d(I)/2]-1}}, F_{\Phi_0}^{K_{[d(I)/2]-1}}, 
F_{\Phi_0}^{K_{[d(I)/2]-2}}, \cdots, F_{\Phi_0}^{K_{1}}]_n^{([d(I)/2]-\ell)}(x)\Big)\Big)\cdots\Big)
\label{a_l}
\end{align}
for $\ell=2, 3, \dots, [d(I)/2]$. Equation \eqref{a-estimate} is then clearly obtained 
by definition.  
This completes the proof of Lemma \ref{phi-moment-general}. 
\end{proof}

\subsection{Validity of the Edgeworth expansion via a rough observation }

In this subsection, we intuitively observe that the Edgeworth expansion of the scaled random walk 
$\{\tau_{n^{-1/2}}(\xi_n)\}_{n=1}^\infty$ is valid  
by using Proposition \ref{phi-moment} and Theorem \ref{phi-moment-general}. 
They are roughly obtained by the direct application of the stratified Taylor expansion formula to each expectations
$\LL^n \mathcal{P}^H_{n^{-1/2}}f$ and $\mathcal{P}^H_{n^{-1/2}}\e^{\A(\Phi_0)} f$. 

In what follows, we suppose that the function $f : G \to \mathbb{R}$ is bounded and analytic.  
For the Gaussian semigroup $(\nu^{\Phi_0}_t)_{t \ge 0}$ associated with $\A(\Phi_0)$, 
we write the corresponding Gaussian measure as $\nu=\nu_1^{\Phi_0}$. 
We apply the right stratified Taylor formula to
the function $f$ at 
$\tau_{n^{-1/2}}(\Phi_0(x))$. Then we have the Taylor series expansion
$$
\begin{aligned}
\mathcal{L}^n\mathcal{P}_{n^{-1/2}}^H f(x)
&=\mathbb{E}^{x, n}\Big[f\Big(\tau_{n^{-1/2}}\big(\xi_n)\Big)\Big]\\
&=\mathbb{E}^{x, n}\Big[f\Big(\tau_{n^{-1/2}}\Big(\Phi_0(x) \bullet \big(\Phi_0(x)^{-1}\bullet \xi_n\big)\Big)\Big)\Big]\\
&=\sum_{d(I)=1}^\infty \widehat{S}^I f\Big(\tau_{n^{-1/2}}\big(\Phi_0(x)\big)\Big)\mathcal{E}^{x, n, I}_{\Phi_0}, \qquad x \in V, \, n \in \mathbb{N}. 
\end{aligned}
$$
On the other hand, it also follows from the right stratified Taylor formula that
$$
\begin{aligned}
\mathcal{P}^H_{n^{-1/2}}\e^{\A(\Phi_0)}f(x)
&=\mathbb{E}^\nu\Big[ f\Big(\tau_{n^{-1/2}}(\Phi_0(x)) \bullet \mathcal{X}_1\big)\Big)\Big]\\
&=\sum_{d(I)=1}^\infty \widehat{S}^I f\Big(\tau_{n^{-1/2}}\big(\Phi_0(x)\big)\Big)
\mathbb{E}^\nu[(\mathcal{X}_1)^I]\\
&=\sum_{d(I)=1}^\infty \widehat{S}^I f\Big(\tau_{n^{-1/2}}\big(\Phi_0(x)\big)\Big)
m_\nu^I, \qquad x \in V, \, n \in \mathbb{N},
\end{aligned}
$$
where $(\mathcal{X}_t)_{t \ge 0}$ denotes the $G$-valued diffusion process whose 
infinitesimal generator is given by $\A$. 
Then, for $x \in V$ and $n \in \mathbb{N}$, we have
\begin{align}
&\mathcal{L}^n\mathcal{P}_{n^{-1/2}}^H f(x) - 
\mathcal{P}^H_{n^{-1/2}}\e^{\A(\Phi_0)}f(x)
=\sum_{d(I)=1}^\infty \widehat{S}^I f\Big(\tau_{n^{-1/2}}\big(\Phi_0(x)\big)\Big)
(\mathcal{E}^{x, n, I}_{\Phi_0} - m_\nu^I).
\label{dif-expansion}
\end{align}

Let us consider the difference 
$\mathcal{Z}^{x, n, I}:=\mathcal{E}^{x, n, I}_{\Phi_0} - m_\nu^I$ for
$x \in V, \, n \in \mathbb{N}$ and $I \in \mathcal{I}$, 
appearing on the right-hand side of \eqref{dif-expansion}. 
At first, suppose that $d(I)=1$. Then Lemma \ref{phi-moment}-(1) 
and $m^I_\nu=0$
immediately imply that $\mathcal{Z}^{x, n, I}=0$. 
We next consider the case where $d(I)=2$. 
By applying Lemma \ref{phi-moment}-(2) and an identity 
$m^I_{\Phi_0}=m^I_\nu$ for $d(I)=2$, 
we have
$$
\mathcal{Z}^{x, n, I}=\frac{1}{n}A[F^I_{\Phi_0}]_n(x). 
$$
If the multi-index $I$ satisfies $d(I)=3$, it follows from Lemma \ref{phi-moment}-(3) and $m^I_\nu=0$ that
$$
\mathcal{Z}^{x, n, I}=\frac{1}{n^{1/2}}m^I_{\Phi_0}+\frac{1}{n^{3/2}}A[F^I_{\Phi_0}]_n(x).
$$
Next suppose that $d(I)$ is odd and $d(I) \ge 5$. 
By using Lemma \ref{phi-moment-general} and $m^I_\nu=0$, we have
$$
\mathcal{Z}^{x, n, I}=\frac{a_{\Phi_0}^{x, n, I}\big((d(I)-1)/2\big)}{n^{1/2}}
+\cdots+\frac{a_{\Phi_0}^{x, n, I}(1)}{n^{d(I)/2-1}}+\frac{a_{\Phi_0}^{x, n, I}(0)}{n^{d(I)/2}}.
$$
The rest is the case where $d(I)$ is even and $d(I) \ge 4$. 
The again use of Lemma \ref{phi-moment-general} implies that
$$
\mathcal{Z}^{x, n, I}=a_{\Phi_0}^{x, n, I}(d(I)/2)+\frac{a_{\Phi_0}^{x, n, I}(d(I)/2-1)}{n}+\cdots+
\frac{a_{\Phi_0}^{x, n, I}(1)}{n^{d(I)/2-1}}+\frac{a_{\Phi_0}^{x, n, I}(0)}{n^{d(I)/2}}-m^I_\nu.
$$
By noting $m^I_{\Phi_0}=m^I_\nu$ for a multi-index $I$ with $d(I)=2$, we have
$$
\begin{aligned}
a_{\Phi_0}^{x, n, I}(d(I)/2)&=\sum_{\substack{d(J_1)=d(I)-2 \\ d(K_1)=2}}C_{J_1K_1}
\sum_{\substack{d(J_2)=d(I)-4 \\ d(K_2)=2}}C_{J_2K_2}
\cdots\sum_{\substack{d(J_{d(I)/2-1})=2 \\ 
d(K_{d(I)/2-1})=2}}C_{J_{d(I)/2-1}K_{d(I)/2-1}}\\
&\hspace{1cm}\times \frac{1}{(d(I)/2)!}m_{J_{d(I)/2-1}}^{\Phi_0}
m^{K_{d(I)/2-1}}_{\Phi_0}m^{K_{d(I)/2-2}}_{\Phi_0}\cdots m^{K_{1}}_{\Phi_0}\\
&=\frac{1}{(d(I)/2)!}\sum_{d(J_1)=\cdots=d(J_{d(I)/2})=2}C_{J_1\dots J_{d(I)/2}}
m^{J_1}_{\nu}\cdots m^{J_{d(I)/2}}_{\nu}=m^I_\nu,
\end{aligned}
$$
where we used \eqref{I-4} for the final line. Therefore, we have
$$
\mathcal{Z}^{x, n, I}=\frac{a_{\Phi_0}^{x, n, I}(d(I)/2-1)}{n}+\cdots+
\frac{a_{\Phi_0}^{x, n, I}(1)}{n^{d(I)/2-1}}+\frac{a_{\Phi_0}^{x, n, I}(0)}{n^{d(I)/2}}.
$$
By combining \eqref{dif-expansion} with the calculations of 
$\mathcal{Z}^{x, n, I}$ above, 
we obtain
\begin{align}
\mathcal{L}^n\mathcal{P}_{n^{-1/2}}^H f(x) - 
\mathcal{P}^H_{n^{-1/2}}\e^{\A}f(x)
&=\sum_{d(I)=1}^\infty \widehat{S}^I f\Big(\tau_{n^{1/2}}\big(\Phi_0(x)\big)\Big)
\mathcal{Z}^{x, n, I}=\sum_{j=1}^\infty \frac{c_j}{n^{j/2}},
\label{pre-exp}
\end{align}
where each coefficient $c_j^n=c_j^n(x, f, p, \Phi_0), \, j=1, 2, \dots$, is given in the following way. 
We put 
$\mathcal{S}_k:=\{k, \, k+2, \, k+4, \dots\}$ for $k=1, 2, \dots$.  
Then one has
\begin{align}
c_1^n &= \sum_{d(I)=3}\widehat{S}^I f\Big( \tau_{n^{-1/2}}\big(\Phi_0(x)\big)\Big)
m^I_{\Phi_0}\nn\\
&\hspace{1cm}+\sum_{s \in \mathcal{S}_5}\sum_{d(I)=s} 
\widehat{S}^{I}f\Big( \tau_{n^{-1/2}}\big(\Phi_0(x)\big)\Big)
a_{\Phi_0}^{x, n, I}\Big(\frac{s-1}{2}\Big), \\
c_2^n &= \sum_{d(I)=2} \widehat{S}^{I}f\Big( \tau_{n^{-1/2}}\big(\Phi_0(x)\big)\Big)
A[F^I_{\Phi_0}]_n(x)\nn\\
&\hspace{1cm}+\sum_{s \in \mathcal{S}_4}\sum_{d(I)=s} 
\widehat{S}^{I}f\Big( \tau_{n^{-1/2}}\big(\Phi_0(x)\big)\Big)
a_{\Phi_0}^{x, n, I}\Big(\frac{s-2}{2}\Big),\\
c_3^n &= \sum_{d(I)=3} 
\widehat{S}^{I}f\Big( \tau_{n^{-1/2}}\big(\Phi_0(x)\big)\Big)
A[F^I_{\Phi_0}]_n(x)\nn\\
&\hspace{1cm}+\sum_{s \in \mathcal{S}_5}\sum_{d(I)=s}
\widehat{S}^{I}f\Big( \tau_{n^{-1/2}}\big(\Phi_0(x)\big)\Big)
a_{\Phi_0}^{x, n, I}\Big(\frac{s-3}{2}\Big),
\end{align}
and 
\begin{align}
c_j^n &= \sum_{s \in \mathcal{S}_j}\sum_{d(I)=s} 
\widehat{S}^{I}f\Big( \tau_{n^{-1/2}}\big(\Phi_0(x)\big)\Big)
a_{\Phi_0}^{x, n, I}\Big(\frac{s-k}{2}\Big), \qquad j=4, 5, \dots. 
\end{align}
Though each coefficient $c_j^n=c_j^n(x, f, p, \Phi_0), \, j=1, 2, 3, \dots$, 
depends on the choice of $n \in \mathbb{N}$, we can still expect that 
each $c^n_j$ satisfies that $\|c^n_j\|=O(1)$ as $n \to \infty$
for $j=1, 2, 3, \dots$, by applying \eqref{a-estimate}. 
 Thus,  
Equation \eqref{pre-exp} should imply the possibility of the validity of 
the Edgeworth expansion of the scaled random 
walk $\{\tau_{n^{-1/2}}(\xi_n)\}_{n=1}^\infty$. 
However, the argument in this subsection does not imply
the rigorous proof of Theorem \ref{main}. Indeed, 
several technical difficulties appear when we deal with  the remainder terms 
of the Taylor expansions of both $\mathcal{L}^n\mathcal{P}_{n-^{-1/2}}^H f$
and $\mathcal{P}_{n^{-1/2}}^H \e^{\A(\Phi_0)} f$. 
Therefore, we need another technical approach for the proof, while the observation above must be important.

\section{{\bf Proof of Theorem \ref{main}}}

The aim of this section is to give a proof of Theorem \ref{main}, 
the main result of the present paper.

\subsection{Proof of Theorem \ref{main}}
Let $x \in V$ and $\{W_i\}_{i=1}^\infty$ be a sequence of random variables on $\Omega_x(X)$ 
with values in $G$ 
defined by
$W_i(c):=d\Phi_0(e_i)$
for $c=(e_1, e_2, \dots, e_i, \dots) \in \Omega_x(X)$. 
Note that $\{W_i\}_{i=1}^\infty$ is independent but
 not identically distributed in general by definition and satisfies that 
 $$
 w_n=x \bullet W_1 \bullet W_2 \bullet \cdots \bullet W_n, \qquad n=0, 1, 2, \dots.
 $$
We put 
$\mathbb{P}_m:=\sum_{x \in V_0}\mathbb{P}_xm(x)$ and 
$\mu^{(i)}:=\mathbb{P}_m \circ (W_i)^{-1}$, $i=1, 2, \dots, $
which is the image probability measure of $\mathbb{P}_m$. 
Then we know that the law of each $w_n$, $n=1, 2, \dots$, is written by
the convolution power
$\mu^{(1)} * \mu^{(2)} * \cdots * \mu^{(n)}$. Moreover, we observe 
$$
m^I_{\mu^{(k)}}=\int_G x^I \, \mu^{(k)}(dx)=\sum_{e \in E_0}\widetilde{m}(e)
\Big(d\Phi_0(e)\Big)^I=m^I_{\Phi_0}, \qquad I \in \mathcal{I}, \, k=1, 2, \dots,
$$
and $m_{\mu^{(k)}}^N<\infty$ for $k, N=1, 2, \dots$. 
We now give a proof of Theorem \ref{main} only in the case of $t=1$,
since general cases are shown similarly to the case of $t=1$.

\begin{proof}[Proof of Theorem \ref{main}]

We basically follows the argument given by \cite{BP} and \cite{Pap2},
although we need a careful examination to follow it.  Indeed,
random walks discussed in both \cite{BP} and \cite{Pap2} are independently and 
identically distributed, while our random walks are not always so. 
Moreover, there is a geometric constraint in terms of 
the modified harmonicity of the realization $\Phi_0$  in our setting. 
The basic idea for the proof is to make use of the identity
\begin{align}
&\tau_{n^{-1/2}}(\mu^{(1)} * \mu^{(2)} * \cdots * \mu^{(n)}) \nn\\
&=\nu+\sum_{k=1}^n \tau_{n^{-1/2}}\nu^{k-1}*(\tau_{n^{-1/2}}\mu^{(k)} - \tau_{n^{-1/2}}\nu)*\tau_{n^{-1/2}}(\mu^{(k+1)}*\cdots*\mu^{(n)})
\label{Proof; measure-decomp}
\end{align}
for $n=1, 2, \dots$, where we again recall that $\nu=\nu_1^{\Phi_0}$ is the centered Gaussian measure corresponding 
to $(\nu_t^{\Phi_0})_{t \ge 0}$, satisfying 
$\tau_{n^{-1/2}}\nu^{*n}=\nu$. 
 In what follows, we always omit the convolution symbol $*$ for the simplicity of notations. 
By using \eqref{Proof; measure-decomp}, we have
\begin{align}
&\mathcal{L}^n\mathcal{P}_{n^{-1/2}}^Hf(x) 
- \mathcal{P}^H_{n^{-1/2}}\e^{\mathcal{A}(\Phi_0)}f(x) \nn\\
&=\sum_{k=1}^n \iiint_{G^3} f(x \bullet x_1 \bullet y_1 \bullet x_2) 
 \tau_{n^{-1/2}}\nu^{k-1}(dx_1)(\tau_{n^{-1/2}}\mu^{(k)} - \tau_{n^{-1/2}}\nu)(dy_1)\nn\\
 &\hspace{1cm}\times\tau_{n^{-1/2}}(\mu^{(k+1)}\cdots\mu^{(n)})(dx_2), 
\qquad f \in C_b^{3(N+1)r}(G), \, x \in V. 
\end{align}
We put 
$$
\begin{aligned}
\mathcal{I}_1&:=\sum_{k=1}^n\iiint_{|y_1| \ge 1} 
f(x \bullet x_1 \bullet y_1 \bullet x_2) \\
&\hspace{1cm}\times
\tau_{n^{-1/2}}\nu^{k-1}(dx_1)(\tau_{n^{-1/2}}\mu^{(k)} - \tau_{n^{-1/2}}\nu)(dy_1)\tau_{n^{-1/2}}(\mu^{(k+1)}\cdots\mu^{(n)})(dx_2), \\
\mathcal{I}_2&:=\sum_{k=1}^n\iiint_{|y_1| < 1} 
f(x \bullet x_1 \bullet y_1 \bullet x_2) \\
&\hspace{1cm}\times
\tau_{n^{-1/2}}\nu^{k-1}(dx_1)(\tau_{n^{-1/2}}\mu^{(k)} - \tau_{n^{-1/2}}\nu)(dy_1)\tau_{n^{-1/2}}(\mu^{(k+1)}\cdots\mu^{(n)})(dx_2).
\end{aligned}
$$

We split the proof into seven steps. 

\vspace{2mm}
\noindent
{\bf Step 1.} It follows from \eqref{truncation} that 
$$
\begin{aligned}
|\mathcal{I}_1| &\le \frac{1}{n}\|f\|_\infty \sum_{k=1}^n \Big( \int_{|y_1| \ge 1} \tau_{n^{-1/2}}\mu^{(k)}(dy_1)
+ \int_{|y_1| \ge 1}\tau_{n^{-1/2}}\nu(dy_1)\Big)\\
&= \frac{1}{n^2}\|f\|_\infty  \sum_{k=1}^n\big(\Lambda_0(\mu^{(k)})+\Lambda_0(\nu)\big) \\
&\le \frac{1}{n^2}\|f\|_\infty n^{-N/2}
\sum_{k=1}^n (m_{\mu^{(k)}}^{N+2}+m_\nu^{N+2})=O(n^{-N/2}),
\end{aligned}
$$
which means that the contribution of  $\mathcal{I}_1$ is nothing but up to $O(n^{-N/2})$.

\vspace{2mm}
\noindent
{\bf Step 2.} At the rest steps, we concentrate on looking into the integral $\mathcal{I}_2$. On the set $\{|y_1| <1\}$, 
we start with applying the right stratified Tayler formula (Lemma \ref{Taylor})
up to the order $N+1$ to the function $f(x \bullet x_1 \bullet y_1 \bullet x_2)$
at $\bm{1}_G$ with respect to $y_1$. 
Then, for $x_1, x_2 \in G$, we have 
\begin{equation}\label{Proof; Taylor1}
f(x \bullet x_1 \bullet y_1 \bullet x_2)=\sum_{d(I_1) \le N+1} 
\widehat{S}^{I_1}_{(2)}f(x \bullet x_1 \bullet \bm{1}_G \bullet x_2)
y_1^{I_1} + R^f_{N+2}(x_1, y_1, x_2)
\end{equation}
with the estimate 
\begin{align}
&| R^f_{N+2}(x_1, y_1, x_2)| \nn\\
&\le C(G)|y_1|^{N+2}
\sup\big\{ |\widehat{\mathfrak{a}}^{I_1}f(x \bullet x_1 \bullet y_1' \bullet x_2)| \, 
: \, d(I_1)=N+2, \, |y_1'| \le b(G)^{N+2}|y_1|\big\}. \nn
\end{align}
By applying Lemma \ref{estimate of f}, one can give the estimation
\begin{equation}\label{Proof; Remainder1}
| R^f_{N+2}(x_1, y_1, x_2)| \le C(G)|y_1|^{N+2}
(1+|x \bullet x_1|^{(N+2)(r-1)})\|D^{(N+2)r}f\|_\infty.
\end{equation}
Therefore, \eqref{truncation} and \eqref{Proof; Remainder1} yield
$$
\begin{aligned}
&\Big| \sum_{k=1}^n \iiint_{|y_1|<1}R^f_{N+2}(x_1, y_1, x_2) \,
\tau_{n^{-1/2}}\nu^{k-1}(dx_1)\nn\\
&\hspace{1cm}
(\tau_{n^{-1/2}}\mu^{(k)} - \tau_{n^{-1/2}}\nu)(dy_1)
\tau_{n^{-1/2}}(\mu^{(k+1)}\cdots\mu^{(n)})(dx_2)\Big|\\
&\le \frac{1}{n}C(G) \|D^{(N+2)r}f\|_\infty 
\sum_{k=1}^n \Big(\frac{1}{n}\big( L_{N+2}(\mu^{(k)})+L_{N+2}(\nu) \big)\Big)\nn\\
&\hspace{1cm}
\times(1+|x|^{(N+2)(r-1)}+m_{\nu^{k-1}}^{(N+2)(r-1)})\\
&\le \frac{1}{n^2}C(G) \|D^{(N+2)r}f\|_\infty 
\sum_{k=1}^n n^{-N/2}(m^{N+2}_{\mu^{(k)}}+m^{N+2}_\nu)\nn\\
&\hspace{1cm}
\times(1+|x|^{(N+2)(r-1)}+m_{\nu^{k-1}}^{(N+2)(r-1)})=O(n^{-N/2}),
\end{aligned}
$$
where we have shown that the contribution of the remainder term in \eqref{Proof; Taylor1} is up to $O(n^{-N/2})$. 

\vspace{2mm}
\noindent
{\bf Step 3.} We split the summation on the right-hand side of \eqref{Proof; Taylor1} into 
$$
\sum_{d(I_1) \le N+1} \widehat{S}^{I_1}_{(2)}f(x \bullet x_1 \bullet \bm{1}_G \bullet x_2)
y_1^{I_1} =\Big(\sum_{d(I_1) \le 2}+\sum_{3 \le d(I_1) \le N+1}\Big)
\widehat{S}^{I_1}_{(2)}f(x \bullet x_1 \bullet \bm{1}_G \bullet x_2)y_1^{I_1}.
$$
Let us consider the case $d(I_1) \le 2$. 
Since it holds that $m_{\mu^{(k)}}^{I_1}=m_\nu^{I_1}$
for $k=1, 2, \dots, n$ when $d(I_1)=1$, we have
$$
\begin{aligned}
&\Big|\int_{|y_1|<1}y_1^{I_1} \, 
(\tau_{n^{-1/2}}\mu^{(k)}-\tau_{n^{-1/2}}\nu\big)(dy_1)\Big|\\
&=
\Big|\int_{|y_1| \ge 1}y_1^{I_1} \, 
(\tau_{n^{-1/2}}\mu^{(k)}-\tau_{n^{-1/2}}\nu\big)(dy_1)\Big|
\le \frac{1}{n}\big(\Lambda_2(\mu^{(k)})+\Lambda_2(\nu)\big)
\end{aligned}
$$
for $k=1, 2, \dots, n$. 
Therefore, Lemma \ref{estimate of f} and \eqref{truncation} imply that 
$$
\begin{aligned}
&\Big| \sum_{k=1}^n \sum_{d(I_1) \le 2}\iiint_{|y_1|<1} 
\widehat{S}^{I_1}_{(2)}f(x \bullet x_1 \bullet \bm{1}_G \bullet x_2)y_1^{I_1}  \\
&\hspace{1cm}\times \tau_{n^{-1/2}}\nu^{k-1}(dx_1)
(\tau_{n^{-1/2}}\mu^{(k)} - \tau_{n^{-1/2}}\nu)(dy_1)
\tau_{n^{-1/2}}(\mu^{(k+1)}\cdots\mu^{(n)})(dx_2)\Big|\\
&\le \frac{1}{n}C(G) \|D^{2r}f\|_\infty 
\sum_{k=1}^n \sum_{d(I_1) \le 2}\Big(\frac{1}{n}
\big( \Lambda_{2}(\mu^{(k)})+\Lambda_{2}(\nu) \big)\Big)\nn\\
&\hspace{1cm}
\times(1+|x|^{2(r-1)}+m_{\nu^{k-1}}^{2(r-1)})\\
&\le \frac{1}{n^2}C(G) \|D^{2r}f\|_\infty 
\sum_{k=1}^n \sum_{d(I_1) \le 2}n^{-N/2}(m^{N+2}_{\mu^{(k)}}+m^{N+2}_\nu)\nn\\
&\hspace{1cm}
\times(1+|x|^{2(r-1)}+m_{\nu^{k-1}}^{2(r-1)})=O(n^{-N/2}).
\end{aligned}
$$

\noindent
{\bf Step 4.} We consider the case where $3 \le d(I_1) \le N+1$. 
In the same way as \eqref{Proof; measure-decomp}, we have
\begin{align}
&\sum_{k=1}^n \tau_{n^{-1/2}}\nu^{k-1}
(\tau_{n^{-1/2}}\mu^{(k)} - \tau_{n^{-1/2}}\nu)
\tau_{n^{-1/2}}(\mu^{(k+1)}\cdots\mu^{(n)}) \nn\\
&=\sum_{k=1}^n \tau_{n^{-1/2}}\nu^{k-1}(\tau_{n^{-1/2}}\mu^{(k)} - \tau_{n^{-1/2}}\nu)
\tau_{n^{-1/2}}\nu^{n-k}\nn\\
&\hspace{1cm}
+\sum_{k=1}^n \sum_{\ell=1}^{n-k}\tau_{n^{-1/2}}\nu^{k-1}(\tau_{n^{-1/2}}\mu^{(k)} - \tau_{n^{-1/2}}\nu) \nn\\
&\hspace{1cm}\times \tau_{n^{-1/2}}\nu^{\ell-1}
(\tau_{n^{-1/2}}\mu^{(k+\ell)}-\tau_{n^{-1/2}}\nu)\tau_{n^{-1/2}}
(\mu^{(k+\ell+1)}\cdots\mu^{(n)})\nn\\
&=:\mathfrak{M}_{1}^{(n)}+\mathfrak{M}_{2}^{(n)}. 
\label{Proof; Measure-Decomp2}
\end{align}
Then we obtain
$$
\begin{aligned}
&\Big|  \sum_{3 \le d(I_1) \le N+1}
\iiint\hspace{-2.5mm}\iint_{|y_1|<1, |y_2| \ge 1}
\widehat{S}^{I_1}_{(2)}f(x \bullet x_1 \bullet \bm{1}_G \bullet x_2 \bullet y_2 \bullet x_3)\nn\\
&\hspace{1cm}\times y_1^{I_1}\,
\mathfrak{M}_{2}^{(n)}(dx_1dy_1dx_2dy_2dx_3)\Big| \\
&\le \frac{1}{n^{3/2}}C(G)\sum_{k=1}^n \sum_{\ell=1}^{n-k}\sum_{3 \le d(I_1) \le N+1}\|D^{d(I_1)N}f\|_\infty
\Big(\frac{1}{n}\big( L_{d(I_1)}(\mu^{(k)})+L_{d(I_1)}(\nu)\big)\Big)\nn\\
&\hspace{1cm}\times
\Big(\frac{1}{n}\big( \Lambda_0(\mu^{(k+\ell)})+\Lambda_0(\nu)\big)\Big)\\
&\le \frac{1}{n^{7/2}}C(G)\sum_{k=1}^n \sum_{\ell=1}^{n-k}\sum_{3 \le d(I_1) \le N+1}\|D^{d(I_1)N}f\|_\infty n^{-(d(I_1)-2)/2}n^{-N/2}\nn\\
&\hspace{1cm}\times (m^{d(I_1)}_{\mu^{(k)}}+m^{d(I_1)}_{\nu})
(m_{\mu^{k+\ell}}^{N+2}+m_{\nu}^{N+2})\\
&=O(n^{-N/2})
\end{aligned} 
$$
by applying Lemma \ref{estimate of f} and \eqref{truncation}. 

\vspace{2mm}
\noindent
{\bf Step 5.} 
On the set $\{|y_1| <1, \, |y_2|<1\}$, 
we apply the right stratified Tayler formula 
up to the order $N+3-d(I_1)$ to the function 
$\widehat{S}^{I_1}_{(2)}f(x \bullet x_1 \bullet \bm{1}_G \bullet x_2 \bullet y_2 \bullet x_3)$
at $\bm{1}_G$ with respect to the variable $y_2$. 
Then, for fixed $x_1, x_2, x_3 \in G$, we have 
\begin{align}\label{Proof; Taylor2}
&\widehat{S}^{I_1}_{(2)}f(x \bullet x_1 \bullet \bm{1}_G \bullet x_2 \bullet y_2 \bullet x_3)\nn\\
&=\sum_{d(I_2) \le N+3-d(I_1)}
\widehat{S}^{I_2}_{(4)}\widehat{S}^{I_1}_{(2)}
f(x \bullet x_1 \bullet \bm{1}_G \bullet x_2 \bullet \bm{1}_G \bullet x_3)y_2^{I_2}
+R^{\widehat{S}^{I_1}_{(2)}f}_{N+3-d(I_1)},
\end{align}
where the remainder term $R^{\widehat{S}^{I_1}_{(2)}f}_{N+3-d(I_1)}$ satisfies
$$
\begin{aligned}
&|R^{\widehat{S}^{I_1}_{(2)}f}_{N+3-d(I_1)}(x_1, x_2, y_2, x_3)| \nn\\
&\le C(G)|y_2|^{N+4-d(I_1)} 
\sup\Big\{ |\widehat{\mathfrak{a}}^{I_2}_{(4)}
\widehat{S}^{I_1}_{(2)}f(x \bullet x_1 \bullet  \bm{1}_G \bullet x_2 \bullet y_2' \bullet x_3)| \\
&\hspace{1cm}\Big| d(I_2)=N+4-d(I_1), \, |y_2'| \le b(G)^{N+4-d(I_1)}|y_2|\Big\}.  
\end{aligned}
$$
Since it follows from Lemma \ref{estimate of f} that 
$$
\begin{aligned}
&|R^{\widehat{S}^{I_1}_{(2)} f}_{N+3-d(I_1)}(x_1, x_2, y_2, x_3)|\\
&\le C(G)|y_2|^{N+4-d(I_1)}
(1+|x \bullet x_1\bullet x_2 \bullet y_2|^{(N+4-d(I_1))(r-1)})\|D^{(N+4-d(I_1))r}f\|_\infty,
\end{aligned}
$$
we have
$$
\begin{aligned}
&\Big| \sum_{3 \le d(I_1) \le N+1} \iiint\hspace{-2.5mm}\iint_{|y_1|<1, |y_2| < 1}
R^{\widehat{S}^{I_1}_{(2)}f}_{N+3-d(I_1)}(x_1, x_2, y_2, x_3)\nn\\
&\hspace{1cm}\times y_1^{I_1}
 \, \mathfrak{M}_{2}^{(n)}(dx_1dy_1dx_2dy_2dx_3)\Big| \\
&\le \frac{1}{n^{3/2}}C(G)\sum_{3 \le d(I_1) \le N+1} 
\sum_{k=1}^n \sum_{\ell=1}^{n-k}
\|D^{(N+4-d(I_1))r}f\|_\infty\\
&\hspace{1cm}\times \Big(\frac{1}{n}
\big( L_{d(I_1)}(\mu^{(k)})+L_{d(I_1)}(\nu)\big)\Big)
\Big(\frac{1}{n}\big( L_{N+4-d(I_1)}(\mu^{(k+\ell)})+L_{N+4-d(I_1)}(\nu)\big)\Big) \\
&\hspace{1cm}\times 
\Big\{ 1+|x|^{(N+4-d(I_1))(r-1)}+m^{(N+4-d(I_1))(r-1)}_{\nu^{k-1}} + m^{(N+4-d(I_1))(r-1)}_{\nu^{\ell-1}} \Big\}\\
&\le \frac{1}{n^{7/2}}C(G)\sum_{3 \le d(I_1) \le N+1} \sum_{k=1}^n \sum_{\ell=1}^{n-k}
\|D^{(N+4-d(I_1))r}f\|_\infty n^{-(N+2-d(I_1))/2}n^{-(d(I_1)-2)/2}\\
&\hspace{1cm}\times (m_{\mu^{(k+\ell)}}^{N+4-d(I_1)}+m_{\nu}^{N+4-d(I_1)})
\Big\{ 1+|x|^{(N+4-d(I_1))(r-1)}\nn\\
&\hspace{1cm}+m^{(N+4-d(I_1))(r-1)}_{\nu^{k-1}} + m^{(N+4-d(I_1))(r-1)}_{\nu^{\ell-1}} \Big\}
=O(n^{-N/2})
\end{aligned}
$$
by using Lemma \ref{estimate of f} and \eqref{truncation}.

\vspace{2mm}
\noindent
{\bf Step 6.} 
We split the summation $\sum_{d(I_2) \le N+3-d(I_1)}$ as in \eqref{Proof; Taylor2} 
into the summations $\sum_{d(I_2) \le 2}$ and $\sum_{3 \le d(I_2) \le N+3-d(I_1)}$. 
Then, in the same way as {\bf Step 3}, we obtain
$$
\begin{aligned}
&\Big| \sum_{3 \le d(I_1) \le N+1} \sum_{d(I_2) \le 2}
\iiint\hspace{-2.5mm}\iint_{|y_1|<1, |y_2| < 1}
 \widehat{S}_{(4)}^{I_2}\widehat{S}_{(2)}^{I_1}
f(x \bullet x_1 \bullet \bm{1}_G \bullet x_2 \bullet \bm{1}_G \bullet x_3)\\
&\hspace{1cm}\times 
y_1^{I_1}y_2^{I_2}
\, \mathfrak{M}_2^{(n)}(dx_1dy_2dx_1dy_2dx_3)\Big|=O(n^{-1/2}).
\end{aligned}
$$
As for the terms corresponding to the summations over 
$3 \le d(I) \le N+1$ and
$3 \le d(I_2) \le N+3-d(I_1)$, 
that is, over $d(I_1)+d(I_2) \le N+3$ with $d(I_1), d(I_2) \ge 3$, 
their estimates can be done by applying the similar idea to the one 
at {\bf Step 4}. 
Namely, we replace the measure $\tau_{n^{-1/2}}(\mu^{(k+\ell+1)} \cdots \mu^{(n)})$
in \eqref{Proof; Measure-Decomp2} by
$$
\begin{aligned}
&\tau_{n^{-1/2}}(\mu^{(k+\ell+1)} \cdots \mu^{(n)})\\
&=\tau_{n^{-1/2}}\nu^{n-k-\ell} + \sum_{j=1}^{n-k-\ell}
\tau_{n^{-1/2}}\nu^{j-1}\tau_{n^{-1/2}}(\mu^{(k+\ell+j)} - \nu) 
\tau_{n^{-1/2}}(\mu^{(k+\ell+j+1)} \cdots \mu^{(n)}).
\end{aligned}
$$
We consider the integration with respect to the 
measure given by the second term in 
the decomposition above. 
Then the same argument as the one at {\bf Step 5} can be done here. 
It is clearly observed that this procedure finishes at $N$ times and 
then the integrand is given by 
$$
\widehat{S}^{I_{N-1}}_{(2(N-1))}\widehat{S}^{I_{N-2}}_{(2(N-2))} 
\cdots \widehat{S}^{I_1}_{(2)}
f(x \bullet x_1 \bullet \bm{1}_G \bullet x_2 \bullet \bm{1}_G \bullet \cdots \bullet 
\bm{1}_G \bullet x_{N})y_1^{I_1}y_2^{I_2} \cdots y_{N-1}^{I_{N-1}}
$$
by repeating the use of the right stratified Taylor formula, 
where $d(I)=d(I_2)=\cdots=d(I_{N-1})=3$ and the domain of the integral 
is $\{|y_1|<1, |y_2|<1, \cdots, |y_{N-1}|<1\}$. 
In this procedure, the estimates of integrals with respect to the measures
$$
\begin{aligned}
&\sum_{i_1+i_2+\cdots+i_\ell=n-\ell+1}\tau_{n^{-1/2}}\nu^{i_1}
\tau_{n^{-1/2}}(\mu^{(i_1+1)}-\nu) \tau_{n^{-1/2}}\nu^{i_2}
\tau_{n^{-1/2}}(\mu^{(i_1+i_2+2)}-\nu) \\
&\hspace{1cm} \times \cdots \times \tau_{n^{-1/2}}\nu^{i_{\ell-1}}
\tau_{n^{-1/2}}(\mu^{(n-i_\ell)}-\nu)\tau_{n^{-1/2}}\nu^{i_\ell}, \qquad 
\ell=2, 3, \dots, N
\end{aligned}
$$
including e.g., $\mathfrak{M}_{1}^{(n)}$ have not been done, though 
the rest term is easily shown to be up to $O(n^{-1/2})$ by applying
the right stratified Taylor formula up to order 2 
and the same way as that of {\bf Step 2}. 

\vspace{2mm}
\noindent
{\bf Step 7}. At this final step, we consider the terms written by 
\begin{align}
&\sum_{\substack{i_1+i_2+\cdots+i_\ell \\ =n-\ell+1}}
\sum_{\substack{d(I_1)+\cdots+d(I_{\ell-1}) \le N+2\ell-3 \\ d(I_1), \dots, d(I_{\ell-1}) \ge 3}}
\int\hspace{-1mm}\cdots\hspace{-1mm}\int_{G^\ell} \widehat{S}^{I_{\ell-1}}_{(2(\ell-1))}\widehat{S}^{I_{\ell-2}}_{(2(\ell-2))} 
\cdots \widehat{S}^{I_1}_{(2)}
f(x \bullet x_1 \nn\\
&\hspace{0.5cm} \bullet \bm{1}_G \bullet x_2 \bullet 
\bm{1}_G \bullet \cdots \bullet \bm{1}_G \bullet x_\ell)
\tau_{n^{-1/2}}\nu^{i_1}(dx_1)\tau_{n^{-1/2}}\nu^{i_2}(dx_2) 
\cdots \tau_{n^{-1/2}}\nu^{i_\ell}(dx_\ell)\nn\\
&\hspace{0.5cm}\times \Big(\int_{|y_1|<1}y_1^{I_1}\tau_{n^{-1/2}}(\mu^{(i_1+1)}-\nu)(dy_1)\Big) \cdots \Big(\int_{|y_{\ell-1}|<1}y_{\ell-1}^{I_{\ell-1}}\tau_{n^{-1/2}}(\mu^{(n-i_\ell)}-\nu)(dy_{\ell-1})\Big)
\label{Proof; final terms}
\end{align}
for $\ell=2, 3, \dots, N$. 
Since it holds that 
$$
\begin{aligned}
&\int_{|y_k|<1}y_k^{I_k}\tau_{n^{-1/2}}(\mu^{(i_1+\cdots+i_k+k)}-\nu)(dy_k)\\
&=\int_{G}y_k^{I_k}\tau_{n^{-1/2}}(\mu^{(i_1+\cdots+i_k+k)}-\nu)(dy_k)
-\int_{|y_k| \ge 1}y_k^{I_k}\tau_{n^{-1/2}}(\mu^{(i_1+\cdots+i_k+k)}-\nu)(dy_k)\\
&=n^{-d(I_k)/2}(m^{I_k}_{\Phi_0} - m^{I_k}_{\nu})
-\int_{|y_k| \ge 1}y_k^{I_k}\tau_{n^{-1/2}}(\mu^{(i_1+\cdots+i_k+k)}-\nu)(dy_k)
\end{aligned}
$$
and 
$$
\begin{aligned}
&\prod_{k=1}^{\ell-1}\Big|\int_{|y_k| \ge 1}y_k^{I_k}\tau_{n^{-1/2}}(\mu^{(i_1+\cdots+i_k+k)}-\nu)(dy_k) \Big|\\
&\le \prod_{k=1}^{\ell-1} n^{-d(I_k)/2} \cdot \frac{1}{n}\Big(\Lambda_{d(I_k)}(\mu^{(i_1+\cdots+i_k+k)})+\Lambda_{d(I_k)}(\nu)\Big)\\
&\le n^{-3(\ell-1)/2}n^{-(\ell-1)}\prod_{k=1}^{\ell-1} 
n^{-N/2}(m^{N+2}_{\mu^{(i_1+\cdots+i_k+k)}}+m^{N+2}_\nu)\\
&=n^{-(N+5)(\ell-1)/2}\prod_{k=1}^{\ell-1} 
(m^{N+2}_{\mu^{(i_1+\cdots+i_k+k)}}+m^{N+2}_\nu)=O(n^{-N/2})
\end{aligned}
$$
for $k=1, 2, \dots, \ell$ and $\ell=2, 3, \dots, N$, we may replace each integral 
$$
\int_{|y_k|<1}y_k^{I_k}\tau_{n^{-1/2}}(\mu^{(i_1+\cdots+i_k+k)}-\nu)(dy_k), 
\qquad k=1, 2, \dots, \ell, \,\, \ell=2, 3, \dots, N
$$ 
in \eqref{Proof; final terms} by $n^{-d(I_k)/2}(m^{I_k}_{\Phi_0} - m^{I_k}_{\nu})$. 
Therefore, we obtain
\begin{align}
&\sum_{\ell=2}^N\sum_{\substack{i_1+i_2+\cdots+i_\ell \\=n-\ell+1}}
\sum_{\substack{d(I_1)+\cdots+d(I_{\ell-1}) 
\le N+2\ell-3 \\ d(I_1), \dots, d(I_{\ell-1}) \ge 3}}
\int\hspace{-1mm}\cdots\hspace{-1mm}\int_{G^\ell}
\widehat{S}^{I_{\ell-1}}_{(2(\ell-1))}\widehat{S}^{I_{\ell-2}}_{(2(\ell-2))} 
\cdots \widehat{S}^{I_1}_{(2)}
f(x \bullet x_1\nn\\
&\hspace{1cm}
 \bullet \bm{1}_G \bullet  \cdots \bullet \bm{1}_G \bullet x_\ell)
\nu_{i_1/n}^{\Phi_0}(dx_1)\nu_{i_2/n}^{\Phi_0}(dx_2) 
\cdots \nu_{i_\ell/n}^{\Phi_0}(dx_\ell) \nn\\
&\hspace{1cm}\times 
\prod_{k=1}^{\ell-1}n^{-d(I_k)/2}(m^{I_k}_{\Phi_0} - m^{I_k}_{\nu})\nn\\
&=\frac{\xi_1}{n^{1/2}}+\frac{\xi_2}{n^{2/2}}+\cdots+\frac{\xi_{N-1}}{n^{(N-1)/2}},
\label{pre-expansion}
\end{align}
where we used $\tau_{n^{-1/2}}\nu^i=\nu_{i/n}^{\Phi_0}$ for $i=1, 2, \dots$ and 
the each coefficient $\xi_j$, $j=1, 2, \dots, N-1$, depends on 
$x$, $G$,  $f$, $p$ and $\Phi_0$. 
The explicit representation of each $\xi_j$, $j=1, 2, \dots, N-1$, is 
discussed in the next subsection. By putting it all together, 
we obtain the desired expansion \eqref{Edgeworth} 
and this completes the proof of Theorem \ref{main}. 
\end{proof}

\subsection{Explicit representations of coefficients in the Edgeworth expansion}

We give explicit representations of the coefficients $\xi_j=\xi_j(x, G, f, p, \Phi_0),$ 
$j=1, 2, \dots, N-1$, appearing in the Edgeworth expansion in Theorem \ref{main}. 
To obtain them, we need to 
pick up the terms of order $n^{-j/2}$
on the left-hand side of \eqref{pre-expansion}. 
For this sake, we need to use the following  multidimensional 
version of the so-called Euler--Maclaurin summation formula. 
We refer to \cite{BP} for more details. 

\begin{lm}[cf.\,{\cite[Theorem 1]{Pap2}}]
\label{EM}
Let $\ell \ge 2$ and $\Delta(\ell)$ be a simplex defined by
$$
\Delta(\ell):=\{t=(t_1, t_2, \dots, t_\ell) \in \mathbb{R}^{\ell} \, | \, 
t_1+t_2+\cdots+t_\ell=1, \, t_1, t_2, \dots, t_\ell \ge 0\}. 
$$
Suppose that a function $F : \Delta(\ell) \to \mathbb{R}$ has
continuous partial derivatives of all orders $J\in \mathcal{I}$
with $|J| \le (\ell-1)s$ for some $s \in \mathbb{N}$.  Then we have
$$
\begin{aligned}
&\frac{1}{n^{\ell-1}}\sum_{\substack{i_1+i_2+\cdots+i_\ell \\=n-\ell+1}}
F\Big(\frac{i_1}{n}, \frac{i_2}{n}, \dots, \frac{i_\ell}{n}\Big)\\
&=\sum_{i=2}^\ell \sum_{k=0}^{s-1} \frac{1}{k! n^k}
\int_{\Delta(i)}B_k^{(\ell, i)}(\del_1, \del_2, \dots, \del_\ell)
F(t, \underbrace{0, 0, \dots, 0}_{(\ell-i)\text{-times}}) \, dt+R_n^{(\ell, s)},
\end{aligned}
$$
where each $B_j^{(\ell, i)}$ is a polynomial defined by the following generating function
$$
\sum_{j=0}^\infty \frac{B_j^{(\ell, i)}(t_1, t_2, \dots, t_\ell)}{j!}x^{j}
=(-1)^{\ell-i}x^{\ell-1}\sum_{q=i}^{\ell}\frac{(t_1-t_q)(t_2-t_q)\cdots(t_i-t_q)}{\dis\prod_{\substack{\alpha=1, 2, \dots, k, \\ \alpha \neq q}}(e^{xt_\alpha}-e^{xt_q})}
$$
and 
$R_n^{(\ell, s)}$ satisfies 
$|R_n^{(\ell, s)}| \le C(\ell, s, F)n^{-s}$ for some postive constant $C(\ell, s, F)>0$. 
\end{lm}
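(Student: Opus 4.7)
The plan is to prove the multidimensional Euler--Maclaurin formula by induction on the simplex dimension $\ell$, reducing at each step to the classical one-dimensional Euler--Maclaurin summation. The base case $\ell = 2$ is essentially the classical statement itself: the constraint $i_1+i_2 = n-1$ parametrizes the sum by a single index $i_1 \in \{0,\dots,n-1\}$, and after absorbing the $1/n$-shift inside the argument by a Taylor expansion, the classical expansion in Bernoulli polynomials at the two endpoints of $[0,1]$ yields the $i=2$ bulk integral over $\Delta(2)$ together with the $i=1$ boundary contributions. The polynomials $B_k^{(2,i)}$ thus obtained are just the standard Bernoulli polynomials repackaged in the generating-function form stated in the lemma.

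For the inductive step from $\ell-1$ to $\ell$, I would single out the last coordinate and rewrite the simplex sum as a one-dimensional outer sum in $t_\ell := i_\ell/n$, whose summand is an $(\ell-1)$-dimensional simplex sum on the set $\{i_1+\cdots+i_{\ell-1} = n-\ell+1-i_\ell\}$. The inductive hypothesis expands this inner sum as a collection of integrals over sub-simplices $\Delta(i)$ with $i \le \ell-1$, against differential operators $B_k^{(\ell-1,i)}(\partial)$ applied to $F(\,\cdot\,, t_\ell)$, plus a remainder of size $O(n^{-s})$. Applying classical Euler--Maclaurin to the outer Riemann sum in $t_\ell$ then produces a further integral together with a new layer of Bernoulli boundary corrections at $t_\ell = 0$ and $t_\ell = 1$. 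Combining the two layers yields the claimed expansion over $\Delta(\ell)$ together with contributions from its lower-dimensional faces, which correspond to the $i<\ell$ terms in the claimed sum.

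The main obstacle is the combinatorial identification of the coefficients produced by this two-step induction with the stated generating-function form of $B_j^{(\ell,i)}$. Each application of one-dimensional Euler--Maclaurin contributes a factor related to $x/(e^{xt_\alpha}-1)$, and after $\ell-1$ iterations one must show, by repeated partial-fraction decomposition and symmetrization in the variables $t_1,\dots,t_\ell$, that the resulting rational function in the exponentials collapses into the compact product form involving $\prod_{\alpha\neq q}(e^{xt_\alpha}-e^{xt_q})^{-1}$ given in the lemma. Once this generating-function identity is verified inductively, the remainder estimate $|R_n^{(\ell,s)}| \le C(\ell,s,F) n^{-s}$ follows routinely: each inductive step introduces a 1D Euler--Maclaurin remainder of size $n^{-s}$ depending on sup norms of $s$-th partial derivatives in the direction just expanded, so after $\ell-1$ steps the cumulative derivative budget is exactly $(\ell-1)s$, matching the hypothesis on $F$ and exploiting compactness of $\Delta(\ell)$ to bound the resulting integrals uniformly.
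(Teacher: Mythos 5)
This lemma is stated in the paper as a citation to Pap's result \cite[Theorem~1]{Pap2}; the paper does not supply its own proof, so there is no internal argument to compare your proposal against. What can be assessed is whether your proposal amounts to a proof. As written, it does not: you have described a strategy and explicitly deferred its hardest step.

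Your outline (condition on $i_\ell$, apply the inductive hypothesis to the inner $(\ell-1)$-dimensional simplex sum, then apply one-dimensional Euler--Maclaurin in the outer variable) has a real mismatch at the inductive step that you do not address. After fixing $i_\ell$, the inner sum runs over $i_1+\cdots+i_{\ell-1}=n-\ell+1-i_\ell$, with arguments still $i_k/n$. But the inductive hypothesis at level $\ell-1$ is about sums of the form $\frac{1}{m^{\ell-2}}\sum_{i_1+\cdots+i_{\ell-1}=m-\ell+2}G(i_1/m,\dots,i_{\ell-1}/m)$, which forces $m=n-1-i_\ell$ and arguments $i_k/m$, not $i_k/n$. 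Reconciling the two requires a rescaling that depends on $i_\ell$ (and on $n$), so the function $G$ to which you would apply the hypothesis changes as the outer variable $t_\ell=i_\ell/n$ varies, introducing extra chain-rule and rescaling factors that must then be tracked through the outer Euler--Maclaurin expansion. None of this bookkeeping appears in your proposal, and it is exactly the kind of thing that determines whether the stated coefficients $B_k^{(\ell,i)}$ emerge.

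More importantly, you concede the central point yourself: the passage beginning ``the main obstacle is the combinatorial identification of the coefficients\ldots one must show\ldots that the resulting rational function\ldots collapses into the compact product form'' names the entire nontrivial content of the lemma and then defers it. The generating-function identity with $\prod_{\alpha\neq q}(e^{xt_\alpha}-e^{xt_q})^{-1}$ in the denominator is not a corollary of the iterated $x/(e^{x}-1)$ structure that is merely ``verified inductively''; producing it is the proof. Your base-case remark about ``the $i=1$ boundary contributions'' is also not consistent with the statement, whose sum starts at $i=2$; the lower-dimensional contributions in fact vanish because the numerator $(t_1-t_q)\cdots(t_i-t_q)$ is identically zero when $i=1$ and $q=1$, but you would need to observe this (and its higher-dimensional analogues for lower faces) explicitly rather than asserting that boundary terms simply appear. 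As it stands, the proposal is a research plan rather than a proof, and the two gaps above (the $n$-vs-$n-1-i_\ell$ rescaling and the coefficient identity) are where it would stall.
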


We now apply Lemma \ref{EM} to \eqref{pre-expansion} in the case where 
$$
\begin{aligned}
&F\Big(\frac{i_1}{n}, \frac{i_2}{n}, \dots, \frac{i_\ell}{n}\Big)\\
&=\sum_{\substack{d(I_1)+\cdots+d(I_{\ell-1}) 
\le N+2\ell-3 \\ d(I_1), \dots, d(I_{\ell-1}) \ge 3}}
\int\hspace{-1mm}\cdots\hspace{-1mm}\int_{G^\ell}
\widehat{S}^{I_{\ell-1}}_{(2(\ell-1))}\widehat{S}^{I_{\ell-2}}_{(2(\ell-2))} 
\cdots \widehat{S}^{I_1}_{(2)}
f(x \bullet x_1 \bullet \bm{1}_G \bullet  \cdots \bullet \bm{1}_G \bullet x_\ell)\nn\\
&\hspace{0.5cm}\times 
\nu_{i_1/n}^{\Phi_0}(dx_1)\nu_{i_2/n}^{\Phi_0}(dx_2) 
\cdots \nu_{i_\ell/n}^{\Phi_0}(dx_\ell).
\end{aligned}
$$
Then, we have the following explicit representations of coefficients $\xi_1, \xi_2, \dots, \xi_{N-1}$
in terms of the centered Gaussian semigroup $(\nu_t^{\Phi_0})_{t \ge 0}$ associated with 
the infinitesimal generator $\A(\Phi_0)$. 

\begin{pr}\label{Edgeworth-coef}
Let $\{\xi_j=\xi_j(x, G, f, p, \Phi_0)\}_{j=1}^{N-1}$ be as in Theorem {\rm\ref{main}}. 
Then, for every $j=1, 2, \dots, N-1$, we obtain
\begin{align}
\xi_j&=\sum_{i=1}^j \sum_{\ell=2}^{i+1} \sum_{q=i-\ell+1}^{[(j-i)/2]}
\sum_{\substack{d(I_1)+\cdots+d(I_i)=j-2q+2\ell-2 \\ d(I_1), \dots, d(I_i) \ge 3}} \nn\\ &\hspace{1cm}\Bigg\{ \int_{\Delta(i)}\int_{G^{i}}
\mathcal{D}_{i, \ell, q}^{I_1, I_2, \dots, I_i}f(x \bullet x_1 \bullet \bm{1}_G \bullet  \cdots \bullet \bm{1}_G \bullet x_\ell \bullet \bm{1}_G \bullet 
\bm{1}_G \bullet \cdots \bullet \bm{1}_G \bullet \bm{1}_G)\nn\\
&\hspace{1cm}\times \nu_{t_1}^{\Phi_0}(dx_1)\nu_{t_2}^{\Phi_0}(dx_2) \cdots 
\nu_{t_\ell}^{\Phi_0}(dx_\ell) \, dt_1dt_2 \cdots dt_\ell \times \prod_{k=1}^i 
(m^{I_k}_{\Phi_0} - m^{I_k}_{\nu})\Bigg\},
\label{coef}
\end{align}
where each $\mathcal{D}_{i, \ell, q}^{I_1, I_2, \dots, I_i}$ is a 
differential operator defined by
$$
\mathcal{D}_{i, \ell, q}^{I_1, I_2, \dots, I_i}
=\frac{1}{q!}B_q^{(i+1, \ell)}(\A_{(1)}, \A_{(3)}, \dots, \A_{(2i+1)})
\widehat{S}^{I_{i}}_{(2i)}\widehat{S}^{I_{i-1}}_{(2(i-1))} 
\cdots \widehat{S}^{I_1}_{(2)}. 
$$
\end{pr}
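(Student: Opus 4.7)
The strategy is to identify the coefficient of each power $n^{-j/2}$ in the left-hand side of \eqref{pre-expansion}, which was already established during the proof of Theorem \ref{main}. For each outer index $\ell'\in\{2,\ldots,N\}$ (the variable called $\ell$ in \eqref{pre-expansion}) and each tuple $\vec I=(I_1,\ldots,I_{\ell'-1})$ with $d(I_k)\ge 3$, I would introduce the smooth function
\begin{align*}
F_{\vec I}(t_1,\ldots,t_{\ell'}):=\int_{G^{\ell'}}\widehat{S}^{I_{\ell'-1}}_{(2(\ell'-1))}\cdots\widehat{S}^{I_1}_{(2)}f(x_1\bullet\bm{1}_G\bullet\cdots\bullet x_{\ell'})\prod_{k=1}^{\ell'}\nu_{t_k}(dx_k),
\end{align*}
and observe that $f\in C^{3(N+1)r}_b(G)$ together with Lemma \ref{estimate of f} guarantees that $F_{\vec I}$ has continuous partial derivatives of any prescribed order, so that Lemma \ref{EM} applies to the inner sum $\sum_{i_1+\cdots+i_{\ell'}=n-\ell'+1}F_{\vec I}(i_1/n,\ldots,i_{\ell'}/n)$ with $s$ taken large enough that the remainder $n^{\ell'-1-s}$, multiplied by the prefactor $n^{-d_{\mathrm{tot}}/2}$ with $d_{\mathrm{tot}}:=d(I_1)+\cdots+d(I_{\ell'-1})$, is absorbed into $O(n^{-N/2})$.

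Applying Lemma \ref{EM} converts the discrete sum into a finite expansion whose summands are indexed by $(i',k)$ with $i'\in\{2,\ldots,\ell'\}$ and $k\in\{0,1,\ldots,s-1\}$, and consist of $n^{\ell'-1-k}/k!$ times integrals over $\Delta(i')$ of $B_k^{(\ell',i')}(\partial_1,\ldots,\partial_{\ell'})F_{\vec I}(t,0,\ldots,0)$. Next I would invoke the semigroup property: since the infinitesimal generator of $(\nu_t)_{t\ge 0}$ is $\mathcal{A}$, any derivative $\partial_{t_k}$ of an integral $\int g(x)\nu_{t_k}(dx)$ equals $\int(\mathcal{A}g)(x)\nu_{t_k}(dx)$. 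Translating this to the multi-variable setting, every $\partial_{t_k}$ occurring in $B_k^{(\ell',i')}(\partial)$ may be replaced by $\mathcal{A}_{(2k-1)}$, the generator acting on the $(2k-1)$-th slot of $x_1\bullet\bm{1}_G\bullet x_2\bullet\cdots\bullet x_{\ell'}$. Setting $t_{i'+1}=\cdots=t_{\ell'}=0$ then collapses the corresponding measures $\nu_{t_j}$ to Dirac masses at $\bm{1}_G$, producing exactly the string $x_1\bullet\bm{1}_G\bullet\cdots\bullet x_{i'}\bullet\bm{1}_G\bullet\cdots\bullet\bm{1}_G$ appearing in \eqref{coef}.

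After the reindexing $i:=\ell'-1$ (number of multi-indices), $\ell:=i'$ (dimension of the surviving simplex), and $q:=k$ (E--M order), the total power of $n$ attached to each summand is $n^{\ell'-1-k}\cdot n^{-d_{\mathrm{tot}}/2}=n^{i-q-d_{\mathrm{tot}}/2}$. Equating this exponent to $-j/2$ selects the degree constraint that must appear in the inner sum in \eqref{coef}, while the E--M range $2\le i'\le\ell'$ translates into $2\le\ell\le i+1$ and the constraint $d(I_k)\ge 3$ yields the upper bound $q\le[(j-i)/2]$. The remaining lower bound $q\ge i-\ell+1$ is forced by an intrinsic vanishing property of the polynomials $B_k^{(\ell',i')}$, visible in the factor $x^{\ell'-1}$ of their generating function, which makes $B_k^{(\ell',i')}\equiv 0$ for $k$ too small relative to $\ell'-i'$.

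The main obstacle I anticipate is precisely this last bookkeeping step: reconciling the naive power count from Lemma \ref{EM} with the degree relation $d(I_1)+\cdots+d(I_i)=j-2q+2\ell-2$ stated in \eqref{coef}, which requires keeping careful track of how the leading factor $x^{\ell'-1}$ in the generating function of $B$ is absorbed partly as a shift of the effective E--M order and partly as the lower bound on $q$. Once this matching of $x$-degrees is completed, collecting the coefficient of $n^{-j/2}$ across all $\ell'$ and $\vec I$ gives \eqref{coef} by direct substitution into the E--M expansion, and Proposition \ref{Edgeworth-coef} follows.
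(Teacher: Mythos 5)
Your approach coincides with the paper's own (Lemma \ref{EM} is applied to the sum \eqref{pre-expansion} with exactly the choice of $F$ that you write down), and you correctly identify the extra ingredient the paper leaves implicit, namely that the partial derivatives $\partial_{t_k}$ supplied by the Euler--Maclaurin operator $B_q$ must be converted into the generator $\mathcal{A}$ acting on the $(2k-1)$-th slot via Kolmogorov's equation for $(\nu_t)$, and that setting the trailing times to zero collapses the corresponding Gaussian factors to Dirac masses at $\bm{1}_G$. So the decomposition and the key lemma are the same.

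The one step you flag as the ``main obstacle'' is genuinely unresolved in your writeup, and your proposed resolution is shaky. You claim that the factor $x^{\ell'-1}$ in the generating function of $B_j^{(\ell',i')}$ forces $B_q^{(\ell',i')}\equiv 0$ for small $q$. But that prefactor is exactly compensated by the denominator $\prod_{\alpha\neq q}(e^{xt_\alpha}-e^{xt_q})$, which itself vanishes to order $x^{\ell'-1}$ at $x=0$ when the product runs over all $\alpha\in\{1,\dots,\ell'\}\setminus\{q\}$; in that case $B_0^{(\ell',i')}$ is generically nonzero and no shift occurs. The naive power count you (correctly) write down, $n^{\ell'-1-k}\cdot n^{-d_{\mathrm{tot}}/2}$, therefore leads to $d(I_1)+\cdots+d(I_i)=2i-2q+j$ after the reindexing $i=\ell'-1$, $q=k$, which matches the stated constraint $j-2q+2\ell-2$ only when $\ell=i+1$. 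Reconciling this with the full range $2\le\ell\le i+1$ and the lower bound $q\ge i-\ell+1$ requires an explicit vanishing statement for $B_q^{(i+1,\ell)}$ that you assert but do not prove; the generating-function argument as you present it does not deliver it, and since the claimed shift by $\ell'-1-i'$ would give $q\ge i-\ell$ rather than $q\ge i-\ell+1$, even the optimistic reading is off by one. So the proposal is structurally the same as the paper's but the bookkeeping that produces the precise shape of \eqref{coef} remains a gap.
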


\section{{\bf Berry--Esseen type bound via Trotter's approximation theorem}}

By applying Theorem \ref{main} in the case where $N=2$, we immediately establish the 
so-called Berry--Esseen type bound for 
the scaled random walk $\{\tau_{n^{-1/2}}(\xi_n)\}_{n=1}^\infty$, 
which gives the rate of convergence of the discrete semigroup generated by  
$\{\tau_{n^{-1/2}}(\xi_n)\}_{n=1}^\infty$ to the heat semigroup 
$(\e^{t\A(\Phi_0)})_{t \ge 0}$ whose infinitesimal 
generator is $\A(\Phi_0)$. 

\begin{co}[Berry--Esseen type bound]
\label{BE-bound}
For every $f \in C^{9r}(G)$, we have 
$$
\|\mathcal{L}^{[nt]} \mathcal{P}^H_{n^{-1/2}}f
- \mathcal{P}^H_{n^{-1/2}}\e^{t\mathcal{A}(\Phi_0)}f\|_\infty 
\le \frac{C}{n^{1/2}}, \qquad n \in \mathbb{N},
$$
for some positive constant $C=C(G, f, p, \Phi_0, t)>0$. 
\end{co}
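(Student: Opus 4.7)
The plan is to derive the estimate directly from Theorem \ref{main} specialized to $N=2$. First observe that $3(N+1)r\big|_{N=2} = 9r$, so the hypothesis $f \in C^{9r}(G)$ is exactly the regularity required by Theorem \ref{main}. Applying the theorem, the Edgeworth expansion collapses to
$$
\mathcal{L}^{[nt]} \mathcal{P}^H_{n^{-1/2}}f(x)
=\mathcal{P}^H_{n^{-1/2}}e^{t\mathcal{A}}f(x)+\frac{\xi_1}{n^{1/2}}+O(n^{-1}),
\qquad x \in V,
$$
with $\xi_1 = \xi_1(G,f,p,\Phi_0,t)$ depending on the listed parameters but not on $x$.

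The only real task is to verify that both $|\xi_1|$ and the implicit constant in the $O(n^{-1})$ term admit $x$-uniform bounds. For $\xi_1$, I would use the explicit representation of Proposition \ref{Edgeworth-coef}, which expresses $\xi_1$ as a finite sum over multi-indices $I$ with $d(I)=3$ of integrals of the form
$$
\int_{\Delta(2)}\iint_{G^2} \widehat{S}^{I}_{(2)} f(x_1 \bullet \bm{1}_G \bullet x_2)\, \nu_{t_1}(dx_1)\nu_{t_2}(dx_2)\, dt_1 dt_2 \cdot (m^I_{\Phi_0}-m^I_\nu).
$$
By Lemma \ref{estimate of f} (applied with $\ell=1$, so $d=d(I)=3$), the integrand is dominated by $C(G)(1+|x_1|^{3(r-1)})\|D^{3r}f\|_\infty$, which is integrable against any Gaussian measure on $G$ since such measures possess moments of arbitrary order. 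The discrete moments $m^I_{\Phi_0}$ are finite because they are finite sums over the edge set $E_0$ of the finite base graph $X_0$, and the Gaussian moments $m^I_\nu$ are finite by Lemma \ref{Gaussian moment}. Hence $|\xi_1| \le C(G,f,p,\Phi_0,t)<\infty$.

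For the $O(n^{-1})$ remainder, inspection of the seven steps in the proof of Theorem \ref{main} shows that the implicit constant is a finite combination of $\|D^{9r}f\|_\infty$, Gaussian moments $m^\alpha_\nu$ of order at most $9r$, and the ($x$-independent) random walk moments $m^\alpha_{\mu^{(k)}}=m^\alpha_{\Phi_0}$, all of which are finite and uniform in $x \in V$. Putting the two contributions together yields
$$
\big|\mathcal{L}^{[nt]} \mathcal{P}^H_{n^{-1/2}}f(x) - \mathcal{P}^H_{n^{-1/2}}e^{t\mathcal{A}}f(x)\big| \le \frac{|\xi_1|}{n^{1/2}} + \frac{C'}{n} \le \frac{C}{n^{1/2}}
$$
uniformly in $x \in V$ and $n \in \mathbb{N}$. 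There is no genuine obstacle here: the heavy analytic work is already contained in the proof of Theorem \ref{main}, and the only bookkeeping is to verify that the coefficients and remainder constants do not depend on the base point $x$, a property that is visible from the construction of each quantity via $\Gamma$-invariant data and Gaussian integrals.
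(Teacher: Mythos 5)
Your proposal is correct and follows exactly the paper's intended derivation: the paper's proof of Corollary \ref{BE-bound} is precisely the observation that the Edgeworth expansion of Theorem \ref{main} with $N=2$ (whence $3(N+1)r=9r$) gives a single term $\xi_1/n^{1/2}$ plus an $O(n^{-1})$ remainder, with all constants uniform in $x$. The additional bookkeeping you supply (boundedness of $\xi_1$ via Proposition \ref{Edgeworth-coef} and Lemma \ref{estimate of f}, and $x$-independence of the remainder constants) is a faithful unpacking of what the paper treats as immediate.
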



In this section, we give an alternative proof of Corollary \ref{BE-bound}
from functional analytic point of view. 
Recall that the proof of Proposition \ref{CLT} given in \cite{IKN}
heavily depends on the celebrated Trotter's semigroup approximation theorem.  
It provides a sufficient condition for the convergence of a sequence 
of operator semigroups in terms of the corresponding sequence of infinitesimal generators.
See \cite{Trotter} and \cite{Kurtz} for more details. 

Let $(B_n, \|\cdot\|_{B_n}), \, n \in \mathbb{N}$, and $(E, \|\cdot\|_E)$ be 
 Banach spaces. Let $P_n : E \to B_n, \, n \in \mathbb{N},$ be a bounded linear operator
 satisfying $\|f_n-P_nf\|_{B_n} \to 0$ as $n \to \infty$. 
 We say that the sequence of pairs $\{(B_n, P_n)\}_{n=1}^\infty$ approximates $E$ 
if  $\|P_n f\|_{B_n} \to \|f\|_E$ as $n \to \infty$ for every $f \in E$.
 Then, Trotter's approximation theorem can be stated as follows:

\begin{pr}[cf.\,{\cite{Trotter, Kurtz}}]
\label{Trotter's approximation}
Let $T_n, \, n \in \mathbb{N},$ be a bounded linear operator on $B_n$ with $\|T_n\| \le 1$.
Let $\{\ell(n)\}_{n=1}^\infty$ be a sequence of positive numbers and 
$\mathfrak{A}_n:=(T_n-I)/\ell(n)$ for $n \in \mathbb{N}.$ 
Suppose that $\ell(n) \to 0$ as $n \to \infty$ and $\mathfrak{A}$ is defined by the closure of 
the limit $\lim_{n \to \infty}\mathfrak{A}_n$. 
If the domain $\mathrm{Dom}(\mathfrak{A})$ is dense in $E$ and 
the range $\mathrm{Ran}(\lambda_0-\mathfrak{A})$ is dense in $E$ for some $\lambda_0>0$, 
then there exists a $C_0$-semigroup $(\mathcal{T}_t)_{t \ge 0}$ on $E$ such that
$$
\lim_{n \to \infty}\|T_n^{[t/\ell(n)]}P_n f - P_n \mathcal{T}_t f\|_{B_n}=0, \qquad t \ge 0. 
$$
\end{pr}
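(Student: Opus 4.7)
The plan is to derive this result from the Lumer--Phillips generation theorem combined with a telescoping approximation in the scale $(B_n,P_n)$. The key resource is the contractivity $\|T_n\|_{B_n\to B_n}\le 1$, which forces every $\mathfrak{A}_n$ to be dissipative with a uniform resolvent bound; the rest is a careful passage to the limit, first on resolvents and then on semigroups.

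First I would verify that each $\mathfrak{A}_n$ is dissipative. Rewriting $\lambda I-\mathfrak{A}_n=(\lambda+k(n)^{-1})I-k(n)^{-1}T_n$ and using $\|T_n\|\le 1$, a Neumann series expansion shows that $(\lambda I-\mathfrak{A}_n)^{-1}$ exists as a bounded operator on all of $B_n$ with $\|(\lambda I-\mathfrak{A}_n)^{-1}\|_{B_n\to B_n}\le\lambda^{-1}$ for every $\lambda>0$. For $f\in\mathrm{Dom}(\mathfrak{A})$, picking approximating sequences $f_n\in B_n$ with $\|f_n-P_n f\|_{B_n}\to 0$ and $\|\mathfrak{A}_n f_n-P_n\mathfrak{A}f\|_{B_n}\to 0$, the estimate $\|(\lambda I-\mathfrak{A}_n)f_n\|_{B_n}\ge\lambda\|f_n\|_{B_n}$ passes to the limit via $\|P_n g\|_{B_n}\to\|g\|_E$ to yield $\|(\lambda I-\mathfrak{A})f\|_E\ge\lambda\|f\|_E$. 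Combined with the density of $\mathrm{Dom}(\mathfrak{A})$ and of $\mathrm{Ran}(\lambda_0 I-\mathfrak{A})$ in $E$, the Lumer--Phillips theorem produces a contraction $C_0$-semigroup $(\mathcal{T}_t)_{t\ge 0}$ on $E$ generated by $\mathfrak{A}$.

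Next I would upgrade this to resolvent convergence $\|(\lambda I-\mathfrak{A}_n)^{-1}P_n g-P_n(\lambda I-\mathfrak{A})^{-1}g\|_{B_n}\to 0$ for $\lambda>0$ and $g\in E$. On $g=(\lambda I-\mathfrak{A})f$ with $f\in\mathrm{Dom}(\mathfrak{A})$ and $f_n$ as above, the identity
\begin{equation*}
(\lambda I-\mathfrak{A}_n)^{-1}P_n g-P_n f=(\lambda I-\mathfrak{A}_n)^{-1}\bigl(\lambda(P_n f-f_n)+\mathfrak{A}_n f_n-P_n\mathfrak{A}f\bigr)+(f_n-P_n f)
\end{equation*}
has right-hand side tending to zero by the very definition of $\mathfrak{A}$ as closure of $\lim\mathfrak{A}_n$; the uniform bound $\lambda^{-1}$ and the density of such $g$'s extend the convergence to all of $E$. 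Then the telescoping identity
\begin{equation*}
T_n^{m}P_n f-P_n\mathcal{T}_{m k(n)}f=\sum_{j=0}^{m-1}T_n^{m-1-j}\bigl(T_n P_n-P_n\mathcal{T}_{k(n)}\bigr)\mathcal{T}_{j k(n)}f,
\end{equation*}
with $m=[t/k(n)]$, combined with $\|T_n\|\le 1$, reduces the problem to bounding $k(n)^{-1}\|T_n P_n h-P_n\mathcal{T}_{k(n)}h\|_{B_n}$ uniformly over $h$ in the orbit $\{\mathcal{T}_s f:s\in[0,t]\}$ for $f$ in a core.

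The main obstacle is precisely this uniformity. Via the commutation $\mathfrak{A}\mathcal{T}_s=\mathcal{T}_s\mathfrak{A}$ on $\mathrm{Dom}(\mathfrak{A})$ and strong continuity of $(\mathcal{T}_s)$, the single-step error $k(n)^{-1}(T_n P_n-P_n\mathcal{T}_{k(n)})h$ is essentially $\mathfrak{A}_n P_n h-P_n\mathfrak{A}h$ plus an $o(1)$ remainder coming from the Taylor expansion $\mathcal{T}_{k(n)}=I+k(n)\mathfrak{A}+o(k(n))$ on the generator's domain. The task is thus to show $\|\mathfrak{A}_n P_n h-P_n\mathfrak{A}h\|_{B_n}\to 0$ uniformly as $h$ ranges over the compact set $\{\mathcal{T}_s f:s\in[0,t]\}$, which is handled by a standard three-$\varepsilon$ argument combining compactness, the definition of $\mathfrak{A}$ on a core, and the Trotter--Kato-style extension to $\mathrm{Dom}(\mathfrak{A})$. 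Summing the $m\sim t/k(n)$ single-step errors gives a total of order $t\cdot o(1)$, completing the convergence.
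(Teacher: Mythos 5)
The paper does not itself prove Proposition~\ref{Trotter's approximation}; it cites \cite{Trotter, Kurtz}, and the refined Theorem~\ref{Trotter-refine} (proved in \cite{Namba}) reveals the strategy those references take and that the paper relies on: introduce the Euler semigroup $S^{(n)}_t := e^{t\mathfrak{A}_n}$ on $B_n$, obtain $\|S^{(n)}_t P_n f - P_n\mathcal{T}_t f\|_{B_n}\to 0$ from resolvent convergence (Trotter--Kato), and bound $T_n^{[t/k(n)]} - S^{(n)}_{[t/k(n)]k(n)}$ by Chernoff's contraction estimate applied to the approximants $h_n := (\lambda-\mathfrak{A}_n)^{-1}P_n g$ rather than to $P_n h$. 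Your first half --- the uniform resolvent bound $\|(\lambda-\mathfrak{A}_n)^{-1}\|\le\lambda^{-1}$ via Neumann series, dissipativity of $\mathfrak{A}$ transported to $E$ through the approximating scheme, Lumer--Phillips, and the resolvent-convergence identity --- is correct and coincides with the standard preliminaries.

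The gap is in the telescoping step. You reduce to showing $\|\mathfrak{A}_n P_n h - P_n\mathfrak{A}h\|_{B_n}\to 0$ uniformly over $\{\mathcal{T}_s f : s\in[0,t]\}$, but this does not follow from the hypotheses even pointwise. The assumption that $\mathfrak{A}$ is the closure of $\lim\mathfrak{A}_n$ (Kurtz's extended limit) asserts only the \emph{existence} of some $h_n\in B_n$ with $\|h_n - P_nh\|_{B_n}\to 0$ and $\|\mathfrak{A}_n h_n - P_n\mathfrak{A}h\|_{B_n}\to 0$; it does not license taking $h_n = P_n h$. Nor does your resolvent-convergence step close this: with $h_n := (\lambda-\mathfrak{A}_n)^{-1}P_n(\lambda-\mathfrak{A})h$ one has
\begin{equation*}
\mathfrak{A}_n P_n h = \mathfrak{A}_n h_n + \mathfrak{A}_n\big(P_n h - h_n\big),
\end{equation*}
where the first term tends to $P_n\mathfrak{A}h$, but $\|\mathfrak{A}_n\|_{B_n\to B_n} = O(k(n)^{-1})$ blows up while $\|P_n h - h_n\|_{B_n}\to 0$ without a rate, so the cross term is uncontrolled; the ``three-$\varepsilon$'' uniformity you invoke therefore has nothing to stand on. The standard repair is precisely the detour through $S^{(n)}_t$: Chernoff's inequality $\|T_n^m h_n - e^{m(T_n-I)}h_n\|_{B_n}\le\sqrt{m}\,\|(T_n-I)h_n\|_{B_n} = \sqrt{m}\,k(n)\|\mathfrak{A}_n h_n\|_{B_n} = O(\sqrt{t\,k(n)})\to 0$ applied to $h_n$, together with the Trotter--Kato theorem fed by the resolvent convergence you already established, yields the conclusion.
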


Afterwards, Campiti and Tacelli  \cite[Theorem 1.1]{CT}
determined a rate of convergence of the 
Trotter's theorem 
in the case where $B_n \equiv E$ for $n \in \mathbb{N}$. 
The following theorem is a certain refinement of Proposition \ref{Trotter's approximation}
and is also regarded as an extension of \cite[Theorem 1.1]{CT}.

\begin{tm}[{\cite[Theorem 1.3]{Namba}}]
\label{Trotter-refine}
Let $T_n, \, n \in \mathbb{N},$ be a bounded linear operator on $B_n$ satisfying 
\begin{equation}\label{cond1}
\|T_n^k\| \le Me^{\omega k/n}, \qquad n, k \in \mathbb{N},
\end{equation}
for some $M \ge 1$ and $\omega \ge 0$. 
Suppose that $\mathfrak{D}$ is a dense subspace of $E$
and $\mathfrak{A} : (\mathfrak{D} \subset)\mathrm{Dom}(\mathfrak{A}) \to E$
is a linear operator. 
If $\mathrm{Ran}(\lambda-\mathfrak{A})$ is dense in $E$ for some $\lambda>\omega$,
then the closure of $(\mathfrak{A}, \mathfrak{D})$
 generates a $C_0$-semigroup
$(\mathcal{T}_t)_{t \geq 0}$ on $E$ satisfying $\|\mathcal{T}_t\| \le Me^{\omega t}$
for $t \ge 0$. 
Moreover, suppose that 
\begin{equation}\label{CT1}
\|n(T_n-I)P_nf\|_{B_n} \leq \varphi_n(f), \qquad f \in \mathfrak{D},
\end{equation} 
and the following estimate of the Voronovskaja-type formula holds:
\begin{equation}\label{CT2}
\|n(T_n -I)P_n f - P_n\mathfrak{A} f\|_{B_n} \leq \psi_n(f), \qquad f \in \mathfrak{D},
\end{equation} 
where  $\varphi_n, \psi_n : \mathfrak{D} \to [0, \infty)$
are semi-norms on the subspace $\mathfrak{D}$ 
with $\lim_{n \to \infty}\psi_n(f)=0$ for $f \in \mathfrak{D}$. 
Then, for every $t \geq 0$ and for every increasing 
$\{k(n)\}_{n=1}^\infty$ of positive integers, we have
\begin{align}\label{rate of convergence}
&\|T_n^{k(n)}P_nf - P_n\mathcal{T}_t f\|_{B_n} \nn\\
&\le M\exp(2\omega e^{\omega/n}k(n)/n)
\Big(\frac{\omega}{n}\frac{k(n)}{n} +\frac{\sqrt{k(n)}}{n} \Big)\varphi_n(f)\nn\\
&\hspace{1cm}
+M\exp(\omega t_ne^{\omega/n})\Big|\frac{k(n)}{n}-t\Big|\varphi_n(f)\nn\\
&\hspace{1cm}+M\exp(\omega te^{\omega/n})\int_0^t 
\exp(-\omega se^{\omega/n})\psi_n(\mathcal{T}_s f) \, ds
\end{align}
for all 
$f \in \mathfrak{D}_0:=\{g \in \mathfrak{D} \, 
| \, \mathcal{T}_tg \in \mathfrak{D}, \, t \ge 0\}$, 
where we put $t_n:=\max\{t, k(n)/n\}$. 
\end{tm}

See \cite{Namba} for some typical applications as well as its complete proof.  
We here give a 
proof of the following Proposition \ref{BE-refine} 
by a simple application of Theorem 
\ref{Trotter-refine}, though the function space should be supposed to be 
$$
\mathfrak{D}:=C_\infty^\infty(G)= \bigcap_{k=1}^\infty \Bigg\{ f \in C_\infty(G) \, : \, 
\lim_{|x| \to \infty} \mathfrak{a}^I f(x)=0, \, 
I \in \mathcal{I}, \, d(I)=k\Bigg\},
$$
In this sense, Proposition \ref{BE-refine}  
is weaker than Corollary \ref{BE-bound}. 
Nevertheless, it is worth mentioning here since it is proved by using
not any probabilistic techniques but functional analytic ones. 

\begin{pr}
For every $f \in \mathfrak{D}=C_\infty^\infty(G)$, we have 
\begin{equation}\label{BE-refine}
\|\mathcal{L}^{[nt]} \mathcal{P}^H_{n^{-1/2}}f
- \mathcal{P}^H_{n^{-1/2}}\e^{t\mathcal{A}(\Phi_0)}f\|_\infty 
\le \frac{C}{n^{1/2}}, \qquad n \in \mathbb{N},
\end{equation}
for some positive constant $C=C(G, f, p, \Phi_0, t)>0$. 
\end{pr}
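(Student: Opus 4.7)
The plan is to verify the hypotheses of Theorem~\ref{Trotter-refine} with $E = C_\infty(G)$, $B_n = C_\infty(X)$ (both sup-normed), $P_n := \mathcal{P}^H_{n^{-1/2}}$, $T_n := \mathcal{L}$, $k(n) := [nt]$, and $(\mathfrak{A}, D) := (\mathcal{A}, C_c^\infty(G))$. Since $\mathcal{L}$ is Markov, \eqref{cond1} holds with $M=1$, $\omega=0$. Under \textbf{(A1)} the generator $\mathcal{A}$ is sub-elliptic, so its closure generates a Feller $C_0$-semigroup on $C_\infty(G)$ and $(\lambda-\mathcal{A})C_c^\infty(G)$ is dense for every $\lambda>0$; the approximation property $\|P_n f\|_{B_n}\to\|f\|_E$ reduces to asymptotic equidistribution of $\tau_{n^{-1/2}}(\Phi_0(V))$ in $G$.

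For hypothesis \eqref{CT1}, a second-order stratified Taylor expansion (Lemma~\ref{Taylor}) of $f\!\bigl(\tau_{n^{-1/2}}(\Phi_0(x))\bullet\tau_{n^{-1/2}}(d\Phi_0(e))\bigr)$ combined with Lemma~\ref{estimate of f} yields $\|n(\mathcal{L}-I)\mathcal{P}^H_{n^{-1/2}}f\|_\infty \le C\|D^{2r}f\|_\infty$, once modified harmonicity and \textbf{(A2)} are used to kill the first-order contributions. Thus $\varphi_n(f):=C\|D^{2r}f\|_\infty$ works.

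The main obstacle is the Voronovskaja-type estimate \eqref{CT2} at rate $\psi_n(f) = O(n^{-1/2})$. A third-order expansion gives
\[
n(\mathcal{L}-I)\mathcal{P}^H_{n^{-1/2}} f(x) - \mathcal{P}^H_{n^{-1/2}}\mathcal{A} f(x) = \sum_{d(I)=2}\bigl(F^I_{\Phi_0}(x)-m^I_{\Phi_0}\bigr)S^I f\bigl(\tau_{n^{-1/2}}(\Phi_0(x))\bigr) + O(n^{-1/2}),
\]
whose leading sum is only $O(1)$ pointwise, so the naive setup fails. To absorb it, I introduce a corrector: for each $I$ with $d(I)=2$, solve the Poisson equation $(I-\mathcal{L})\Psi^I = F^I_{\Phi_0}-m^I_{\Phi_0}$ on $V_0$; solvability holds because $\sum_x m(x)(F^I_{\Phi_0}(x)-m^I_{\Phi_0})=0$ makes the right-hand side orthogonal to $\psi_0$ in the Perron--Frobenius decomposition. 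Extending $\Psi^I$ to $V$ by $\Gamma$-invariance, set
\[
\widetilde{P}_n f(x) := \mathcal{P}^H_{n^{-1/2}} f(x) + \frac{1}{n}\sum_{d(I)=2}\Psi^I(x)\,S^I f\bigl(\tau_{n^{-1/2}}(\Phi_0(x))\bigr).
\]
Combining the Poisson identity for $\Psi^I$ with the Lipschitz bound on $y\mapsto S^I f(\tau_{n^{-1/2}}(y))$ (again via Lemma~\ref{Taylor}) yields
\[
\bigl\|n(\mathcal{L}-I)\widetilde{P}_n f - \widetilde{P}_n\mathcal{A} f\bigr\|_\infty = O(n^{-1/2})\|D^{3r}f\|_\infty, \qquad \bigl\|\widetilde{P}_n f - \mathcal{P}^H_{n^{-1/2}}f\bigr\|_\infty = O(n^{-1})\|D^{2r}f\|_\infty,
\]
with both right-hand sides finite for $f\in C_c^\infty(G)$.

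Theorem~\ref{Trotter-refine} applied to $(\mathcal{L},\widetilde{P}_n,[nt])$ then yields $\|\mathcal{L}^{[nt]}\widetilde{P}_n f - \widetilde{P}_n e^{t\mathcal{A}}f\|_\infty = O(n^{-1/2})$, and the bound \eqref{BE-refine} follows by the triangle inequality, using $\|\mathcal{L}^{[nt]}\|\le 1$ and the uniform $O(n^{-1})$ control of $\widetilde{P}_n - \mathcal{P}^H_{n^{-1/2}}$ applied to both $f$ and $e^{t\mathcal{A}}f$; derivative bounds for $e^{t\mathcal{A}}f$ are inherited from those of $f$ by sub-ellipticity of $\mathcal{A}$.
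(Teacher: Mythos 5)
Your framework coincides with the paper's: the same $(E, B_n, P_n, T_n, D, \mathfrak{A})$, the same invocation of Theorem~\ref{Trotter-refine} with $k(n)=[nt]$, $M=1$, $\omega=0$. The genuine divergence is in how the Voronovskaja-type estimate \eqref{CT2} is verified at rate $O(n^{-1/2})$. The paper's proof, after a second-order Taylor expansion, writes the $d(I)=2$ terms of $n(\mathcal{L}-I)\mathcal{P}^H_{n^{-1/2}}f(x)$ with coefficient $m_{\Phi_0}^I$ and so identifies them with $\mathcal{P}^H_{n^{-1/2}}\mathcal{A}f(x)$, leaving only the cubic remainder. But the Taylor expansion actually produces the vertex-dependent coefficient $F_{\Phi_0}^I(x)=\sum_{e\in E_x}p(e)\bigl(d\Phi_0(e)\bigr)^I$, and modified harmonicity together with \textbf{(A2)} forces $F_{\Phi_0}^I\equiv 0$ only when $d(I)=1$, not when $d(I)=2$. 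Thus $\sum_{d(I)=2}\bigl(F_{\Phi_0}^I(x)-m_{\Phi_0}^I\bigr)\widehat{S}^If\bigl(\tau_{n^{-1/2}}(\Phi_0(x))\bigr)$ is a zero-mean $O(1)$ fluctuation, and the Voronovskaja bound with the unadorned $\mathcal{P}^H_{n^{-1/2}}$ does not close at order $n^{-1/2}$. You correctly identify this and absorb the fluctuation by solving the Poisson equation on $V_0$ and perturbing the approximation operator by an $O(n^{-1})$ corrector; this is a genuine and apparently necessary repair of the step, which the paper's argument does not address (unless it tacitly assumes a second-order harmonicity condition it never states). Your solvability justification via orthogonality to $\psi_0$ is also right.

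There is, however, a technical gap in your version as well. Theorem~\ref{Trotter-refine} requires $P_n : E \to B_n$ to be a bounded linear operator on all of $E=C_\infty(G)$ (the resolvent comparison $(\lambda-\mathfrak{A}_n)^{-1}P_n g - P_n(\lambda-\mathfrak{A})^{-1}g$ in its proof applies $P_n$ to elements $g$ of $E$, not just of $D$), but $\widetilde{P}_n$ involves $\widehat{S}^I f$ with $d(I)=2$ and hence is only defined on $C^2$ functions. You need either to verify that the proof of Theorem~\ref{Trotter-refine} survives with $P_n$ defined only on a dense, $(\lambda-\mathfrak{A})^{-1}$- and $\mathcal{T}_t$-invariant subspace, or to bypass the theorem and run the telescoping estimate $\mathcal{L}^{[nt]}\widetilde{P}_n f-\widetilde{P}_n e^{t\mathcal{A}}f=\sum_k\mathcal{L}^{k-1}(\mathcal{L}\widetilde{P}_n-\widetilde{P}_n e^{\mathcal{A}/n})e^{([nt]-k)\mathcal{A}/n}f$ directly. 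Minor points: the right Taylor expansion produces $\widehat{S}^I$, so your corrector should carry $\widehat{S}^I$ rather than $S^I$; and the bound for $\varphi_n$ should involve $\|D^{3r}f\|_\infty$ (from the cubic remainder) rather than $\|D^{2r}f\|_\infty$—immaterial for $f\in C_c^\infty(G)$ but worth recording.
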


\begin{proof}[Proof]
Let us take $(B_n, \|\cdot\|_{B_n})=(C_\infty(X), \|\cdot\|_\infty)$ for $n \in \mathbb{N}$ 
and $(E, \|\cdot\|_{E})=(C_\infty(G), \|\cdot\|_\infty)$. Then 
$\{(C_\infty(X), \mathcal{P}^H_{n^{-1/2}})\}_{n=1}^\infty$ approximates the Banach space $C_\infty(G)$. 
We also take 
$\mathfrak{D}=C_\infty^\infty(G)$, which is a dense subspace of $C_\infty(G)$. 
We define a sequence of bounded linear operators $\{T_n\}_{n=1}^\infty$
on $C_\infty(X)$ by 
$T_n =\mathcal{L} $ for $n \in \mathbb{N}$. 
Note that  $\|\LL^n\| \le 1$ for 
$n \in \mathbb{N}$, that is, $M=1$ and $\omega=0$. 
Moreover, we take $\mathfrak{A}=\A(\Phi_0)$, 
which satisfies that $\mathrm{Ran}(\lambda - \A(\Phi_0))$ is dense in $C_\infty(G)$
for some $\lambda>0$ (cf.\,\cite[page 304]{Rob}). 

We now show that 
\begin{equation}\label{phi}
\|n(\mathcal{L}-I)\mathcal{P}^H_{n^{-1/2}} f \|_\infty \leq \varphi_n(f)=
\|\A(\Phi_0) f\|_\infty + 
\frac{C}{n^{1/2}} \Big(\max_{e \in E_0}|d\Phi_0(\widetilde{e})|^3\Big)\|D^3f\|_{\infty}
\end{equation}
and 
\begin{equation}\label{psi}
\|n(\mathcal{L}-I)\mathcal{P}^H_{n^{-1/2}} f - \mathcal{P}^H_{n^{-1/2}}\A(\Phi_0) f\|_\infty \leq\psi_n(f)=
\frac{C}{n^{1/2}} \Big(\max_{e \in E_0}|d\Phi_0(\widetilde{e})|^3\Big)\|D^3f\|_{\infty}
\end{equation}
for every $f \in \mathfrak{D}$. 
Indeed, we apply the right stratified Taylor formula up to order 2 to 
the function $f$ at $\tau_{n^{-1/2}}(\Phi_0(x))$.
Then, it follows from
$m_{\Phi_0}^I=0$ for $I \in \mathcal{I}$ with $d(I)=1$ (Proposition \ref{phi-moment}) that  
$$
\begin{aligned}
n(\LL -I)\mathcal{P}^H_{n^{-1/2}}f(x) &= n^{1/2}\sum_{d(I)=1}\widehat{S}^I f\Big(\tau_{n^{-1/2}}
\big(\Phi_0(x)\big)\Big) m_{\Phi_0}^I \\
&\hspace{1cm}+
\sum_{d(I)=2}\widehat{S}^I f\Big(\tau_{n^{-1/2}}
\big(\Phi_0(x)\big)\Big) m_{\Phi_0}^I  +n  \mathbb{E}^{x, 1}[R_3^f]\\
&=\sum_{d(I)=2}\widehat{S}^If\Big(\tau_{n^{-1/2}}
\big(\Phi_0(x)\big)\Big)m_{\Phi_0}^I + n\mathbb{E}^{x, 1}[R_3^f], \qquad x \in V,
\end{aligned}
$$
where the remainder term $R_3^f$ satisfies 
$$
\begin{aligned}
\mathbb{E}^{x, 1}[|R^f_{3}|] &\le 
\frac{C}{n^{3/2}}\mathbb{E}^{x, 1}\Bigg[|d\Phi_0(e)|^{3}
\sup\Bigg\{ \Big|\mathfrak{a}^I f\Big(\tau_{n^{-1/2}}(\Phi_0(x)) \bullet z\Big)\Big| \, \nn\\
&\hspace{1cm}: \, d(I)=3, \, |z| \le \frac{b^{3}}{n^{1/2}}|d\Phi_0(e)|\Bigg\}\Bigg]\\
&\le \frac{C}{n^{3/2}} \Big(\max_{e \in E_0}|d\Phi_0(\widetilde{e})|^3\Big)\|D^3f\|_{\infty}.
\end{aligned}
$$
for some positive constants $C>0$. 
We observe that 
$$
\begin{aligned}
&\sum_{d(I)=2}\widehat{S}^I f\Big(\tau_{n^{-1/2}}
\big(\Phi_0(x)\big)\Big) m_{\Phi_0}^I\\
&=\Big(\frac{1}{2}\sum_{i, j=1}^{d_1} \sigma_i(\Phi_0)\sigma_j(\Phi_0)\mathfrak{a}_i\mathfrak{a}_j 
+\sum_{i=d_1+1}^{d_1+d_2} \beta(\Phi_0)|_{\mathfrak{a}_i}\mathfrak{a}_i^2\Big)f\Big(\tau_{n^{-1/2}}
\big(\Phi_0(x)\big)\Big)\\
&=\mathcal{P}^H_{n^{-1/2}}\A(\Phi_0) f(x), \qquad x \in V.
\end{aligned}$$
Hence, we obtain
$$
\begin{aligned}
\|n(\LL -I)\mathcal{P}^H_{n^{-1/2}}f\|_\infty
& \le \|\A(\Phi_0) f\|_\infty + 
\frac{C}{n^{1/2}} \Big(\max_{e \in E_0}|d\Phi_0(\widetilde{e})|^3\Big)\|D^3f\|_{\infty},\\
\|n(\LL -I)\mathcal{P}^H_{n^{-1/2}}f - \mathcal{P}^H_{n^{-1/2}}\A(\Phi_0) f\|_\infty
&\le \frac{C}{n^{1/2}} \Big(\max_{e \in E_0}|d\Phi_0(\widetilde{e})|^3\Big)\|D^3f\|_{\infty}.
\end{aligned}
$$
Therefore, we can apply Theorem \ref{Trotter-refine} in the case where 
$k(n)=[nt]$ for $n \in \mathbb{N}$.  
By using \eqref{phi} and \eqref{psi} 
and by noting $(\e^{t\mathcal{A}(\Phi_0)})(\mathfrak{D}) \subset \mathfrak{D}$ for $t \ge 0$,  
we obtain
$$
\begin{aligned}
&\|\LL^{[nt]}\mathcal{P}^H_{n^{-1/2}}f - \mathcal{P}^H_{n^{-1/2}}\e^{t\A(\Phi_0)}f\|_\infty\\
&\le \frac{\sqrt{[nt]}}{n}\varphi_n(f)+\Big|\frac{[nt]}{n}-t\Big|\varphi_n(f)
+\int_0^t \psi_n(\e^{s\mathcal{A}(\Phi_0)}f) \, ds\\
&\le \frac{t^{1/2}}{n^{1/2}}\varphi_n(f)+\frac{1}{n}\varphi_n(f)
+\int_0^t \psi_n(\e^{s\mathcal{A}(\Phi_0)}f) \, ds
\le \frac{C}{n^{1/2}}, \qquad t \ge 0,
\end{aligned}
$$
for some positive constant $C=C(G, f, p, \Phi_0, t)>0$. 
This means that we have established \eqref{BE-refine} for every $f \in \mathfrak{D}$. 
This completes the proof. 
\end{proof}

Before closing this section, we mention the case where the $\Gamma$-equivariant 
realization $\Phi : X \to G$ is not always modified harmonic. 
Since the proof of Theorem \ref{main} heavily depends on the modified harmonicity of 
the realization, we do not expect to establish the precise Edgeworth expansions 
for the random walks on $X$ without imposing the modified harmonicity.  
However, we now see that the Berry--Esseen type bound 
for arbitrary $\Gamma$-equivariant realization $\Phi : X \to G$
is immediately established. Namely, we obtain the following. 

\begin{tm}[Berry--Esseen type bound for $\Gamma$-equivariant realizations]
\label{BE-bound2}
Let $\Phi : X \to G$ be a $\Gamma$-equivariant realization of $X$ and 
$\mathcal{P}_{n^{-1/2}} : C_\infty(G) \to C_\infty(G), \, n \in \mathbb{N},$ 
be the approximation operator defined by 
$$
\mathcal{P}_{n^{-1/2}}f(x):=f\Big(\tau_{n^{-1/2}}\big(\Phi(x)\big)\Big), \qquad x \in V. 
$$
For every $f \in C_c^\infty(G)$ we have 
$$
\|\mathcal{L}^{[nt]} \mathcal{P}_{n^{-1/2}}f
- \mathcal{P}_{n^{-1/2}}\e^{t\mathcal{A}(\Phi_0)}f\|_\infty 
\le \frac{C}{n^{1/2}}, \qquad n \in \mathbb{N},
$$
for some positive constant $C=C(G, f, p, \Phi, \Phi_0, t)>0$. 
\end{tm}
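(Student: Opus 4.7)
The plan is to reduce Theorem \ref{BE-bound2} to the modified harmonic case (Proposition \ref{BE-refine}) by introducing a \emph{corrector} that measures the discrepancy between $\Phi$ and $\Phi_0$. Concretely, I would define $\rho : V \to G$ by $\rho(x) := \Phi_0(x)^{-1} \bullet \Phi(x)$. Because both $\Phi_0$ and $\Phi$ are $\Gamma$-equivariant, a direct computation using $\Phi_0(\gamma x) = \gamma \bullet \Phi_0(x)$ and $\Phi(\gamma x) = \gamma \bullet \Phi(x)$ yields $\rho(\gamma x) = \rho(x)$ for every $\gamma \in \Gamma$. Thus $\rho$ descends to a function on the finite quotient $V_0 = \Gamma \backslash V$, and in particular $M := \sup_{x \in V}|\rho(x)| < \infty$.

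Next I would exploit that, since $G$ is stratified (assumption {\bf (A1)}), the dilation $\tau_\ve$ is a Lie group automorphism of $G$. Hence
$$
\tau_{n^{-1/2}}\big(\Phi(x)\big) = \tau_{n^{-1/2}}\big(\Phi_0(x)\big) \bullet \tau_{n^{-1/2}}\big(\rho(x)\big), \qquad \big|\tau_{n^{-1/2}}(\rho(x))\big| \le M n^{-1/2},
$$
by the homogeneity $|\tau_\ve(y)| = \ve |y|$. I would then apply the left stratified Taylor formula (Lemma \ref{Taylor}) at order $0$ to $f$ in the variable $\tau_{n^{-1/2}}(\rho(x))$, yielding
$$
\big|\mathcal{P}_{n^{-1/2}}f(x) - \mathcal{P}^H_{n^{-1/2}}f(x)\big| \le C\big|\tau_{n^{-1/2}}(\rho(x))\big| \sup_{d(I)=1}\|\va^I f\|_\infty \le \frac{C}{n^{1/2}}\|D^1 f\|_\infty.
$$
The identical argument applied with $e^{t\A}f$ in place of $f$ gives $\|\mathcal{P}_{n^{-1/2}}e^{t\A}f - \mathcal{P}^H_{n^{-1/2}}e^{t\A}f\|_\infty \le Cn^{-1/2}$, provided the first-order derivatives of $e^{t\A}f$ remain uniformly bounded on $[0,T]$.

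The conclusion follows by writing
$$
\LL^{[nt]}\mathcal{P}_{n^{-1/2}}f - \mathcal{P}_{n^{-1/2}}e^{t\A}f = \LL^{[nt]}\big(\mathcal{P}_{n^{-1/2}}f - \mathcal{P}^H_{n^{-1/2}}f\big) + \big(\LL^{[nt]}\mathcal{P}^H_{n^{-1/2}}f - \mathcal{P}^H_{n^{-1/2}}e^{t\A}f\big) + \big(\mathcal{P}^H_{n^{-1/2}}e^{t\A}f - \mathcal{P}_{n^{-1/2}}e^{t\A}f\big),
$$
and bounding each summand by $Cn^{-1/2}$: the first using the contractivity $\|\LL\| \le 1$ together with the corrector estimate above, the second directly from Proposition \ref{BE-refine}, and the third from the corrector estimate applied to $e^{t\A}f$. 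Assembling the three terms completes the proof.

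The one genuine obstacle is the hypothesis I implicitly used in the second paragraph: that $e^{t\A}f$ belongs to a function class with uniformly bounded first-order left-invariant derivatives, even though $e^{t\A}f$ is no longer compactly supported for $f \in C_c^\infty(G)$. This requires invoking the smoothing property of the semigroup $(e^{t\A})_{t \ge 0}$ generated by the (degenerate) second-order operator $\A$ of the form \eqref{general generator}; the relevant regularity follows from H\"ormander-type hypoellipticity on the stratified group $G$ together with the explicit Gaussian convolution structure $e^{t\A}f = f * \nu_t$, which gives $\va^I e^{t\A}f = f * (\va^I \nu_t)$ with a uniform sup-norm bound on compact time intervals. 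Apart from this classical input, the argument is essentially a one-line reduction to the modified harmonic case.
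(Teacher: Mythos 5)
Your argument is correct and follows the paper's proof essentially verbatim: the same three-term triangular decomposition, the same reduction to the modified-harmonic bound, and the same observation that the discrepancy between $\Phi$ and $\Phi_0$ is $\Gamma$-invariant and hence uniformly bounded. The only cosmetic difference is that you phrase the corrector estimate via the homogeneous norm and the order-$0$ stratified Taylor formula, whereas the paper uses the Carnot--Carath\'eodory metric and Lipschitz continuity of $f$ --- these are equivalent on a stratified group --- and you are somewhat more explicit than the paper about why $e^{t\A}f$ retains uniformly bounded first-order horizontal derivatives, a point the paper simply asserts with ``$e^{t\A}f$ is also Lipschitz.''
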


\begin{proof}
We fix a reference point $x_* \in V$. Then
we may put $\Phi(x_*)=\Phi_{0}(x_*)=\bm{1}_G$ without loss of generality. 
By using the triangular inequality and Corollary \ref{BE-bound}, we have 
$$
\begin{aligned}
&\|\mathcal{L}^{[nt]} \mathcal{P}_{n^{-1/2}}f
- \mathcal{P}_{n^{-1/2}}\e^{t\mathcal{A}(\Phi_0)}f\|_\infty \\
&\le \|\mathcal{L}^{[nt]} \mathcal{P}_{n^{-1/2}}f
- \mathcal{L}^{[nt]} \mathcal{P}^H_{n^{-1/2}}f\|_\infty +
 \|\mathcal{L}^{[nt]} \mathcal{P}^H_{n^{-1/2}}f
- \mathcal{P}^H_{n^{-1/2}}\e^{t\mathcal{A}(\Phi_0)}f\|_\infty\\
&\hspace{1cm}+ \| \mathcal{P}^H_{n^{-1/2}}\e^{t\mathcal{A}(\Phi_0)}f
- \mathcal{P}_{n^{-1/2}}\e^{t\mathcal{A}(\Phi_0)}f\|_\infty\\
&\le  \|  \mathcal{P}_{n^{-1/2}}f
-  \mathcal{P}^H_{n^{-1/2}}f\|_\infty +  \| \mathcal{P}^H_{n^{-1/2}}\e^{t\mathcal{A}(\Phi_0)}f
- \mathcal{P}_{n^{-1/2}}\e^{t\mathcal{A}(\Phi_0)}f\|_\infty + \frac{C}{n^{1/2}}. 
\end{aligned}
$$
Recall that there is an intrinsic left invariant metric on $G$ 
called the {\it Carnot--Carath\'eodory metric} given by 
$$
d_{\mathrm{CC}}(g, h):=\inf \Big\{ \int_0^1 \|\dot{c}(t)\|_{\g^{(1)}} \, dt \, \Big| \, 
c(0)=g, \, c(1)=h, \, \dot{c}(t) \in \g^{(1)}_{c(t)}\Big\}, \qquad g, h \in G,
$$
where $\|\cdot\|_{\g^{(1)}}$ denotes a fixed norm on $\g^{(1)}$ 
and $\g^{(1)}_{c(t)}$ is the evaluation of $\g^{(1)}$
at $c(t)$. 
Due to $f \in C_c^\infty(G)$, we find a positive constant $C>0$ such that 
$$
\begin{aligned}
&|\mathcal{P}_{n^{-1/2}}f(x)-  \mathcal{P}^H_{n^{-1/2}}f(x)|\\
&=\Big|f\Big(\tau_{n^{-1/2}}\big(\Phi(x)\big)\Big) - f\Big(\tau_{n^{-1/2}}\big(\Phi_0(x)\big)\Big)\Big|\\
&\le Cd_{\mathrm{CC}}\Big( \tau_{n^{-1/2}}\big(\Phi(x)\big), \tau_{n^{-1/2}}\big(\Phi_0(x)\big)\Big)
=\frac{C}{n^{1/2}}d_{\mathrm{CC}}\big(\Phi(x), \Phi_0(x) \big), \qquad x \in V. 
\end{aligned}
$$
Since $d_{\mathrm{CC}}\big(\Phi(x), \Phi_0(x) \big)
=d_{\mathrm{CC}}\big(\Phi(\gamma x), \Phi_0(\gamma x) \big)$
for $x \in V$ and $\gamma \in \Gamma$, the function 
$$
x \longmapsto d_{\mathrm{CC}}\big(\Phi(x), \Phi_0(x) \big)
$$
can be regarded as a function defined on the base graph $X_0$. Therefore, we have 
$$
|\mathcal{P}_{n^{-1/2}}f(x)-  \mathcal{P}^H_{n^{-1/2}}f(x)| \le \frac{C}{n^{1/2}} 
\max_{x \in V_0}d_{\mathrm{CC}}\big(\Phi(x), \Phi_0(x) \big), \qquad x \in V. 
$$
Similarly, since $\e^{t\A(\Phi_0)}f$ is also Lipschitz, we have 
$$
| \mathcal{P}^H_{n^{-1/2}}\e^{t\mathcal{A}(\Phi_0)}f(x)
- \mathcal{P}_{n^{-1/2}}\e^{t\mathcal{A}(\Phi_0)}f(x)|_\infty \le \frac{C}{n^{1/2}} 
\max_{x \in V_0}d_{\mathrm{CC}}\big(\Phi(x), \Phi_0(x) \big), \qquad x \in V. 
$$
for some positive constant $C>0$. By putting it all together, we obtain the desired bound. 
This completes the proof. 
\end{proof}

The difference between $\Phi(x)$ and $\Phi_0(x)$
with respect to $d_{\mathrm{CC}}$
is called the {\it corrector}, which is also applied effectively 
to the proof of a functional CLT 
for non-symmetric random walks on $X$ in e.g., \cite[Theorem 2.3]{IKN}. 
We note that the terminology ``corrector'' is frequently used in the context of 
homogenization theory (cf.\,\cite{Kumagai}).


\begin{appendix}
\section{{\bf Proof of Proposition \ref{ergodic-iterate}}}

We here give a proof of Proposition \ref{ergodic-iterate} in the case $N=2$. 

\begin{proof}[Proof of Proposition \ref{ergodic-iterate}] 
Throughout the proof, 
$\la \cdot, \cdot \ra_{\ell^2(X_0)}$ and $\|\cdot\|_{\ell^2(X_0)}$ 
are abbreviated as $\la \cdot, \cdot \ra$ and $\|\cdot\|$, respectively.  
By virtue of the decomposition \eqref{decomposition}, we have
$$
\begin{aligned}
&\frac{1}{n^2}\sum_{k=0}^{n-1}\sum_{\ell=0}^k\mathcal{L}^\ell f(x)\mathcal{L}^{k+1}g(x)\\
&=\frac{1}{2}\Big(1+\frac{1}{n}\Big)\la f, m\ra
\la g, m\ra+ \frac{1}{n^2}\la f, m\ra\sum_{k=1}^{n}k\sum_{j=1}^{K_0-1}\la g, \psi_j\ra\alpha_j^{k}\phi_j(x)\\
&\hspace{1cm}+\frac{1}{n^2}\la f, m\ra\sum_{k=0}^{n-1}
\sum_{\ell=0}^k \LL^{\ell} g_{\ell_{K_0}^2(X_0)}(x)+\frac{1}{n^2}\la g, m\ra\sum_{k=0}^{n-1}\sum_{\ell=0}^k
\sum_{j=1}^{K_0-1}\la f, \psi_j\ra\alpha_j^{\ell}\phi_j(x)\\
&\hspace{1cm}+\frac{1}{n^2}\sum_{k=0}^{n-1}\sum_{\ell=0}^k
\Big(\sum_{j=1}^{K_0-1}\la f, \psi_j\ra\alpha_j^{\ell}\phi_j(x)\Big)
\Big(\sum_{j=1}^{K_0-1}\la g, \psi_j\ra\alpha_j^{k+1}\phi_j(x)\Big)\\
&\hspace{1cm}+\frac{1}{n^2}\sum_{k=0}^{n-1}\mathcal{L}^{k+1}g_{\ell^2_{K_0}(X_0)}(x)
\sum_{\ell=0}^k
\Big(\sum_{j=1}^{K_0-1}\la f, \psi_j\ra\alpha_j^{\ell}\phi_j(x)\Big)
+\frac{1}{n^2}\la g, m\ra\sum_{k=1}^{n}k
\mathcal{L}^{k}f_{\ell^2_{K_0}(X_0)}(x)\\
&\hspace{1cm}+\frac{1}{n^2}\sum_{k=0}^{n-1}
\Big(\sum_{j=1}^{K_0-1}\la g, \psi_j\ra\alpha_j^{k+1}\phi_j(x)\Big)
\sum_{\ell=0}^k \LL^{\ell}f_{\ell^2_{K_0}(X_0)}(x)\nn\\
&\hspace{1cm}
+\frac{1}{n^2}\sum_{k=0}^{n-1}\sum_{\ell=0}^k
 \LL^{\ell}f_{\ell^2_{K_0}(X_0)}(x) \LL^{k+1}g_{\ell^2_{K_0}(X_0)}(x)\\
 &=:\frac{1}{2}\Big(1+\frac{1}{n}\Big)\la f, m\ra
\la g, m\ra+I_1+I_2+I_3+I_4+I_5+I_6+I_7+I_8
\end{aligned}
$$
for $n \in\mathbb{N}$ and $x \in V_0$. 
In particular, the terms $I_1, I_3, I_4$ and $I_5$ are calculated as follows:
$$
\begin{aligned}
I_1&=-\frac{1}{n}\la f, m \ra \sum_{j=1}^{K_0-1}\frac{\alpha_j^{n+1}\la g, \psi_j\ra }{1-\alpha_j}\phi_j(x)+\frac{1}{n^2}\la f, m \ra \sum_{j=1}^{K_0-1}\frac{\alpha_j(1-\alpha_j^n)\la g, \psi_j\ra}{(1-\alpha_j)^2}\phi_j(x),\\
I_3&=\frac{1}{n}\la g, m \ra \sum_{j=1}^{K_0-1}\frac{\la f, \psi_j\ra}{1-\alpha_j}\phi_j(x)
-\frac{1}{n^2}\la g, m \ra \sum_{j=1}^{K_0-1}\frac{\alpha_j(1-\alpha_j^n)\la f, \psi_j\ra}{(1-\alpha_j)^2}\phi_j(x),\\
I_4&=\frac{1}{n^2}\sum_{i, j=1}^{K_0-1}\frac{\la f, \psi_i\ra\la g, \psi_j\ra}{1-\alpha_i}\Big(
\frac{\alpha_j(1-\alpha_j^n)}{1-\alpha_j} - \frac{\alpha_i\alpha_j(1-\alpha_i^n\alpha_j^n)}{1-\alpha_i\alpha_j}\Big)\phi_i(x)\phi_j(x),\\
I_5&=\frac{1}{n^2}\sum_{k=0}^{n-1}\mathcal{L}^{k+1}g_{\ell^2_{K_0}(X_0)}(x)
\sum_{j=1}^{K_0-1}\frac{\la f, \psi_j\ra(1-\alpha_j^{k+1})}{1-\alpha_j}\phi_j(x).
\end{aligned}
$$
We note that the Perron--Frobenius theorem implies 
that there exists some $\lambda \in (0, 1]$
such that $\big\|\LL|_{\ell^2_{K_0}(X_0)}\big\| \leq \lambda$. Hence, we see that 
\begin{align}
\Big\|\sum_{k=0}^{n-1}\LL^{k}f\Big\|
&\leq \sum_{k=0}^{n-1} \lambda^k \|f\|
\leq \frac{\|f\|}{1-\lambda}=O(1), & 
\quad\Big\|\sum_{k=1}^{n}k\LL^{k}f\Big\|
&\leq n\sum_{k=1}^{n} \lambda^k  \|f\|=O(n), \nonumber\\
\Big\|\sum_{k=0}^{n-1}\sum_{\ell=0}^k \LL^{\ell}f\Big\|
&\leq \sum_{k=0}^{n-1}\sum_{\ell=0}^k \lambda^k  \|f\|=O(n) & \label{L-ineq}
\end{align}
for $f \in \ell_{K_0}^2(X_0)$. 
We now set
\begin{align}
A[f, g]_n^{(1)}(x) &=\frac{1}{2}\la f, m \ra \la g, m \ra -\la f, m \ra \sum_{j=1}^{K_0-1}\frac{\alpha_j^{n+1}\la g, \psi_j\ra }{1-\alpha_j}\phi_j(x)\nn\\
&\hspace{1cm}
+\frac{1}{n}\la f, m\ra\sum_{k=0}^{n-1}
\sum_{\ell=0}^k \LL^{\ell} g_{\ell_{K_0}^2(X_0)}(x)\nonumber\\
&\hspace{1cm}+\la g, m \ra \sum_{j=1}^{K_0-1}\frac{\la f, \psi_j\ra}{1-\alpha_j}\phi_j(x)
+\frac{1}{n}\la g, m\ra\sum_{k=1}^{n}k
\mathcal{L}^{k}f_{\ell^2_{K_0}(X_0)}(x),\label{A1}
\end{align}
and 
\begin{align}
A[f, g]_n^{(2)}(x) &=\la f, m \ra \sum_{j=1}^{K_0-1}\frac{\alpha_j(1-\alpha_j^n)\la g, \psi_j\ra}{(1-\alpha_j)^2}\phi_j(x) -\la g, m \ra \sum_{j=1}^{K_0-1}\frac{\alpha_j(1-\alpha_j^n)\la f, \psi_j\ra}{(1-\alpha_j)^2}\phi_j(x) \nonumber\\
&\hspace{1cm}+\sum_{i, j=1}^{K_0-1}\frac{\la f, \psi_i\ra\la g, \psi_j\ra}{1-\alpha_i}\Big(
\frac{\alpha_j(1-\alpha_j^n)}{1-\alpha_j} - \frac{\alpha_i\alpha_j(1-\alpha_i^n\alpha_j^n)}{1-\alpha_i\alpha_j}\Big)\phi_i(x)\phi_j(x)\nonumber\\
&\hspace{1cm}+\sum_{k=0}^{n-1}\mathcal{L}^{k+1}g_{\ell^2_{K_0}(X_0)}(x)
\sum_{j=1}^{K_0-1}\frac{\la f, \psi_j\ra(1-\alpha_j^{k+1})}{1-\alpha_j}\phi_j(x)
\nonumber\\
&\hspace{1cm}+\sum_{k=0}^{n-1}
\Big(\sum_{j=1}^{K_0-1}\la g, \psi_j\ra\alpha_j^{k+1}\phi_j(x)\Big)
\sum_{\ell=0}^k \LL^{\ell}f_{\ell^2_{K_0}(X_0)}(x)\nonumber\\
&\hspace{1cm}+\sum_{k=0}^{n-1}\sum_{\ell=0}^k
 \LL^{\ell}f_{\ell^2_{K_0}(X_0)}(x) \LL^{k+1}g_{\ell^2_{K_0}(X_0)}(x).\label{A2}
\end{align}
By noting \eqref{L-ineq},  $|\alpha_j|\leq 1, \, j=1, 2, \dots, K_0-1$, and an inequality 
$\|fg\|^2 \le |V_0|\|f\|^2\|g\|^2$ 
for $f, g \in \ell^2(X_0)$, 
we conclude that $\big\|A[f, g]_n^{(1)}\big\|=O(1)$ and 
$\big\|A[f, g]_n^{(2)}\big\|=O(1)$ as $n \to \infty$. 
This completes the proof of Proposition \ref{ergodic-iterate}.

\end{proof}

\end{appendix}

\noindent
{\bf Acknowledgement.} 
The author would like to thank Professor Hiroshi Kawabi for 
providing valuable comments which make the present paper more readable. 
He also would like to thank an anonymous referee for 
reading his manuscript carefully and providing helpful comments. 
This work is supported by KAKENHI Grant Number 
No.\,19K23410. 




\end{document}